\definecolor{green}{RGB}{0,144,0}
\definecolor{bluegreen}{RGB}{17,100,180}
\newtheorem{theorem}{Theorem}[section]
\newtheorem*{theorem*}{Theorem}
\newtheorem{lemma}[theorem]{Lemma}
\newtheorem{corollary}[theorem]{Corollary}
\newtheorem{proposition}[theorem]{Proposition}
\newtheorem{remark}[theorem]{Remark}
\newcommand{\N}{\mathbb{N}}
\newcommand{\Z}{\mathbb{Z}}
\newcommand{\Q}{\mathbb{Q}}
\newcommand{\R}{\mathbb{R}}
\DeclareMathOperator{\HD}{\text{HD}}
\title{New gaps on the Lagrange and Markov spectra}
\author{Luke Jeffreys}
\address{School of Mathematics , University of Bristol, Fry Building, Woodland Road, Bristol BS8 1UG}
\curraddr{}
\email{luke.jeffreys@bristol.ac.uk}
\thanks{The first author is thankful for support from the Heilbronn Institute for Mathematical Research}
\author{Carlos Matheus}
\address{Centre de Math{\'e}matiques Laurent Schwartz, {\'E}cole Polytechnique, 91128 Palaiseau Cedex, France}
\curraddr{}
\email{carlos.matheus@math.cnrs.fr}
\author{Carlos Gustavo Moreira}
\address{School of Mathematical Sciences, Nankai University, Tianjin 300071, P. R. China, and IMPA, Estrada Dona Castorina 110, CEP 22460-320, Rio de Janeiro, Brazil}
\curraddr{}
\email{gugu@impa.br}
\date{}
\subjclass[2020]{Primary: 11J06. Secondary: 11A55.}
\begin{document}

\begin{abstract}
Let $L$ and $M$ denote the Lagrange and Markov spectra, respectively. It is known that $L\subset M$ and that $M\setminus L\neq\varnothing$. In this work, we exhibit new gaps of $L$ and $M$ using two methods. First, we derive such gaps by describing a new portion of $M\setminus L$ near to 3.938: this region (together with three other candidates) was found by investigating the pictures of $L$ recently produced by V. Delecroix and the last two authors with the aid of an algorithm explained in one of the appendices to this paper. As a by-product, we also get the largest known elements of $M\setminus L$ and we improve upon a lower bound on the Hausdorff dimension of $M\setminus L$ obtained by the last two authors together with M. Pollicott and P. Vytnova (heuristically, we get a new lower bound of $0.593$ on the dimension of $M\setminus L$). Secondly, we use a renormalisation idea and a thickness criterion (reminiscent from the third author's PhD thesis) to detect infinitely many maximal gaps of $M$ accumulating to Freiman's gap preceding the so-called Hall's ray  $[4.52782956616...,\infty)\subset L$. 
\end{abstract}

\maketitle


\section{Introduction}

The classical theory of Diophantine approximation is concerned with how well irrational numbers can be approximated by rational numbers. Given a positive real number $\alpha$ we define its \emph{best constant of Diophantine approximation} to be
\[L(\alpha) := \limsup_{p,q\to\infty} \frac{1}{|q(q\alpha - p)|}.\]
In a sense, $L(\alpha)$ is the largest constant so that the inequality
\[\left|\alpha - \frac{p}{q}\right| < \frac{1}{L(\alpha)q^{2}}\]
has infinitely many solutions $p,q \in \N, q\neq 0$. The \emph{Lagrange spectrum} is defined to be the set
\[L:=\{L(\alpha)\,\mid\,\alpha\in\R\setminus\Q\}.\]

Perron~\cite{Pe21} proved that if we have the continued fraction expansion
\[\alpha = [a_{0};a_{1},a_{2},\ldots] := a_{0} + \frac{1}{a_{1} + \frac{1}{a_{2} + \frac{1}{\dots}}},\]
then we have
\[L(\alpha) = \limsup_{n\to\infty}\,( [a_{n};a_{n-1},\ldots,a_{0}] + [0;a_{n+1},a_{n+2},\ldots]).\]
As such, we are also able to define the Lagrange spectrum in terms of the bi-infinite shift space $\Sigma := \{1,2,3,\ldots\}^{\Z}$. More specifically, for $(a_{i})_{i\in\Z}\in\Sigma$ we define
\[\lambda_{0}((a_{i})_{i\in\Z}) := [a_{0};a_{-1},a_{-2},\ldots] + [0;a_{1},a_{2},\ldots],\]
and, for $j\in\Z$,
\[\lambda_{j}((a_{i})_{i\in\Z}) := \lambda_{0}(\sigma^{j}((a_{i})_{i\in\Z})) = \lambda_{0}((a_{i+j})_{i\in\Z}),\]
where $\sigma:\Sigma\to\Sigma$ is the left-shift sending $(a_{i})_{i\in\Z}$ to $(a_{i+1})_{i\in\Z}$. We can now define the Lagrange spectrum to be
\[L:= \{\limsup_{j\to\infty}\lambda_{j}(\underline{a})\,\mid\,\underline{a}\in\Sigma\}.\]
Similarly, given $(a_{i})_{i\in\Z}\in\Sigma$ we define
\[m((a_{i})_{i\in\Z}) := \sup_{n\in\Z}\lambda_{n}((a_{i})_{i\in\Z}).\]
Then the \emph{Markov spectrum} is defined to be the set
\[M:= \{m(\underline{a}) \,\mid\, \underline{a}\in\Sigma\}.\]
In the sequel, we will write a sequence $(a_{i})_{i\in\Z}$ as the string $\ldots a_{-2}a_{-1}a_{0}^{*}a_{1}a_{2}\ldots$ where the asterisk denotes the 0th position. We will also use an overline to denote periodicity so that, for example, the sequence $a_{i} = (i\mod 3) + 1$ is denoted $\overline{1^{*}23} = \ldots1231231^{*}23123123\ldots$. This notation should be clear from the context as we will mostly restrict to the subshift $\{1,2,3,4\}^{\Z}$ so, in particular, all $a_{i}$ will be single digits.

Markov~\cite{Ma79,Ma80} first studied the spectra $L$ and $M$ around 1880. It is known that $L\subset M\subset\R^{+}$ with $L\cap(0,3) = M\cap(0,3)$ an explicit discrete set. In 1975, Freiman~\cite{Fr75} showed that $[\mu,\infty)\subset L\subset M$, and $(\nu,\mu)\cap M = \varnothing$ with $\nu,\mu\in M$, where
\[\nu = \lambda_{0}(\overline{323444}313134^{*}313121133\overline{313121}) = 4.52782953841\ldots\]
and
\[\mu = \lambda_{0}(\overline{121313}22344^{*}3211\overline{313121}) = 4.52782956616\ldots.\]
The ray $[\mu,\infty)$ is known as Hall's ray after earlier work of Hall~\cite{Ha47} (see also the intermediate results of Freiman-Judin~\cite{FJ66}, Hall~\cite{Ha71}, Freiman~\cite{Fr73} and Schecker~\cite{Sc77}).

Freiman~\cite{Fr68} also showed that $M\setminus L \neq \varnothing$. In fact, the second and third authors together with M. Pollicott and P. Vytnova~\cite{MMPV22} recently proved that the Hausdorff dimension $\HD(M\setminus L)$ of $M\setminus L$ satisfies
\[0.537152 < \HD(M\setminus L) <  0.796445.\]
We direct the reader to the survey~\cite{MM21} and the textbooks of Cusick-Flahive~\cite{CF89} and Lima-Matheus-Moreira-Roma{\~n}a~\cite{L+20} for more details on these spectra.

\subsection{A new portion of \boldmath{$M\setminus L$}}

Our first result finds a new portion of $M\setminus L$ and gives an improved lower bound for its Hausdorff dimension.

\begin{theorem}\label{t:main1}
The intersection of $M\setminus L$ with the interval $(3.938, 3.939)$ is non-empty. The largest known element of $M\setminus L$ is
\[m(\overline{12}331113311321231133311121211333^{*}\overline{11121211333})=3.938776241989784909...\,\,.\]
\end{theorem}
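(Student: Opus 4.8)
The plan is to prove that $v:=m(\underline{a})$, where $\underline{a}=\overline{12}\,331113311321231133311121211333^{*}\overline{11121211333}$, satisfies $v\in M\setminus L$ and $v=3.938776241989784909\ldots$; since then $v\in(3.938,3.939)$, this yields the displayed equality and the non-emptiness claim at once, and comparing with the elements of $M\setminus L$ produced in~\cite{MMPV22} shows that $v$ is the largest currently known one. The strategy follows the template used by Freiman~\cite{Fr68} (see also~\cite{CF89,MM21}) to separate the two spectra: exhibit a sequence whose Markov value is attained at a single, rigid position, with forward and backward tails so incompatible that the corresponding configuration cannot recur.

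\emph{Step 1 (the Markov value).} Since $\underline{a}$ is eventually periodic on both sides, $\lambda_{0}(\underline{a})$ is an explicit sum of two quadratic irrationals, and one computes $\lambda_{0}(\underline{a})=3.938776241989784909\ldots$. I would then show $m(\underline{a})=\lambda_{0}(\underline{a})$ with the supremum attained \emph{only} at position $0$, i.e.\ $\lambda_{n}(\underline{a})<v$ for all $n\neq0$, where $v:=\lambda_{0}(\underline{a})$. This is a finite verification via the classical rule that $[0;b_{1},b_{2},\dots]<[0;c_{1},c_{2},\dots]$ iff $(b_{1},-b_{2},b_{3},\dots)<(c_{1},-c_{2},c_{3},\dots)$ lexicographically: it suffices to bound $\lambda_{n}(\underline{a})$ for the finitely many positions $n$ whose window meets the aperiodic core $w=331113311321231133311121211333$, together with the finitely many residue classes inside the periodic tails $\overline{12}$ and $\overline{11121211333}$. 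In particular $v\in M$ and $c_{0}:=v-\sup_{n\neq0}\lambda_{n}(\underline{a})>0$.

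\emph{Step 2 (rigidity near $v$).} Next I would prove that there exist $\delta>0$ and $R(\delta)\to\infty$ as $\delta\to0$ such that any $\underline{c}\in\Sigma$ with $m(\underline{c})\le v$ and $\lambda_{0}(\underline{c})>v-\delta$ agrees with $\underline{a}$, or with its reversal $\widetilde{\underline{a}}$ (defined by $\widetilde a_{i}=a_{-i}$; note $\lambda_{0}(\widetilde{\underline{a}})=\lambda_{0}(\underline{a})$ and $m(\widetilde{\underline{a}})=m(\underline{a})$), on the window $[-R(\delta),R(\delta)]$. Here the constraint $m(\underline{c})\le v$ caps every $\lambda_{n}(\underline{c})$, and the lexicographic rule shows that any admissible deviation from $\underline{a}$ inside a bounded window forces $\lambda_{0}(\underline{c})$ below $v$ by a definite amount; letting $\delta\to0$ gives $\{\underline{c}:m(\underline{c})\le v,\ \lambda_{0}(\underline{c})=v\}=\{\underline{a},\widetilde{\underline{a}}\}$. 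Now suppose $v=\limsup_{j\to\infty}\lambda_{j}(\underline{b})$ for some $\underline{b}\in\Sigma$; choose $j_{k}\to\infty$ with $\lambda_{j_{k}}(\underline{b})\to v$ and, along a subsequence, $\sigma^{j_{k}}\underline{b}\to\underline{c}$. By continuity of $\lambda_{0}$ on $\Sigma$, $\lambda_{0}(\underline{c})=v$ while $\lambda_{n}(\underline{c})=\lim_{k}\lambda_{j_{k}+n}(\underline{b})\le\limsup_{j}\lambda_{j}(\underline{b})=v$ for all $n$, so $\underline{c}\in\{\underline{a},\widetilde{\underline{a}}\}$; say $\underline{c}=\underline{a}$. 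Consequently $\underline{b}$ contains occurrences of $w$ at positions $q_{k}\to+\infty$, each flanked by an $\underline{a}$-matching window of length $\to\infty$; in particular immediately to the right of each such occurrence the word $\overline{11121211333}$ repeats over a long stretch, immediately to the left of the next one the word $\overline{12}$ repeats over a long stretch, and these two stretches are disjoint (a long word cannot have both period $11$ and period $2$).

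\emph{Step 3 (the obstruction and the main difficulty).} The crux is a \emph{no-chaining lemma}: there is $\varepsilon_{0}>0$ such that every bi-infinite sequence in which a sufficiently long block of $\overline{11121211333}$ is followed, after any gap, by a sufficiently long block of $\overline{12}$ has some position inside the transition region with $\lambda$-value at least $v+\varepsilon_{0}$. Granting this, the transition regions of $\underline{b}$ between $q_{k}$ and $q_{k+1}$ yield positions $r_{k}\ge q_{k}\to+\infty$ with $\lambda_{r_{k}}(\underline{b})\ge v+\varepsilon_{0}$, so $\limsup_{j}\lambda_{j}(\underline{b})\ge v+\varepsilon_{0}>v$, a contradiction; the case $\underline{c}=\widetilde{\underline{a}}$ is symmetric. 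Hence $v\notin L$, and with Step 1 we obtain $v=m(\underline{a})\in M\setminus L$; since $3.938<v<3.939$ and $v$ exceeds every element of $M\setminus L$ recorded in~\cite{MMPV22}, both remaining assertions follow. The no-chaining lemma is the hard part: it is a delicate continued-fraction estimate asserting that one genuinely cannot splice the incompatible backward tail $\overline{12}$ and forward tail $\overline{11121211333}$ of $\underline{a}$ without overshooting $v$ somewhere in between — the concrete incarnation, for this configuration, of the combinatorial obstruction underlying all known proofs that $M\setminus L\neq\varnothing$.
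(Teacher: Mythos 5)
Your overall strategy is sound and is essentially the classical template for separating $M$ from $L$: pin down the value, prove a rigidity statement for sequences realising it, and then show that the two incompatible periodic tails of the extremal sequence cannot be spliced repeatedly without overshooting. Your compactness argument in Step 2 (passing to a limit $\underline{c}$ of shifts with $\lambda_0(\underline{c})=v$ and $m(\underline{c})\le v$) and the deduction in Step 3 are logically correct as far as they go. However, there is a genuine gap: the two claims on which everything rests --- the rigidity statement of Step 2 and the ``no-chaining lemma'' of Step 3 --- are asserted, not proved, and in this subject neither is routine. The assertion that ``the lexicographic rule shows that any admissible deviation from $\underline{a}$ inside a bounded window forces $\lambda_{0}(\underline{c})$ below $v$'' glosses over the entire difficulty: deviations interact, the constraint is on \emph{all} $\lambda_{n}$ simultaneously, and establishing that $\underline{a}$ (up to transposition) is the unique maximiser of $\lambda_0$ subject to $m\le v$ requires an exhaustive alternation of left/right extensions with explicit continued-fraction bounds --- in the paper this occupies roughly thirty lemmas (Lemmas~\ref{l:M-L1}--\ref{l:M-L31} and Propositions~\ref{p:M-L1}--\ref{p:M-L3}). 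Likewise your no-chaining lemma is true, but it is exactly the contrapositive of the self-replication statement (Corollary~\ref{c:M-L2}): once the word $1121211333^*1112121133$ occurs with all nearby $\lambda$-values below $3.938776241990046$, the word is forced to recur $11$ places to the right, so a transition to the $\overline{12}$ tail forces some $\lambda_r\ge 3.938776241990046=v+\varepsilon_0$ with $\varepsilon_0\approx 2.6\times10^{-13}$. You correctly identify this as ``the hard part,'' but without it (and without the companion verification that the margin $\varepsilon_0$ is positive and uniform in the position of the transition) the proof does not close.

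It is also worth noting that the paper routes around your Steps 2--3 in a cleaner way. Rather than arguing by compactness about an arbitrary $\underline{b}$ with $\limsup_j\lambda_j(\underline{b})=v$, it exhibits an explicit interval $J=(j_0,j_1)$ containing $v$ and shows that no \emph{periodic} sequence has Markov value in $J$; since $L$ is the closure of the set of Lagrange values of periodic sequences, $J$ is a maximal gap of $L$, and then any element of $M\cap J$ (such as $v=m_4$, and in fact a whole Cantor set of values pinned down by Proposition~\ref{p:M-L3}) lies in $M\setminus L$. This avoids handling $\limsup$'s directly and buys strictly more than your single point $v$: it is what feeds the Hausdorff-dimension lower bound in the subsequent corollary. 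If you want to keep your architecture, you should at minimum state the rigidity and no-chaining lemmas precisely (with explicit windows and thresholds) and indicate how the finite case analysis establishing them would be organised.
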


\begin{remark} Our proof of this result yields that the local dimension of $M\setminus L$ near 3.938 coincides with the dimension of a dynamically defined Cantor set which is richer than the Cantor set $\Omega$ considered in \cite[\S 4.6.5]{MMPV22}. In particular, this improves the lower bound on $\HD(M\setminus L)$ and, in fact, a heuristic computation (based on the so-called Jenkinson--Pollicott method) indicates that $\HD(M\setminus L)>0.593$: see the next section. 
\end{remark}

The proof of this result is contained in Section~\ref{s:M-L}. We also, in Appendix~\ref{app:M-L}, give some additional newly discovered portions of $M\setminus L$. We do not give the proof of these claims as they do not lead to significantly better estimates of the Hausdorff dimension of $M\setminus L$.

\subsection{New maximal gaps of \boldmath{$M$}}

Our second result concerns maximal gaps in the Markov spectrum $M$. Recall that Freiman proved that the gap $(\nu,\mu)$ is a maximal gap of $M$. We find infinitely many new maximal gaps of $M$ accumulating to Freiman's gap. Specifically, we prove the following.

\begin{theorem}\label{t:main2} There is a sequence $(\alpha_n,\beta_n)$ of maximal gaps of $M$ such that $\lim\limits_{n\to\infty}\beta_n = \nu$. 
\end{theorem}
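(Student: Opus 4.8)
The plan is to exploit the combinatorial description of $M$ in a left neighbourhood of $\nu$. First, I would recall the classical fact (going back to Freiman~\cite{Fr75} and laid out in Cusick--Flahive~\cite{CF89}) that there is a value $\nu' < \nu$ such that any $\underline a \in \Sigma$ with $m(\underline a) \in (\nu',\nu]$ has all of its entries in $\{1,2,3,4\}$, and moreover that the block of $\underline a$ around any position whose value is close to $m(\underline a)$ must be a sub-word of an explicit finite list of admissible words. Consequently the portion $M \cap (\nu',\nu]$ is the image of a subshift of finite type $\Sigma' \subset \{1,2,3,4\}^{\Z}$ under the continuous map $\underline a \mapsto m(\underline a)$, and Theorem~\ref{t:main2} reduces to exhibiting infinitely many maximal gaps of the closed set $X := m(\Sigma')$ accumulating at $\nu$ from the left.

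Second, I would set up the renormalisation. The sequence realising $\nu$ has the shape $\overline{w_-}\, u\, \overline{w_+}$ with $w_- = 323444$, $w_+ = 313121$ and $u$ a fixed central word containing the $0$th coordinate. I would introduce a substitution-type operator $\mathcal R$ on admissible finite words, built from $w_-$ and $w_+$, which inserts one further copy of these renormalisation words, in such a way that: (i) $\mathcal R$ maps admissible words to admissible words of $\Sigma'$; (ii) the sequences of the form $\overline{w_-}\,\mathcal R^n(v)\,\overline{w_+}$ have $m$-values tending to $\nu$ as $n\to\infty$; and (iii) in a suitably shrinking neighbourhood of each such value, $X$ is a bi-Lipschitz copy — with distortion controlled uniformly in $n$ — of its restriction to a fixed neighbourhood of a base value $\theta_0 < \nu$. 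Property (iii) should follow from the standard multiplicative identities for the M\"obius maps $x \mapsto [0;a_1,\dots,a_k,x]$ under concatenation of words, which make $X$ self-similar under $\mathcal R$ up to bounded distortion.

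Third, I would establish a single maximal gap $(\alpha_0,\beta_0)$ of $X$ near $\theta_0$. This is a finite verification: one produces two admissible words realising the endpoints and then rules out any admissible sequence with $m$-value strictly between them. To make this verification robust under renormalisation, I would phrase it through a thickness criterion of Newhouse type, in the spirit of the third author's thesis: locally near $\theta_0$ the set $X$ is modelled by sums (or differences) of regular Cantor sets coming from the left- and right-continued-fraction expansions, and a bound on the thicknesses of these Cantor sets certifies simultaneously that no point of $X$ lies in $(\alpha_0,\beta_0)$ and that $\alpha_0,\beta_0$ are accumulation points of $X$ from the relevant sides, so that the gap is maximal. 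Transporting $(\alpha_0,\beta_0)$ through the bi-Lipschitz coordinates of Step~2 and applying $\mathcal R^n$ then yields maximal gaps $(\alpha_n,\beta_n)$ of $X$, hence of $M$, with $\beta_n \to \nu$.

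The main obstacle is Step~2: proving that $\mathcal R$ really induces a self-similarity of $X$ near $\nu$ with distortion estimates that do not deteriorate exponentially in $n$. One must check that $\mathcal R$ neither merges previously disjoint cylinders of $\Sigma'$ nor creates spurious admissible words that could fill the gap, and that the relevant compositions of M\"obius maps have bounded distortion with an error decaying slowly enough (polynomially in $n$) that a single thickness estimate carried out at scale $0$ survives all iterations. Handling this requires a careful bounded-distortion analysis together with a finite but delicate check that the list of admissible words near $\nu$ is stable under $\mathcal R$; the thickness criterion is precisely the tool that upgrades these local estimates into the global conclusion that the intervals $(\alpha_n,\beta_n)$ are genuine maximal gaps of $M$.
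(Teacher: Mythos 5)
Your outline captures several ingredients the paper actually uses (the renormalisation words $323444$ and $313121$, the reduction of $M$ near $\nu$ to continued--fraction Cantor sets, bounded distortion of inverse branches of the Gauss map, and a bridge/gap ratio estimate), but it misses the essential structural point, and the step where you ``transport a single gap through $\mathcal R^n$'' would fail as stated. In the paper, the portion of $M$ in a small left neighbourhood of $\nu$ is covered by an \emph{arithmetic sum} $A_n+B_m$ of two Cantor sets that are rescaled copies of fixed sets $K_1$, $K_2$, with \emph{two independent} scaling parameters: $A_n=f_0\circ g^n(K_1)$ and $B_m=f_1\circ h^m(K_2)$, where $g(x)=[0;3,1,3,1,2,1+x]$ and $h(y)=[0;4,4,4,3,2,3+y]$. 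A gap in such a sum exists only when the relative scale $|R_n|/|L_m|$ of the leftmost bridges lies in a bounded window (so that the bridge of one set fits into the adjacent gap of the other, in both directions simultaneously). A single ``diagonal'' renormalisation $\mathcal R$ multiplies this ratio by the fixed factor $|g'(\alpha)|/|h'(\beta)|\neq 1$ at each step, so the geometric configuration that produces a gap at scale $0$ is \emph{not} preserved under iteration: self-similarity up to bounded distortion of each factor separately does not give self-similarity of the sum. This is why the paper needs the arithmetic input that $\log|g'(\alpha)|/\log|h'(\beta)|$ is irrational (proved by showing $(43+2\sqrt{462})^{2m}\neq(987+2\sqrt{243542})^{2n}$): it guarantees that the ratios $|g'(\alpha)|^n/|h'(\beta)|^m$ are dense in $\mathbb{R}_+$, hence that infinitely many pairs $(n,m)$ land in the favourable window. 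Your proposal contains no substitute for this step.

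A second, smaller issue: the criterion you need at scale $0$ is not a lower bound on thickness in the Newhouse sense (which would certify that the sum contains an interval, i.e.\ the \emph{absence} of gaps) but an upper bound on the bridge-to-gap ratios at the minima of $K_1$ and $K_2$ (the paper's $|R_0|/|U_0|<1$ and $|L_0|/|V_0|<1/100$), which, combined with the distortion constant $2.3$ and the choice $c=5$ for the relative scale, forces $L_m$ into the gap $U_n$ and $R_n$ into the gap $V_m$, so that $(t-A_n)\cap B_m=\varnothing$ for an open set of $t$. Finally, note that maximality of the resulting gaps is automatic from the closedness of $M$ once an open interval disjoint from $M$ is produced close to, but not containing, $\nu\in M$; no separate accumulation argument is required there.
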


In Section~\ref{s:Frei-gap}, we give a proof of Freiman's result that $(\nu,\mu)$ is a maximal gap since the contributing lemmas are used in Section~\ref{s:accum-gaps} in which we prove Theorem~\ref{t:main2} via a renormalisation idea (leading to a sort of ``recurrence on scales'') and a thickness criterion in the spirit of the discussion of \cite{Mo96}.

\subsection{Computational assistance in the investigations of \boldmath{$M\setminus L$}}

The candidate sequence giving rise to elements of $M\setminus L$ analysed in Section~\ref{s:M-L} and those discussed in the appendix were discovered with the assistance of a computer search. The code was essentially running the arguments we will give in Section~\ref{s:M-L} which are themselves similar to those given in previous work of the second and third authors concerning elements of  $M\setminus L$ near to $3.7096$~\cite{MM20}.

We now describe the ideas behind the computer search. Firstly, for a candidate finite sequence $a$ we determine the Markov value of the periodic sequence $s = \overline{a}$ determined by $a$. We then consider modifications of this sequence $s$ where we force the sequence to instead terminate by $\overline{21}$ to the right or by $\overline{12}$ to the left. We find the modification that gives the smallest increase in the corresponding Markov value. Call this modified sequence $w$. Next, we try to determine the central portions of sequences that could give rise to Markov values in the range $[m(s),m(w)+\epsilon]$, for some small (possibly negative) $\epsilon$. By searching for central portions of larger and larger length we can observe evidence for the one-sided periodicity we hope to make use of in the arguments given in Section~\ref{s:M-L}. If we see no evidence for such one-sided periodicity after searching for central portions of a reasonable length then we throw out the candidate $a$ and try for a new finite sequence. The pseudo-code describing the algorithm used to determine the central portions of candidate sequences is given in Appendix~\ref{app:alg}.

In practice the candidate finite sequences $a$ are chosen to be odd length non-semi-symmetric words, where a word is semi-symmetric if it is a palindrome or a concatenation of two palindromes. We direct the reader to~\cite[Subsection 1.3]{MM20} for a discussion of why odd length non-semi-symmetric words are natural candidates for finding elements of $M\setminus L$.


\section{A new portion of $M\setminus L$ near $3.938$}\label{s:M-L}

We consider the word of odd length $11121211333$. Note that it is non-semi-symmetric (in the sense of Flahive), i.e., it is not a palindrome nor a concatenation of two palindromes. 

The Markov value of the associated periodic sequence is  
$$\lambda_0(\overline{11121211333^*}) = 3.93877624198\textcolor{red}{10}28026\dots$$ 

Generally speaking, our goal below is to show that a portion of $M\setminus L$ occurs near 
$$\lambda_0(\overline{12}12121133311121211333^*\overline{11121211333}) = 3.93877624198\textcolor{red}{11}39302\dots$$

In the sequel, we shall study a sequence $(\dots, x_{-m}, \dots, x_{-1}, x_0^*, x_1, \dots, x_n, \dots)\in\{1,2,3\}^{\mathbb{Z}}$ with a Markov value $m(x)=\lambda_0(x)$ nearby $3.93877624198\textcolor{red}{11}$. 

For a finite sequence $a$, inequalities of the form $\lambda_{0}(a)>v$, say, mean that we have $\lambda_{0}(w)>v$ for all bi-infinite sequence $w$ that are obtained by extending $a$ on both sides.

\subsection{Local uniqueness} 

Note that $x_0=3$. Moreover, the possible vicinities of $x_0^*$ (up to transposition) are $13^*1$, $13^*2$, $13^*3$, $23^*2$, $23^*3$, $33^*3$. 

\begin{lemma}\phantomsection\label{l:M-L1}
\begin{itemize}
\item[(i)] $\lambda_0(13^*1)>4.11$
\item[(ii)] $\lambda_0(33^*3)\leq\lambda_0(33^*2)\leq\lambda_0(23^*2)<3.884$
\end{itemize}
\end{lemma}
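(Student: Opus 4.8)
The plan is to estimate the two quantities $\lambda_0(13^*1)$ and $\lambda_0(23^*2)$ directly from the defining formula
\[
\lambda_0(\dots x_{-1}x_0^*x_1\dots) = [x_0;x_{-1},x_{-2},\dots] + [0;x_1,x_2,\dots],
\]
using only the monotonicity properties of continued fractions together with the fact that all entries lie in $\{1,2,3\}$. Recall the standard facts: for continued fractions with bounded partial quotients, truncating at an even-indexed convergent gives a lower bound and at an odd-indexed convergent gives an upper bound, and increasing (resp.\ decreasing) an entry at an even position increases (resp.\ decreases) the value, with the opposite behaviour at odd positions. Concretely $[0;x_1,x_2,\dots]$ with $x_i\in\{1,2,3\}$ always lies in the interval $[\,[0;\overline{1,3}],[0;\overline{3,1}]\,]$ once one fixes the first few entries; and similarly $[x_0;x_{-1},\dots]$ ranges over a controlled interval once the first few entries to the left are fixed.

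For part (i), with $x_0=3$ and the vicinity $13^*1$, I would write
\[
\lambda_0(13^*1) = [3;1,x_{-2},x_{-3},\dots] + [0;1,x_2,x_3,\dots].
\]
Since $x_{-i},x_i\in\{1,2,3\}$, the worst case (smallest value) is obtained by pushing the tails to their extremes: $[3;1,x_{-2},\dots] \ge [3;1,\overline{1,3}]$-type bound and $[0;1,x_2,\dots]\ge [0;1,\overline{3,1}]$-type bound (choosing the correct extremal periodic tails according to the parity of the position). One computes these two explicit quadratic-surd values and checks their sum exceeds $4.11$. This is a short finite computation: $[3;1,\overline{3,1}]$ is a root of an explicit quadratic, and likewise for the other tail, and $4.11$ is comfortably below the true infimum (which is around $[3;1,\dots]+[0;1,\dots]\approx 3+\text{small}+\text{small}$, actually closer to $4.2$), so there is plenty of room.

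For part (ii), the chain $\lambda_0(33^*3)\le\lambda_0(33^*2)\le\lambda_0(23^*2)$ follows from monotonicity: changing $x_{-1}$ from $3$ to $2$ — an entry in an odd (i.e.\ first-left) position of the continued fraction $[3;x_{-1},\dots]$ — but wait, one must be careful about which direction each replacement moves the sum; the correct statement is that among all vicinities the value is largest when the neighbouring digits are smallest, because smaller $x_1$ makes $[0;x_1,\dots]$ larger and smaller $x_{-1}$ makes $[3;x_{-1},\dots]$ larger. Hence replacing a $3$-neighbour by a $2$-neighbour can only increase (or at worst not decrease, after also choosing the extremal tails) the value, giving the two inequalities; one just needs to phrase this via the standard even/odd monotonicity rules and the fact that the inequalities are claimed with the worst-case (supremal) extension on the right-hand sides. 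Finally, for the rightmost bound $\lambda_0(23^*2)<3.884$, I would again take the extremal extension: $\lambda_0(23^*2) \le [3;2,\overline{3,1}] + [0;2,\overline{3,1}]$ (with the periodic tails chosen to maximise each summand subject to entries in $\{1,2,3\}$), compute these two quadratic surds explicitly, and verify the sum is below $3.884$.

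The main obstacle — really a bookkeeping matter rather than a genuine difficulty — is getting the parity conventions right: for each of the two summands one must correctly identify, position by position, whether pushing a digit up or down increases the continued fraction, and then select the extremal bi-infinite completion accordingly. Once the correct extremal completions are pinned down, everything reduces to evaluating finitely many explicit periodic continued fractions (roots of quadratics with small integer coefficients) and comparing with $4.11$ and $3.884$; the stated bounds are loose enough that low-precision estimates suffice. The monotonicity chain in (ii) is then immediate from the same rules, since each replacement $3\mapsto 2$ at a neighbouring position moves the relevant summand in the value-increasing direction.
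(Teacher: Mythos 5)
Your approach is correct and is exactly the intended one: the paper states this lemma without proof because it reduces to the routine extremal continued-fraction estimates you describe (recalling the paper's convention that $\lambda_0(a)>v$ means the bound holds for \emph{every} bi-infinite extension of $a$, and dually for $<$), and the monotonicity chain in (ii) holds in the strong sense that every value of $[0;3,\dots]$ is below every value of $[0;2,\dots]$, etc. One caution on the numerics: the infimum in (i) is $3+2\,[0;1,\overline{1,3}]\approx 4.1165$ (the minimising tail is $\overline{1,3}$, not $\overline{3,1}$ --- the parity point you flagged --- and the value is nowhere near $4.2$), so the margin over $4.11$ is only about $0.006$; likewise the supremum in (ii) is $3+2\,[0;2,\overline{3,1}]\approx 3.88348$, a margin of only about $0.0005$ below $3.884$, so the surds must actually be evaluated rather than estimated at ``low precision.''
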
 

By the previous lemma, up to transposition, it suffices to analyse the extensions to the right of $23^*1$ and $33^*1$, i.e., $23^*11$, $23^*12$, $23^*13$, $33^*11$, $33^*12$, $33^*13$. 

\begin{lemma}\label{l:M-L2} $\lambda_0(3^*13)>\lambda_0(3^*12)>3.957$. 
\end{lemma}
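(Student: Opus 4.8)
The plan is to bound the two continued-fraction quantities $\lambda_0(3^*12)$ and $\lambda_0(3^*13)$ from below by truncating the relevant continued fraction expansions. Recall that for a bi-infinite sequence with central block $3^*ab\dots$, one has $\lambda_0 = [3; \text{left tail}] + [0; a, b, \dots]$, and since all digits lie in $\{1,2,3\}$, the left tail $[3; x_{-1}, x_{-2}, \dots]$ is at least $[3; \overline{1,3}]$ (the continued fraction with ones and threes alternating is the worst case among admissible extensions, because inserting a larger digit in an odd position only increases the value and the alternating pattern minimises). More simply, since we only want a lower bound, I would use the crude estimate that the left contribution is $> 3$ exactly, i.e. $[3;x_{-1},\dots] > 3$, and then show the right contributions already push past $3.957$; if that is too lossy I refine the left side by one or two terms.

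First I would establish the comparison $\lambda_0(3^*13) > \lambda_0(3^*12)$. Writing the right tails as $[0;1,3,\dots]$ versus $[0;1,2,\dots]$, note $[0;1,3,y] = 1/(1 + 1/(3+z))$ for the appropriate continuation $z>0$, and this is an increasing function of the second partial quotient when that quotient sits in an even position of the $[0;\cdot]$ expansion — here position two — so $[0;1,3,\dots] > [0;1,2,\dots]$ for the \emph{worst-case} continuations on each side. One must be slightly careful that the continuations differ; the clean way is: the right contribution of any extension of $3^*13$ is at least $[0;1,3,\overline{3,1}]$ while the right contribution of \emph{some} extension of $3^*12$ can be as small as $[0;1,2,\overline{3,1}]$, and since $[0;1,3,\overline{3,1}] > [0;1,2,\overline{3,1}]$, combined with the fact that for the purposes of the stated inequality ``$\lambda_0(a)>v$'' means the bound holds for all extensions, we reduce to showing $\lambda_0(3^*12) > 3.957$ over all extensions, which then forces $\lambda_0(3^*13) > 3.957$ a fortiori after checking the minimal extension of $3^*13$ also clears the bound directly.

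So the crux is the single inequality: for every admissible bi-infinite extension of the block $3^*12$,
\[
[3;x_{-1},x_{-2},\dots] + [0;1,2,x_3,x_4,\dots] > 3.957.
\]
I would bound the left summand below by its infimum over $\{1,2,3\}^{\mathbb N}$, namely $[3;\overline{1,3}] = 3 + [0;\overline{1,3}]$; a short computation gives $[0;\overline{1,3}] = (\sqrt{21}-3)/6 \approx 0.2637$, so the left summand is $> 3.263$. For the right summand I bound below over continuations of $1,2$: the infimum of $[0;1,2,x_3,\dots]$ is attained at $[0;1,2,\overline{3,1}]$, and one computes this to be roughly $0.6945$. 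Adding, $3.263 + 0.694 = 3.957\ldots$ — this is borderline, which tells me the truncation must be done with enough precision: I would carry the left side to $[3;1,3,1,3,\dots]$ exactly (a quadratic surd) and the right side to $[0;1,2,3,1,3,1,\dots]$ exactly, then verify the sum strictly exceeds $3.957$ by an explicit rational interval arithmetic computation. The main obstacle is precisely this tightness: $3.957$ sits very close to the actual infimum $\lambda_0(\overline{3,1}\,3^*1\,2\,\overline{3,1})$, so a naive one- or two-term truncation will not suffice and one genuinely needs the closed-form periodic values (or a guaranteed-precision numerical bound) to separate the sum from the threshold. Once that single tight estimate is in hand, the monotonicity comparison with $3^*13$ is routine.
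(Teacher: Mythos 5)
Your proposal is correct and is essentially the intended (omitted) argument: bound $\lambda_0$ below by the extremal admissible continuations on each side, and the tight case does clear the threshold, since the infimum over all extensions of $3^*12$ in $\{1,2,3\}^{\mathbb{Z}}$ equals $[3;\overline{3,1}]+[0;1,2,\overline{3,1}] = 3+\tfrac{\sqrt{21}-3}{6}+\tfrac{19+\sqrt{21}}{34} = 3+\tfrac{3+10\sqrt{21}}{51} = 3.95736\ldots>3.957$, while the corresponding infimum for $3^*13$ is $[3;\overline{3,1}]+[0;1,3,\overline{3,1}]=4.029\ldots$, so the whole chain follows. One small slip to fix: the minimizing left tail is $[3;3,1,3,1,\dots]=[3;\overline{3,1}]$, not $[3;\overline{1,3}]$ (which equals $3.79\ldots$), although the value $(\sqrt{21}-3)/6\approx 0.2638$ you actually use for its fractional part is the correct one for $[0;\overline{3,1}]$.
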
 

By the previous lemma, it suffices to analyse the extensions to the left of $23^*11$ and $33^*11$, i.e., $123^*11$, $223^*11$, $323^*11$, $133^*11$, $233^*11$, $333^*11$. 

\begin{lemma}\phantomsection\label{l:M-L3}
\begin{itemize}
\item[(i)] $\lambda_0(323^*11)>\lambda_0(223^*11)>3.9678$
\item[(ii)] $\lambda_0(133^*11)<3.9228$ 
\end{itemize}
\end{lemma}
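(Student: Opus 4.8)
The plan is to derive both parts purely from the monotonicity of continued fractions and the restriction of all entries to $\{1,2,3\}$; the earlier lemmas are not needed here. Recall that $\lambda_0(w)=[a_0;a_{-1},a_{-2},\dots]+[0;a_1,a_2,\dots]$ and that the value of a continued fraction $[c_0;c_1,c_2,\dots]$ is strictly increasing in every even-indexed entry $c_{2k}$ and strictly decreasing in every odd-indexed entry $c_{2k+1}$. Consequently, once a prefix $[c_0;c_1,\dots,c_j]$ is fixed, the infimum of the value over all continuations with entries in $\{1,2,3\}$ is attained by taking $c_i=3$ for $i>j$ odd and $c_i=1$ for $i>j$ even, and the supremum by taking $c_i=1$ for $i>j$ odd and $c_i=3$ for $i>j$ even. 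Note that in the summand $[a_0;a_{-1},a_{-2},\dots]$ the digit $a_{-k}$ occupies the slot $c_k$, and in $[0;a_1,a_2,\dots]$ the digit $a_k$ occupies the slot $c_k$.

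For (i), the inequality $\lambda_0(323^{*}11)>\lambda_0(223^{*}11)$ is immediate from monotonicity: the words $323^{*}11$ and $223^{*}11$ differ only in $a_{-2}$, which sits in the even (hence increasing) slot $c_2$ of the left continued fraction, so raising it from $2$ to $3$ strictly increases $\lambda_0$ of every common extension; combined with the uniform bound below, this also yields $\lambda_0(323^{*}11)>3.9678$. For $\lambda_0(223^{*}11)>3.9678$ I bound the two summands from below: for any extension, $[3;2,2,a_{-3},\dots]\ge[3;2,2,\overline{31}]$ and $[0;1,1,a_3,\dots]\ge[0;1,1,\overline{31}]$. A short computation gives $[0;\overline{31}]=\frac{\sqrt{21}-3}{6}$, whence $[3;2,2,\overline{31}]=\frac{275+\sqrt{21}}{82}$ and $[0;1,1,\overline{31}]=\frac{\sqrt{21}+1}{10}$, so their sum is $\frac{708+23\sqrt{21}}{205}=3.967801\dots>3.9678$.

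Part (ii) is the mirror-image estimate, now bounding the two summands from above. For every extension of $133^{*}11$ with digits in $\{1,2,3\}$ we have $[3;3,1,a_{-3},\dots]\le[3;3,1,\overline{13}]=\frac{313-\sqrt{21}}{94}$ and $[0;1,1,a_3,\dots]\le[0;1,1,\overline{13}]=\frac{11-\sqrt{21}}{10}$, and adding these gives $\lambda_0(133^{*}11)\le\frac{1041-26\sqrt{21}}{235}=3.922779\dots<3.9228$.

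There is no conceptual obstacle here. The part that needs care is the bookkeeping — determining the parity of each free slot so that the correct extremal continuation ($\overline{31}$ for a lower bound, $\overline{13}$ for an upper bound) is chosen for each of the two summands in each case — together with evaluating the resulting periodic continued fractions exactly, since the target thresholds $3.9678$ and $3.9228$ are tight against the bounds actually produced (the margins being of order $10^{-5}$ or smaller).
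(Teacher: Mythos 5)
Your proof is correct and is exactly the routine verification the paper leaves implicit for these numerical lemmas: bound each of the two continued-fraction summands by its extremal periodic continuation ($\overline{3,1}$ for lower bounds, $\overline{1,3}$ for upper bounds, with the parity bookkeeping done correctly), and your exact values $\tfrac{708+23\sqrt{21}}{205}=3.9678011\ldots$ and $\tfrac{1041-26\sqrt{21}}{235}=3.9227788\ldots$ check out against the stated thresholds. Nothing further is needed.
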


By the previous lemma, it suffices to analyse the extensions to the right of $123^*11$, $233^*11$, $333^*11$, i.e., $123^*111$, $123^*112$, $123^*113$, $233^*111$, $233^*112$, $233^*113$, $333^*111$, $333^*112$, $333^*113$. 

\begin{lemma}\phantomsection\label{l:M-L4} 
\begin{itemize}
\item[(i)] $\lambda_0(123^*111) > 3.9673$
\item[(ii)] if $131$ and $312$ are forbidden, then $\lambda_0(233^*113)<\lambda_0(233^*112)<\lambda_0(233^*111)\leq \lambda_0(21233^*11132) < 3.93676$ 
\item[(iii)] $\lambda_0(333^*113)<\lambda_0(333^*112) < 3.8969$
\end{itemize}
\end{lemma}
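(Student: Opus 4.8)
The plan is to bound each of the three quantities by comparing the candidate bi-infinite sequences with explicit periodic (or eventually periodic) sequences and estimating the continued fractions $[x_0; x_{-1}, x_{-2}, \ldots]$ and $[0; x_1, x_2, \ldots]$ from the two sides separately, exploiting the standard fact that, for continued fractions, increasing an entry in an even position (counted from the relevant end) increases the value while increasing an entry in an odd position decreases it, and vice versa. For each of the three items the strategy is the same: having fixed the local pattern around $x_0^*$, we push the analysis outward one digit at a time, and at each stage either we reach a configuration that forces the value comfortably above the claimed threshold (items (i) and (iii), the ``large'' cases), or we identify the extremal continuation under the stated constraints and compare against the target value (item (ii), the ``small'' case).

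For item (i), I would start from $123^*111$ and observe that the left tail begins $\ldots 21 3^*$ and the right tail begins $3^*111\ldots$; bounding $[0;1,1,1,\ldots]$ from below and $[3;2,1,\ldots]$ from below using the first few guaranteed digits, together with the worst-case completions (replacing unknown tails by the extreme admissible periodic words in $\{1,2,3\}$, e.g.\ $\overline{13}$ or $\overline{31}$ depending on parity), should already give a sum exceeding $3.9673$. For item (iii), from $333^*11$ we know the left tail opens with $\ldots 333^*$, which makes $[3; 3, 3, \ldots]$ fairly large, but the crucial point is that a run of three (or more) $3$'s on the left, combined with $x_0 = 3$ and the digits $11$ on the right, \emph{caps} the Markov value: one estimates $[3;3,3,\text{(anything)}] < $ something like $3.3$-ish and $[0;1,1,3,\ldots] < 0.6$-ish, and the sum at position $0$ comes out below $3.8969$; one must also check the neighbouring positions (inside the forced block $333^*11$) do not exceed this, but those are smaller still.

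Item (ii) is the main obstacle and the heart of the lemma, since here we need an \emph{upper} bound that is delicate (the threshold $3.93676$ is extremely close to the numbers in play) and we must use the hypothesis that $131$ and $312$ are forbidden. The idea is: starting from $233^*11$, the forbidden-word hypothesis restricts how the tails can continue --- after $33^*11$ on the right, the next digit cannot be such that $131$ appears, and symmetrically on the left $\ldots 233^*$ the constraint forbids $312$, which is exactly what pins down the continuation to be (eventually) the periodic pattern $\overline{21}$ on the right and $\overline{12}$ on the left, i.e.\ it forces us to the comparison sequence $21233^*11132$ and then its periodic extension. The monotonicity in the chain $\lambda_0(233^*113)<\lambda_0(233^*112)<\lambda_0(233^*111)$ follows from the position-parity rule (the varied digit sits in a fixed position relative to $x_0$), and the final inequality $\lambda_0(233^*111)\le \lambda_0(21233^*11132)$ requires showing that among all admissible (constraint-respecting) extensions of $233^*111$, the one realising the largest value at position $0$ is the displayed word with its forced periodic tails; this is where one invokes that pushing each remaining free digit toward its value-increasing extreme is precisely what the forbidden words $131,312$ either permit or block, so the extremal admissible sequence is $\overline{12}\,21233^*11132\,\overline{21}$ (or the relevant one-sided-periodic truncation), whose value one then computes to be below $3.93676$. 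I would also need to verify the ``$\lambda_0$ over a finite word'' convention: that \emph{every} bi-infinite extension respecting the constraints, not just the periodic one, stays below the bound --- this follows by the same monotonicity, since the periodic completions are the extremal ones. The routine part is then the explicit evaluation of the finitely many continued-fraction bounds to the required precision.
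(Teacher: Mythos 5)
Your proposal follows the same standard continued-fraction comparison argument that the paper (implicitly) relies on for these computational lemmas: fix the displayed digits, bound the two one-sided continued fractions $[x_0;x_{-1},\dots]$ and $[0;x_1,\dots]$ by their parity-extremal admissible completions, and in item (ii) use the forbidden words $131$ and $312$ precisely to force the value-maximising digits at positions $-4$ and $+5$ to be $2$, yielding the comparison word $21233^*11132$. The only slight imprecision is that the final bound $\lambda_0(21233^*11132)<3.93676$ is, by the paper's convention, a worst-case bound over \emph{all} bi-infinite extensions of that finite word (so the relevant extremal tails are of type $\overline{13}$/$\overline{31}$, not $\overline{12}$/$\overline{21}$, and the binding case in (iii) is $[0;1,1,2,\dots]$ rather than $[0;1,1,3,\dots]$), but this does not affect the validity of the method and the numerics do close, e.g.\ $3.2971357+0.6396222=3.9367579<3.93676$.
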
 

By the previous lemma, it suffices to analyse the extensions to the left of $123^*112$, $123^*113$, $333^*111$, i.e., $1123^*112$, $2123^*112$, $3123^*112$, $1123^*113$, $2123^*113$, $3123^*113$, $1333^*111$, $2333^*111$, $3333^*111$. 

\begin{lemma}\phantomsection\label{l:M-L5} 
\begin{itemize}
\item[(i)] $\lambda_0(1123^*112)>\lambda_0(2123^*112)>3.9414$; in particular, $123^*112$ is forbidden if $312$ is forbidden 
\item[(ii)] $\lambda_0(2123^*113) < 3.93768$ 
\item[(iii)] if $131$ is forbidden, then $\lambda_0(1123^*113)\geq \lambda_0(1123^*11323) > 3.9419$  
\end{itemize}
\end{lemma}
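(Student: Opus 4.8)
The plan is to reduce each item to a finite family of explicit continued fraction estimates. Throughout one uses the Perron-type identity $\lambda_0(w) = [w_0; w_{-1}, w_{-2}, \ldots] + [0; w_1, w_2, \ldots]$, the elementary monotonicity of continued fractions (replacing a single partial quotient $a_k$ by a larger one strictly increases $[a_0; a_1, a_2, \ldots]$ when $k$ is even and strictly decreases it when $k$ is odd), and the fact that, over one-sided completions with digits in $\{1,2,3\}$ respecting the finitely many blocks already excluded in Lemmas~\ref{l:M-L1}--\ref{l:M-L4} (in particular $131$, excluded by Lemma~\ref{l:M-L1}(i), and $313$, excluded by Lemma~\ref{l:M-L2}), the supremum and infimum of a one-sided tail are attained by explicit eventually periodic sequences, hence are quadratic surds. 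One also uses the standing assumption that $m(x)$ lies very close to $3.9387762419811$, so that any central block $P$ with $\lambda_0(P)>m(x)$ cannot occur in $x$.

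For item (i), note that $1123^*112$ and $2123^*112$ differ only in the digit $x_{-3}$, which occupies the odd-indexed slot of the backward continued fraction $[3;2,1,x_{-3},\ldots]$; hence decreasing it from $2$ to $1$ strictly increases $\lambda_0$, giving the first inequality. For $\lambda_0(2123^*112)>3.9414$ I would minimise each of the tails $[3;2,1,2,\ldots]$ and $[0;1,1,2,\ldots]$ over admissible completions, exhibit the eventually periodic minimisers, and check numerically that the sum exceeds $3.9414$ (already the unconstrained minimum does, and the forbidden-block constraints only raise it). By the first inequality every completion of $1123^*112$ then also has $\lambda_0>3.9414$. Finally, the admissible left extensions of $123^*112$ are $1123^*112$, $2123^*112$ and $3123^*112$, and the last contains the block $312$; so if $312$ is forbidden the surviving extensions all have $\lambda_0>3.9414>m(x)$, and $123^*112$ cannot occur.

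Items (ii) and (iii) are each a pair of one-sided tail estimates. For (ii), with no extra hypothesis, one bounds $\lambda_0(2123^*113)$ from above by the sum of the suprema of $[3;2,1,2,\ldots]$ and $[0;1,1,3,\ldots]$ over completions avoiding $131$ and $313$, and checks this sum is $<3.93768$; here the already-established blocks are genuinely needed, since the naive maximising tails $\overline{13}$ leave the bound right at the threshold, and it is the exclusion of $131$ and $313$ (forcing, e.g., $x_4\ge 2$ because $x_2x_3=13$, and breaking periodicity a few digits later) that pushes it below $3.93768$. For (iii), assuming $131$ is forbidden, any admissible completion of $1123^*113$ has $x_4\neq 1$ (otherwise $x_2x_3x_4=131$); using monotonicity in the even slot $x_4$ and then the odd slot $x_5$, the infimum of $\lambda_0$ over admissible completions is attained with $x_4=2$ and $x_5=3$ — here $x_4=2$ beats $x_4=3$ because $[2;x_5,\ldots]<3\le[3;x_5,\ldots]$ regardless of the continuation — so $\lambda_0(1123^*113)\ge\lambda_0(1123^*11323)$; it then remains to minimise the two tails $[3;2,1,1,\ldots]$ and $[0;1,1,3,2,3,\ldots]$ of $1123^*11323$ over admissible completions and check the sum exceeds $3.9419$.

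The main obstacle is bookkeeping rather than conceptual. Once the blocks $131$ and $313$ are imposed the extremal completions are no longer the simple periodic tails $\overline{13}$ and $\overline{31}$; for each tail one must run the greedy digit-by-digit choice dictated by the parity of the slot, backtracking whenever a forbidden block would be created, until the tail becomes periodic and its value is a quadratic surd comparable with the thresholds. Moreover $3.9414$, $3.93768$ and $3.9419$ all lie within roughly $0.004$ of the corresponding true extremal values, so the continued fractions must be carried to sufficient depth (or the periodic surds computed exactly) to certify each inequality.
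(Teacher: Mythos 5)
Your proposal is correct and follows the standard continued-fraction monotonicity-and-tail-estimate method that the paper (which states this lemma without written proof) clearly intends, and the tight numerical thresholds you flag do check out: e.g.\ the minimum of $\lambda_0(2123^*112)$ over all extensions is $[3;2,1,2,\overline{1,3}]+[0;1,1,2,\overline{1,3}]\approx 3.94142>3.9414$, and the minimum of $\lambda_0(1123^*11323)$ is $\approx 3.94196>3.9419$. The only slight inaccuracy is in (ii): the unconstrained maximum $[3;2,1,2,\overline{3,1}]+[0;1,1,3,\overline{3,1}]\approx 3.937672$ already lies below $3.93768$, so the exclusions of $131$ and $313$ are not actually needed there (though invoking them is harmless).
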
 

By the previous lemma, it suffices to analyse the extensions to the right of $1333^*111$, $2333^*111$, $3333^*111$, i.e., $1333^*1111$, $1333^*1112$, $1333^*1113$, $2333^*1111$, $2333^*1112$, $2333^*1113$, $3333^*1111$, $3333^*1112$, $3333^*1113$. 

\begin{lemma}\phantomsection\label{l:M-L6}
\begin{itemize}
\item[(i)] $\lambda_0(333^*1113)>3.94084$ 
\item[(ii)] $\lambda_0(3333^*1111)<\lambda_0(2333^*1111)<\lambda_0(1333^*1111)< 3.92786$ 
\item[(iii)] $\lambda_0(3333^*1112)<\lambda_0(2333^*1112)< 3.93844$ 
\end{itemize}
\end{lemma}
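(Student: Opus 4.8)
The plan is to treat the three items exactly as in the preceding lemmas, namely by reducing each inequality to a finite computation with continued fractions. Recall that, writing $x=\dots x_{-1}x_0^*x_1\dots$, the Markov value satisfies $\lambda_0(x)=[x_0;x_{-1},x_{-2},\dots]+[0;x_1,x_2,\dots]$ and, more generally, $\lambda_n(x)$ is the analogous quantity centred at position $n$; the inequalities of the form $\lambda_0(a)>v$ (resp.\ $<v$) are to be read as $\lambda_0(w)>v$ (resp.\ the supremum over admissible $w$ is $<v$) for every bi-infinite extension $w$ of the finite string $a$. For the lower bounds (i), one exhibits the extension that minimises $\lambda_0$ and shows even this minimiser exceeds the stated value: since $[0;\,\cdot\,]$ and $[\,\cdot\,;\,\cdot\,]$ are monotone in each partial quotient in the usual alternating fashion, the minimising bi-infinite extension of a given central block is obtained by appending the periodic tails $\overline{12}$ or $\overline{21}$ on the appropriate sides (the tail that makes the value smallest at each end). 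Thus for (i) I would compute $[0;1,1,1,3,\overline{2,1}]$ on the right — or whichever one-sided periodic completion is forced to be the infimum — together with the corresponding left tail after $333^*$, add them, and check the sum is $>3.94084$; this is a rational computation with an explicitly bounded denominator, hence rigorous.

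For the upper bounds (ii) and (iii), the argument is the mirror image: one must bound $\lambda_n(x)$ for \emph{all} $n$ and all admissible completions. The key point, as in the earlier lemmas, is that once the central window is long enough the value $\lambda_0$ is essentially pinned down by the window and the two best completions, while $\lambda_n$ for $n\neq 0$ is controlled because any position far from the centre sees a block containing either a large run of $3$'s (forcing a small continued-fraction contribution) or, after invoking the already-established constraints, a sub-block that has been shown to produce a value below the target. Concretely, for $1333^*1111$ one checks $[4;3,3,\dots]+[0;1,1,1,1,\dots]$: the left side is at most $[4;3,3,2,\dots]$-ish and the right side is bounded by $[0;1,1,1,1,2,\dots]$; maximising each over admissible tails and summing gives a value $<3.92786$, and prepending $2$ or $3$ before the $1$ only decreases the left-hand continued fraction, which is why $\lambda_0(3333^*1111)<\lambda_0(2333^*1111)<\lambda_0(1333^*1111)$. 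The same template handles (iii) with the window $333^*1112$, where the trailing $2$ and the admissibility restrictions (no $131$, no $312$, carried over from Lemmas~\ref{l:M-L4} and~\ref{l:M-L5}) are what force the bound $3.93844$.

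The genuinely delicate step — the one I expect to be the main obstacle — is not any single continued-fraction estimate but the bookkeeping that guarantees the window chosen is long enough that \emph{no other shift} $\lambda_n$ can exceed the target while $\lambda_0$ is below it. In other words, one has to be sure that after conditioning on all the forbidden transitions accumulated so far, every admissible bi-infinite extension of, say, $1333^*1111$ really does have $\sup_n\lambda_n<3.92786$, not merely $\lambda_0<3.92786$. This is where one leans on the structure of the case analysis: the forbidden words $131$, $312$ (and their transposes) rule out exactly the local patterns that would otherwise let a far-away shift pick up a contribution near $4.1$ or above (cf.\ Lemma~\ref{l:M-L1}(i) and Lemma~\ref{l:M-L2}), so that the only candidate for the supremum is a bounded set of shifts near the centre, each of which is checked by a finite computation. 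Once that reduction is in place, items (i)--(iii) follow by evaluating finitely many continued fractions of the type described and comparing with the stated constants.
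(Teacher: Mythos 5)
The computational core of your proposal---reduce each inequality to a finite continued-fraction evaluation by choosing the extremal admissible tails on each side, using the alternating monotonicity of $[0;a_1,a_2,\dots]$ in its partial quotients---is exactly the intended argument; the paper states these lemmas without proof precisely because they are such finite verifications. Two corrections, though. First, and most importantly, you misread the paper's convention: $\lambda_0(a)<v$ means $\lambda_0(w)<v$ for every bi-infinite extension $w$ of the finite window $a$, where $\lambda_0$ is the value \emph{at the starred position only}. Your ``genuinely delicate step''---verifying that $\sup_n\lambda_n(w)<3.92786$ for every admissible extension of $1333^*1111$---is not required, and is in fact false: an extension can contain, say, $13^*1$ far from the centre and have $\sup_n\lambda_n>4.11$ by Lemma~\ref{l:M-L1}(i). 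The role of these lemmas in the overall scheme is only to show that the Markov value of a sequence in the target range cannot be \emph{attained} at a position whose local window is one of the listed words; the other shifts are handled by the global case analysis, not inside this lemma. Pursuing the stronger statement would stall the proof.

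Second, since items (i)--(iii) carry no hypothesis of the form ``if $131$ and $312$ are forbidden'' (contrast Lemma~\ref{l:M-L4}(ii)), the extremization must run over \emph{all} of $\{1,2,3\}^{\mathbb{Z}}$, so you may not invoke the forbidden words from Lemmas~\ref{l:M-L1} and~\ref{l:M-L2} here; correspondingly, the extremal one-sided tails over the alphabet $\{1,2,3\}$ are of type $\overline{31}$ and $\overline{13}$, not $\overline{21}$ and $\overline{12}$. For instance, the minimum of $\lambda_0$ over extensions of $333^*1113$ is $[3;3,3,\overline{3,1}]+[0;1,1,1,3,\overline{3,1}]=3.94110\ldots>3.94084$, and the maximum over extensions of $2333^*1112$ is $[3;3,3,2,\overline{3,1}]+[0;1,1,1,2,\overline{1,3}]=3.938434\ldots<3.93844$; the monotone comparisons within (ii) and (iii) follow because increasing the partial quotient at the (odd) position $x_{-3}$ decreases the left continued fraction. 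With these two adjustments your argument is complete and coincides with the paper's.
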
  

By the previous lemma, it suffices to analyse the extensions to the left of $1333^*1112$, i.e., $11333^*1112$, $21333^*1112$, $31333^*1112$. Since $213$ and $313$ are forbidden (cf. Lemma \ref{l:M-L2}), our task is reduced to study the extensions to the right of $11333^*1112$, i.e., $11333^*11121$, $11333^*11122$, $11333^*11123$. 

\begin{lemma}\label{l:M-L8} $\lambda_0(11333^*11123)<\lambda_0(11333^*11122)<3.93631$
\end{lemma}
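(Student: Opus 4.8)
The plan is to split $\lambda_0$ into its two one-sided continued fractions and to bound each summand by its largest periodic completion. If $x\in\{1,2,3\}^{\Z}$ is any bi-infinite extension of $11333^*11122$, then $x_0=3$, $(x_{-1},x_{-2},x_{-3},x_{-4})=(3,3,1,1)$ and $(x_1,x_2,x_3,x_4,x_5)=(1,1,1,2,2)$, so by the definition of $\lambda_0$,
\[
\lambda_0(x)=[3;3,3,1,1,x_{-5},x_{-6},\dots]+[0;1,1,1,2,2,x_6,x_7,\dots],
\]
and the two tails $(x_{-5},x_{-6},\dots)$ and $(x_6,x_7,\dots)$ run freely through $\{1,2,3\}^{\N}$. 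For $11333^*11123$ the picture is identical except that $x_5=3$.

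I would then invoke the alternating monotonicity of continued fractions: $[a_0;a_1,a_2,\dots]$ is increasing in $a_k$ for $k$ even and decreasing for $k$ odd, while $[0;a_1,a_2,\dots]$ is decreasing in $a_k$ for $k$ odd and increasing for $k$ even. In the left continued fraction the first free partial quotient is $a_5=x_{-5}$, lying in an odd slot, so it is maximised by the periodic tail $\overline{13}$; in the right continued fraction the first free partial quotient is $a_6=x_6$, lying in an even slot, so it is maximised by the periodic tail $\overline{31}$. Since the left and right tails vary independently, the maximum of $\lambda_0$ over all extensions of $11333^*11122$ equals $[3;3,3,1,1,\overline{13}]+[0;1,1,1,2,2,\overline{31}]$. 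For $11333^*11123$ the corresponding maximum is $[3;3,3,1,1,\overline{13}]+[0;1,1,1,2,3,\overline{31}]$, which is strictly smaller because its fifth partial quotient has moved from $2$ to $3$ in an odd slot; this already gives the first inequality $\lambda_0(11333^*11123)<\lambda_0(11333^*11122)$ of the lemma.

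It remains to estimate $[3;3,3,1,1,\overline{13}]+[0;1,1,1,2,2,\overline{31}]$. Both periodic tails are quadratic irrationals in $\Q(\sqrt{21})$ (the period $\overline{13}$ produces $[3;\overline{13}]=(3+\sqrt{21})/2$), and unfolding the finite heads $3,3,1,1$ and $1,1,1,2,2$ is a routine computation yielding
\[
[3;3,3,1,1,\overline{13}]=\frac{5789-\sqrt{21}}{1750},\qquad[0;1,1,1,2,2,\overline{31}]=\frac{715-\sqrt{21}}{1126}.
\]
Adding these and clearing denominators, the bound $\lambda_0(11333^*11122)<3.93631$ becomes an elementary inequality of the form $a-b\sqrt{21}<c$ with explicit positive integers $a,b,c$, i.e.\ a lower bound on $\sqrt{21}$ by a rational below $4.58$, which is immediate since $4.58^2=20.9764<21$. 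Both inequalities of the lemma follow.

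The only delicate point here is numerical rather than conceptual: the maximum of $\lambda_0$ over extensions of $11333^*11122$ is about $3.93630$, only some $10^{-5}$ below the asserted threshold $3.93631$. Since there is essentially no slack, the extremal tails in the second step must be identified with the parity bookkeeping carried out exactly — a slip of a single partial quotient would destroy the bound — and the two quadratic surds must be evaluated exactly, or carried to several guard digits, rather than estimated loosely. Once these values are in hand, the remainder is a finite rational check.
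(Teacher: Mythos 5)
Your proof is correct and is exactly the standard argument the paper leaves implicit for these computational lemmas: bound each half of $\lambda_0$ by its extremal (greedily chosen, alternating-parity) periodic continuation and evaluate the resulting quadratic surds. I have checked that the extremal tails $\overline{13}$ and $\overline{31}$, the exact values $\tfrac{5789-\sqrt{21}}{1750}$ and $\tfrac{715-\sqrt{21}}{1126}$, and the final comparison (sum $\approx 3.9363027 < 3.93631$) are all right, so the argument goes through as written.
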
 

By the previous lemma, it suffices to analyse the extensions to the left and right of $11333^*11121$ (while taking into account that $213$ is forbidden), i.e., $111333^*111211$, $211333^*111211$, $311333^*111211$, $111333^*111212$, $211333^*111212$, $311333^*111212$. 

\begin{lemma}\label{l:M-L10} $\lambda_0(311333^*111211)<\lambda_0(211333^*111211)<\lambda_0(111333^*111211)< 3.938464$
\end{lemma}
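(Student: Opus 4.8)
The plan is to establish the three inequalities in Lemma~\ref{l:M-L10} by the same continued-fraction estimates used throughout Section~\ref{s:M-L}, namely by splitting $\lambda_0$ at the asterisk into a ``left'' continued fraction and a ``right'' continued fraction and bounding each piece monotonically. Concretely, for any bi-infinite extension $w$ of $111333^*111211$ we have
\[
\lambda_0(w) = [3; 3, 3, 1, 1, 1, \dots] + [0; 1, 1, 1, 2, 1, 1, \dots],
\]
where the left tail begins $3,3,1,1,1$ and the right tail begins $1,1,1,2,1,1$. Since the digits are bounded by the alphabet $\{1,2,3\}$, truncating each continued fraction after enough terms and replacing the unknown remainder by its worst-case value (the largest admissible completion for an upper bound) gives an explicit rational upper bound; one checks it is below $3.938464$. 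Because prepending $2$ or $3$ in front of the leftmost $1$ of the string $111333^*\ldots$ only decreases the value of the left continued fraction $[3;3,3,1,1,1,\dots]$ — an extra small digit deep in an odd position makes the tail $[0;1,1,1,\dots]$ smaller in the relevant parity — we get the chain $\lambda_0(311333^*111211) < \lambda_0(211333^*111211) < \lambda_0(111333^*111211)$ essentially for free from the standard monotonicity of continued fractions in each coordinate (the parity of the position of the altered digit controls the direction of the inequality, exactly as in the comparison chains appearing in Lemmas~\ref{l:M-L1}, \ref{l:M-L4}, \ref{l:M-L6}).

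First I would pin down precisely how far each one-sided continued fraction must be expanded so that the truncation error is comfortably smaller than the gap between the computed value and the threshold $3.938464$; since the target value is around $3.9385$ and the discrepancies in the neighbouring lemmas are on the order of $10^{-4}$ or finer, carrying the expansion to roughly a dozen digits on each side should suffice, with explicit interval arithmetic on the tails. Then I would record the worst-case completions: for the upper bound on $\lambda_0(111333^*111211)$ we complete the left tail $\ldots 3,3,1,1,1,?$ and the right tail $\ldots 1,1,1,2,1,1,?$ in whichever admissible way (subject to the forbidden words $131$, $312$, $213$, $313$ already in force by Lemmas~\ref{l:M-L2}, \ref{l:M-L4}, \ref{l:M-L5}) maximises $\lambda_0$, and verify the resulting rational is $<3.938464$. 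Finally I would note the two strict inequalities in the chain follow from the sign of the partial derivative of $\lambda_0$ with respect to a digit at the given (odd) distance from the origin, which is negative here, so replacing the outermost $1$ on the left by a larger digit strictly decreases the value.

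The only genuinely delicate point — and the one I expect to be the main obstacle — is making sure the truncation is tight \emph{enough}: the number $3.938464$ is extremely close to the Markov values being chased in this section (everything here lives within about $10^{-5}$ of $3.938776$), so a careless bound on the continued-fraction tails could fail to separate $\lambda_0(111333^*111211)$ from the threshold. I would handle this by keeping rigorous rational upper and lower bounds on each tail (using that $[0;c_1,c_2,\dots]\in[\,[0;c_1,\dots,c_k,1],[0;c_1,\dots,c_k,\overline{1}]\,]$ or the analogous bracket with the alphabet restricted to $\{1,2,3\}$), and expanding until the bracket is an order of magnitude narrower than $3.938464$ minus the nominal value. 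Everything else is routine: the admissibility constraints needed have all been extracted in the earlier lemmas, and the monotonicity comparisons are of exactly the type already invoked repeatedly above, so once the numerics are certified the lemma follows.
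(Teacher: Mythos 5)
Your proposal is correct and is essentially the computation the paper leaves implicit for all the lemmas of Section~\ref{s:M-L}: split $\lambda_0$ at the asterisk into $[3;3,3,1,1,1,\dots]+[0;1,1,1,2,1,1,\dots]$, bound each side by its worst-case tail over the alphabet $\{1,2,3\}$ (namely $\overline{3,1}$ continuing the left branch and $\overline{1,3}$ continuing the right branch, which gives $\sup\lambda_0(111333^*111211)=\frac{76y+43}{23y+13}+\frac{7z+5}{11z+8}=3.9384636\dots<3.938464$ with $y=\frac{3+\sqrt{21}}{6}$, $z=\frac{\sqrt{21}-1}{2}$ --- a margin of only about $4\times10^{-7}$), and deduce the chain from the fact that the modified entry $a_{-5}$ sits at an odd index of the left continued fraction, so increasing it from $1$ to $2$ to $3$ strictly decreases $\lambda_0$ for matching extensions. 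Your one inessential deviation is invoking the forbidden words $131$, $312$, $213$, $313$, which the lemma does not assume and which are not needed; on the other hand you correctly identified the genuine danger, namely that the generic truncation bracket (tails in $\mathbb{N}$ rather than $\{1,2,3\}$) yields only $\frac{119}{36}+\frac{19}{30}=3.93889$ and fails, so the restricted-alphabet worst case is mandatory here.
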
 

By the previous lemma (and after recalling that $131$ and $3111333$ are forbidden, cf. Lemmas \ref{l:M-L1} and \ref{l:M-L6} (i)), it suffices to analyse the extensions to the left of  $111333^*111212$, $211333^*111212$, $311333^*111212$, i.e., $1111333^*111212$, $1211333^*111212$,  $2111333^*111212$, $2211333^*111212$, $2311333^*111212$, $3211333^*111212$, $3311333^*111212$. 

\begin{lemma}\label{l:M-L11} $\lambda_0(2111333^*111212) > 3.93889$
\end{lemma}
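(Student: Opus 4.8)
The plan is to establish the lower bound $\lambda_0(2111333^*111212) > 3.93889$ by exhibiting, for an arbitrary bi-infinite extension $w = \dots w_{-1}\, 2111333^*111212\, w_8 \dots$, a suitable position $n$ at which $\lambda_n(w)$ is already forced to exceed $3.93889$, regardless of the unspecified digits. As in the preceding lemmas, the key is to locate within the constrained word a short block whose two-sided continued-fraction value is pinned from both sides by the forbidden-string restrictions accumulated so far (namely that $131$, $312$, $313$, $213$ and $3111333$ are forbidden, cf. Lemmas \ref{l:M-L1}, \ref{l:M-L2}, \ref{l:M-L6}(i), and the discussion following Lemma \ref{l:M-L6}), together with the digits that are already prescribed in $2111333^*111212$.

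First I would centre attention on the string $333^*1112$, or a slight enlargement of it, and write the relevant $\lambda$-value as $[a_0; a_{-1}, a_{-2}, \dots] + [0; a_1, a_2, \dots]$ with the head $a_0 a_{-1} \dots$ read off from $\dots 2111333^*$ and the tail $a_1 a_2 \dots$ read off from $\dots 111212 \dots$. The digits $2,1,1,1,3,3$ to the left of the marked $3$ and the digits $1,1,1,2,1,2$ to the right are fixed; the only freedom is in $w_{\le -8}$ and $w_{\ge 9}$. Because continued fractions are monotone in a controlled (alternating) way with respect to the tail, each unknown one-sided tail is bracketed: extending by the smallest admissible continuation and by the largest admissible continuation gives rigorous lower bounds on each of the two half-values. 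Summing the two lower bounds yields a numerical lower bound for $\lambda_0(2111333^*111212)$, and the claim is that with enough prescribed digits on each side (here six on each side, which is ample) this bound already clears $3.93889$. One must of course check that the worst admissible continuations are themselves consistent with the forbidden strings — e.g. that one cannot begin the left tail with a digit that would recreate $3111333$ or $313$ — but the constraints only help, since they eliminate precisely the continuations that would lower the value.

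Concretely, the computation reduces to: (a) determine, using the forbidden-string list, the set of admissible length-$k$ prefixes of the left tail $w_{-8} w_{-9}\dots$ and of the right tail $w_9 w_{10}\dots$ for a modest $k$ (say $k = 4$ or $5$); (b) for each, compute $[a_0; a_{-1}, \dots, a_{-(7+k)}]$ and $[0; a_1, \dots, a_{6+k}]$ and, using the standard error estimate $|[0;b_1,\dots,b_r] - [0;b_1,\dots,b_r,b_{r+1},\dots]| < 1/(q_r q_{r+1}) \le 1/q_r^2$ for the convergent denominators $q_r$, convert these to rigorous two-sided enclosures; (c) minimise the resulting sum of lower bounds over all admissible prefix pairs and confirm the minimum exceeds $3.93889$. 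The main obstacle — and it is a mild one — is purely bookkeeping: one must make sure the forbidden-string constraints are correctly propagated so that no spuriously small continuation is admitted, and that the truncation depth $k$ is taken large enough that the continued-fraction enclosures are tight enough to separate the true value from $3.93889$ (the gap here is on the order of a few times $10^{-6}$ once the sixth decimal is in play, so one needs convergent denominators of size a few hundred, i.e. $k$ around $5$). This is exactly the type of finite, rigorous continued-fraction estimate carried out in Lemmas \ref{l:M-L1}--\ref{l:M-L10} and in \cite{MM20}, and it goes through by the same routine.
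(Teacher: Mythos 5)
Your proposal is the same (standard) verification that the paper implicitly relies on for all the unproved lemmas of Section~\ref{s:M-L}: split $\lambda_0=[3;3,3,1,1,1,2,\dots]+[0;1,1,1,2,1,2,\dots]$ and minimise each half over the admissible tails using the alternating monotonicity of continued fractions. One caveat on the numerics: the inequality is much tighter than you estimate --- the minimum over all extensions in $\{1,2,3\}^{\mathbb{Z}}$ is attained at the tails $\overline{31}$ on both sides and equals $[3;3,3,1,1,1,2,\overline{31}]+[0;1,1,1,2,1,2,\overline{31}]=\frac{1527+119\sqrt{21}}{6(77+6\sqrt{21})}+\frac{93+7\sqrt{21}}{147+11\sqrt{21}}\approx 3.93889029$, only about $3\times 10^{-7}$ above $3.93889$ --- so the crude bracket $x,y\in(0,1)$ for the unknown tails gives only $\frac{314}{95}+\frac{19}{30}\approx 3.93860$ and fails, and a truncation depth of $k\approx 5$ with convergent-denominator error bounds is not quite enough; one really needs to exploit that the digits lie in $\{1,2,3\}$ and push the extremal tails essentially to their limit (or just evaluate the two quadratic irrationals above exactly).
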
 

By the previous lemma, it suffices to analyse the extensions to the right of $1111333^*111212$, $1211333^*111212$, $2211333^*111212$, $2311333^*111212$, $3211333^*111212$, $3311333^*111212$, i.e., 
\begin{itemize}
\item $1111333^*1112121$, $1111333^*1112122$, $1111333^*1112123$
\item $1211333^*1112121$, $1211333^*1112122$, $1211333^*1112123$ 
\item $2211333^*1112121$, $2211333^*1112122$, $2211333^*1112123$
\item $2311333^*1112121$, $2311333^*1112122$, $2311333^*1112123$
\item $3211333^*1112121$, $3211333^*1112122$, $3211333^*1112123$
\item $3311333^*1112121$, $3311333^*1112122$, $3311333^*1112123$
\end{itemize} 

\begin{lemma}\phantomsection\label{l:M-L12} 
\begin{itemize}
\item[(i)] $\lambda_0(1111333^*1112121)>\lambda_0(1111333^*1112122)> 3.938835$ 
\item[(ii)] $\max\{\lambda_0(1211333^*1112123), \lambda_0(1211333^*1112122), \lambda_0(2211333^*1112123)\}<\lambda_0(2211333^*1112122)<3.938751$
\item[(iii)] $\lambda_0(3211333^*1112121)>\lambda_0(2211333^*1112121)>3.938824$ 
\item[(iv)] $\lambda_0(3211333^*1112123)$, $\lambda_0(2311333^*1112122)$, $\lambda_0(2311333^*1112123)$, $\lambda_0(3311333^*1112122)$, $\lambda_0(3311333^*1112123)$ $<$ $\lambda_0(3211333^*1112122) < 3.9387718$
\end{itemize}
\end{lemma}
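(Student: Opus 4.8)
The plan is to reduce each of the inequalities (i)--(iv) to the comparison of finitely many explicitly computable algebraic numbers, using only the elementary monotonicity of continued fractions. For an admissible bi-infinite extension $x$ of one of the listed words one has
\[
\lambda_0(x) = [x_0; x_{-1}, x_{-2}, \dots] + [0; x_1, x_2, \dots],
\]
and the basic fact is that, once the first $k$ partial quotients of a continued fraction with all entries in $\{1,2,3\}$ are fixed, the value of the continued fraction is squeezed between the two numbers obtained by completing the prescribed prefix with the value-extremising admissible continuations, the parity of $k$ deciding which of the two is the supremum and which the infimum. Here ``admissible'' means that no finite word already shown to be forbidden occurs, namely $131$, $312$, $313$ (Lemmas~\ref{l:M-L1}, \ref{l:M-L2}), $32311$ (Lemma~\ref{l:M-L3}(i)), $3111333$ (Lemma~\ref{l:M-L6}(i)) and the further forbidden words established in the intervening lemmas, together with all their reversals, since the Markov spectrum is invariant under transposition. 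Because a value-extremising admissible continuation is eventually periodic (with small period), each one-sided completion is an explicit quadratic surd, so that $\lambda_0$ of the corresponding two-sided extension is computable to any desired precision.

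Granting this, I would proceed word by word through the list. For the lower bounds (i) and (iii), for each listed word $w$ I would run the value-\emph{minimising} admissible continuation on both sides — appending at every step the parity-appropriate smallest (resp.\ largest) admissible partial quotient, or, if that one is ruled out by a forbidden word, the next best one — until both tails become periodic; this yields an extension $\underline{w}$ with $\lambda_0(w) \ge \lambda_0(\underline{w})$, and it then remains to check that the resulting quantity exceeds $3.938835$, resp.\ $3.938824$. For the upper bounds (ii) and (iv) I would do the same with the value-\emph{maximising} admissible continuation, obtaining $\overline{w}$ with $\lambda_0(w) \le \lambda_0(\overline{w}) < 3.938751$, resp.\ $< 3.9387718$, in the exact spirit of the bound $\lambda_0(233^*111) \le \lambda_0(21233^*11132) < 3.93676$ of Lemma~\ref{l:M-L4}(ii). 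The comparisons within each item between words differing in a single partial quotient — for instance $\lambda_0(1111333^*1112121) > \lambda_0(1111333^*1112122)$ or $\lambda_0(3211333^*1112121) > \lambda_0(2211333^*1112121)$ — are immediate from the same parity rule, while the few comparisons between words differing in two positions, such as $\lambda_0(1211333^*1112123) < \lambda_0(2211333^*1112122)$ in (ii), follow by pitting the upper bound for the smaller side against the lower bound for the larger side.

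The crux is the sharpness required in (ii), and above all in (iv): the thresholds $3.938751$ and $3.9387718$ lie only about $2.5\times 10^{-5}$ and $4\times 10^{-6}$ below the Markov value $3.93877624198\dots$ one is tracking, whereas completing the seven prescribed partial quotients on each side of a word such as $3211333^*1112122$ by a naive period-one or period-two tail leaves an uncertainty of order $10^{-4}$ in $\lambda_0$. One must therefore chase the value-extremising admissible continuation several partial quotients further, pruning the tree of admissible tails with the forbidden words at each step, until the remaining uncertainty drops below the relevant gap, and then evaluate the associated quadratic surds to eight or nine significant figures. The genuinely delicate point is to identify the value-extremising admissible tail correctly: the naive optimiser $\overline{1,3}$ contains both $131$ and $313$, a tail running through $3,2,3,1,1$ contains $32311$, and so on, so the greedy choice routinely has to be corrected, and a single overlooked forbidden word could make a purported ``worst case'' spuriously large and invalidate an upper bound. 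This is exactly the bookkeeping carried out by the search algorithm of Appendix~\ref{app:alg} (and it parallels the analysis of the region near $3.7096$ in~\cite{MM20}); once the correct extremal completions are in hand, each of (i)--(iv) reduces to a finite, if lengthy, continued-fraction computation.
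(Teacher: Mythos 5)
Your proposal matches the paper's (implicit) method exactly: the paper states this lemma without proof, the intended justification being precisely the standard cylinder/monotonicity estimates for continued fractions, with the value-extremising admissible tails pruned by the previously established forbidden words and evaluated as quadratic surds — the bookkeeping automated by the algorithm of Appendix~\ref{app:alg}. You also correctly isolate the only delicate point, namely that the small margins in (ii) and (iv) force the extremal continuations to be followed several partial quotients beyond the prescribed prefix; what remains is simply to execute the finite computation.
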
 

By the previous lemma (and after recalling that $312$, $22311$ and $32311$ are forbidden, cf. Lemmas \ref{l:M-L2} and \ref{l:M-L3} (i)), it suffices to analyse the extensions to the left of $1111333^*1112123$, $1211333^*1112121$, $2311333^*1112121$, $3311333^*1112121$, i.e., 
\begin{itemize}
\item $11111333^*1112123$, $21111333^*1112123$, $31111333^*1112123$
\item $11211333^*1112121$, $21211333^*1112121$
\item $12311333^*1112121$ 
\item $13311333^*1112121$, $23311333^*1112121$, $33311333^*1112121$
\end{itemize} 

\begin{lemma}\phantomsection\label{l:M-L13} 
\begin{itemize}
\item[(i)] $\lambda_0(11111333^*1112123) > 3.9388049$ 
\item[(ii)] $\lambda_0(11211333^*1112121) > 3.9387855$ 
\end{itemize}
\end{lemma}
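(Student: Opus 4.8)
The plan is to handle both words with the lower-bound device that is used repeatedly in this section. Write
\[
\lambda_0(x) \;=\; \lambda^-(x) + \lambda^+(x), \qquad \lambda^-(x) := [x_0; x_{-1}, x_{-2}, \dots], \quad \lambda^+(x) := [0; x_1, x_2, \dots],
\]
and note that, once the displayed finite central block is fixed, $\lambda^-(x)$ depends only on the digits at negative indices and $\lambda^+(x)$ only on the digits at positive indices, these two one-sided continuations varying independently over admissible strings. Hence the infimum of $\lambda_0$ over all admissible bi-infinite extensions equals $(\inf\lambda^-)+(\inf\lambda^+)$, and it suffices to bound each of these infima from below and to check that the sum exceeds the stated threshold. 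Each infimum is controlled by the alternating monotonicity of continued fractions: with $c_0,\dots,c_{k-1}$ fixed, $[c_0;c_1,\dots,c_{k-1},\theta]$ is increasing in $\theta$ when $k$ is even and decreasing when $k$ is odd, so the continuation past the displayed block is to be replaced by the smallest (or the largest) admissible continued fraction, in the direction dictated by the length of that block.

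First I would run this for $11111333^*1112123$. Parsing the word, $\lambda^-(x)=[3;3,3,1,1,1,1,1,\theta^-]$ and $\lambda^+(x)=[0;1,1,1,2,1,2,3,\theta^+]$, and in both cases the displayed block has the parity making $\lambda_0$ increasing in the corresponding continuation; so one wants $\theta^-$ and $\theta^+$ as small as admissibility permits. This is where the preceding lemmas are used: since the blocks $131$, $313$, $312$, $213$, $22311$, $32311$ and $3111333$ are forbidden (Lemmas~\ref{l:M-L1}--\ref{l:M-L6}), a smallest admissible continuation is tightly constrained. For $\theta^+$, the digit $x_7=3$ together with the prohibition of $313$ and $312$ forces $x_9=1$, after which the prohibitions of $131$, $213$ and $32311$ push the greedy continuation, following a bounded number of backtracking corrections, into an eventually period-$2$ tail $\overline{12}$; the minimising $\theta^-$ is pinned down analogously. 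With the extremal continuations in hand, $\lambda^-(x)$ and $\lambda^+(x)$ are explicit quadratic surds, and I would finish by evaluating their sum numerically and checking that it is $>3.9388049$.

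Part (ii) is the identical argument applied to $11211333^*1112121$, where $\lambda^-(x)=[3;3,3,1,1,2,1,1,\theta^-]$ and $\lambda^+(x)=[0;1,1,1,2,1,2,1,\theta^+]$, again with both continuations to be taken minimal, and where the bound to be cleared is the slightly smaller $3.9387855$.

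The step I expect to be the main obstacle is not conceptual but the precise identification of the genuinely extremal admissible continuations: the rule ``take the smallest admissible next digit, then the largest, then the smallest, and so on'' repeatedly runs into one of the forbidden blocks and must be corrected by a short backtracking search on each side, carried out until the tail becomes periodic. Getting this exactly right matters because the two thresholds sit only about $3\times10^{-5}$ and $9\times10^{-6}$ above the target $\lambda_0(\overline{11121211333^*})=3.93877624198\dots$, so cruder two-sided bounds for continued fractions with partial quotients in $\{1,2,3\}$ would not suffice; this is exactly why the central words have been extended to their present length before the estimate is attempted, each extra pinned digit raising the lower bound just enough to clear the threshold.
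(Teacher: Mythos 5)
Your overall device is the right one, and it is surely what the authors do: split $\lambda_0$ into the two independent one-sided continued fractions, use the alternating monotonicity to replace each free continuation by its extremal admissible value, and evaluate the resulting quadratic surds. But there is a concrete error in the execution that, given how tight these bounds are, would sink the computation. The minimising continuation past a block of eight pinned digits $a_0,\dots,a_7$ is obtained by taking $a_8$ \emph{smallest}, $a_9$ \emph{largest}, $a_{10}$ smallest, and so on; over the unrestricted alphabet $\{1,2,3\}$ this is the tail $\overline{1,3}$ on both sides, not an ``eventually $\overline{12}$'' tail as you assert. With the tails $\overline{1,3}$ one finds, for (i), $\min\lambda^-=[3;3,3,1,1,1,1,1,\overline{1,3}]=\tfrac{314\zeta+195}{95\zeta+59}$ and $\min\lambda^+=[0;1,1,1,2,1,2,3,\overline{1,3}]=\tfrac{64\zeta+19}{101\zeta+30}$ with $\zeta=\tfrac{3+\sqrt{21}}{6}$, whose sum is $3.93880493\ldots$; for (ii) the analogous sum is $3.93878550\ldots$. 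The stated thresholds $3.9388049$ and $3.9387855$ are round-downs of exactly these numbers, exceeded by only about $3\times 10^{-8}$ and $6\times 10^{-9}$ respectively. So your estimate of the available slack ($3\times 10^{-5}$ and $9\times 10^{-6}$) is off by three orders of magnitude: you measured the distance from the thresholds to $\lambda_0(\overline{11121211333^*})$, which is what makes the lemma \emph{useful} downstream, not the distance from the thresholds to the infimum you must bound, which is what makes the lemma \emph{true}. With margins at the $10^{-8}$ scale, evaluating at a continuation that is not the genuine minimiser proves nothing.

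A second, smaller point: unlike Lemmas~\ref{l:M-L14}, \ref{l:M-L15}, etc., this lemma carries no ``if $\dots$ is forbidden'' hypothesis, so by the paper's conventions it is an unconditional bound over \emph{all} extensions in $\{1,2,3\}^{\mathbb{Z}}$ -- and indeed, as the numbers above show, the unconstrained minimum already clears the thresholds, so no forbidden words are needed. Your plan of importing the forbidden strings from Lemmas~\ref{l:M-L1}--\ref{l:M-L6} would only establish a conditional variant (which does happen to suffice where the lemma is applied), while simultaneously changing which tail is extremal and hence every digit of the final numerical check. Drop the admissibility constraints, take the tails $\overline{1,3}$, and the two evaluations close the proof.
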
 

By the previous lemma (and after recalling that $213$ is forbidden), it suffices to analyse the extensions to the right of  $21111333^*1112123$, $31111333^*1112123$, $21211333^*1112121$, $12311333^*1112121$, $13311333^*1112121$, $23311333^*1112121$, $33311333^*1112121$, i.e., 
\begin{itemize}
\item $21111333^*11121231$, $21111333^*11121232$, $21111333^*11121233$
\item $31111333^*11121231$, $31111333^*11121232$, $31111333^*11121233$
\item $21211333^*11121211$, $21211333^*11121212$ 
\item $12311333^*11121211$, $12311333^*11121212$
\item $13311333^*11121211$, $13311333^*11121212$ 
\item $23311333^*11121211$, $23311333^*11121212$ 
\item $33311333^*11121211$, $33311333^*11121212$  
\end{itemize} 

\begin{lemma}\phantomsection\label{l:M-L14} 
\begin{itemize}
\item[(i)] if $312$ and $313$ are forbidden, then $\lambda_0(21111333^*1112123)\geq \lambda_0(21111333^*111212311) > 3.93877973$
\item[(ii)] if $312$ and $313$ are forbidden, then $\lambda_0(31111333^*11121231)\leq \lambda_0(31111333^*111212311)< 3.938775326$ 
\item[(iii)] if $131$ is forbidden, then $\lambda_0(31111333^*11121233)>\lambda_0(31111333^*11121232)\geq \lambda_0(231111333^*11121232)>3.9387807$ 
\item[(iv)] $\lambda_0(21211333^*11121212)>\lambda_0(3311333^*11121212)>3.938783$ 
\item[(v)] $\lambda_0(12311333^*11121211)<\lambda_0(3311333^*11121211)<3.9387521$
\end{itemize}
\end{lemma}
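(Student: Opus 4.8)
Each item of the lemma is a (chain of) inequalities between Markov values $\lambda_{0}(a)$ of finite words $a$ and explicit constants, to be read --- as throughout this section --- as holding for every bi-infinite admissible extension of $a$. The plan is to prove them all by the continued-fraction mechanism already used for Lemmas~\ref{l:M-L1}--\ref{l:M-L13}. For an extension $w=\ldots x_{-1}x_{0}^{*}x_{1}\ldots$ (so $x_{0}=3$), Perron's formula gives
\[\lambda_{0}(w)=[x_{0};x_{-1},x_{-2},\ldots]+[0;x_{1},x_{2},\ldots],\]
a sum of a ``backward tail'' and a ``forward tail'' depending on disjoint coordinates. First I would record the elementary parity-monotonicity of continued fractions: once a prefix is fixed, $[0;b_{1},\ldots,b_{k},c_{1},c_{2},\ldots]$ (and likewise the backward version) is monotone in $[c_{1};c_{2},\ldots]$, the sign being governed by the parity of $k$. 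Hence, over admissible continuations with digits in $\{1,2,3\}$ avoiding the forbidden blocks, each tail is extremised by an eventually periodic sequence of small period --- $\overline{1}$, $\overline{3}$, $\overline{13}$, $\overline{31}$, $\overline{12}$, $\overline{32}$, adjusted to dodge the forbidden blocks --- so every inequality in the statement reduces to comparing the relevant constant with a sum of two explicit quadratic surds, which can be evaluated to any desired precision.

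The hypotheses in (i)--(iii) (``$131$ forbidden'', ``$312$ and $313$ forbidden'') serve precisely to prune the admissible continuations, which is what produces the longer words on the right-hand sides. For each item I would: (a) on each side of $x_{0}^{*}$, identify the admissible continuation extremal for the inequality at hand, using the monotonicity together with the blocks $131$, $213$, $312$, $313$, $3111333$ and the other configurations ruled out in Lemmas~\ref{l:M-L1}--\ref{l:M-L6}; for example, with $312$ and $313$ forbidden a forward tail reaching $\ldots 31$ must continue with $1$, so in (ii) the extensions of $31111333^{*}11121231$ are exactly those of $31111333^{*}111212311$; with $131$ forbidden the left coordinate $x_{-8}$ extending $31111333^{*}11121232$ cannot be $1$, and the monotonicity makes $x_{-8}=2$ the extremal admissible choice, giving $231111333^{*}11121232$ in (iii); and similarly the right extension of $21111333^{*}1112123$ in (i) is pushed, by monotonicity and then by $312,313$ being forbidden, to $21111333^{*}111212311$; (b) when the claim reads $\lambda_{0}(A)\ge\lambda_{0}(B)$ (or $\le$) with $B$ extending $A$, check that $B$ records the extremal admissible completion of $A$ on the relevant side --- the monotonicity singles out a digit which is either admissible (and then appears in $B$) or forbidden (and then the next digit it permits is the one in $B$) --- so that any extension of $A$ not extending $B$ is strictly less extreme and the comparison follows; (c) evaluate the two quadratic surds coming from the extremal completions and compare the sum with the stated constant. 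Items (iv) and (v) have the plain form $\lambda_{0}(A)\ge\lambda_{0}(B)\ge v$ (resp. with all inequalities reversed) and need only (a) and (c).

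The only real difficulty is precision and bookkeeping rather than a new idea: the constants $3.93877973$, $3.938775326$, $3.9387807$, $3.938783$, $3.9387521$ separate the Markov values in question from the target value $3.93877624198\ldots$ only in the seventh or eighth decimal digit, so the quadratic surds must be computed to high accuracy and, more delicately, the extremal admissible continuation on each side must be pinned down exactly --- one wrong digit in a guessed periodic tail, or one overlooked forbidden block, would flip an inequality at that scale. I would therefore carry out each of the finitely many estimates with exact quadratic-irrational (or certified interval) arithmetic, as in the analogous estimates of \cite{MM20}, re-checking at every branch that the blocks forbidden by Lemmas~\ref{l:M-L1}, \ref{l:M-L2}, \ref{l:M-L3}, and \ref{l:M-L6}(i) are respected by the continuations chosen.
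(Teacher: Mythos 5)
Your proposal is correct and matches the intended argument: the paper states this lemma (like Lemmas~\ref{l:M-L1}--\ref{l:M-L13}) without proof, the implicit justification being exactly the parity-monotonicity of continued-fraction tails, the pruning of continuations by the forbidden blocks (which is why, e.g., $\ldots31$ must continue with $1$ in (i)--(ii) and $x_{-8}=2$ is extremal in (iii)), and a high-precision comparison of the resulting quadratic surds, as in \cite{MM20}. Your identification of the extremal admissible continuations in each item is consistent with the stated chains of inequalities, so the only remaining work is the certified arithmetic you already flag.
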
 

By the previous lemma (and after recalling that $312$ and $1123113$ are forbidden, cf. Lemmas \ref{l:M-L2} and \ref{l:M-L5} (iii)), it suffices to analyse the extensions to the left of $21211333^*11121211$, $12311333^*11121212$, i.e.,  $121211333^*11121211$, $221211333^*11121211$, $321211333^*11121211$, $212311333^*11121212$. 

\begin{lemma}\label{l:M-L15} If $131$ is forbidden, then $\lambda_0(321211333^*11121211)>\lambda_0(221211333^*11121211)\geq \lambda_0(221211333^*1112121132)>3.9387772$
\end{lemma}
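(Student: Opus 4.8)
The plan is to prove this exactly as in the preceding lemmas: a finite estimate on continued fractions obtained by extending the given word on both sides. Recall the Perron characterisation $\lambda_0(\dots x_{-1}x_0^*x_1\dots) = [x_0; x_{-1}, x_{-2},\dots] + [0; x_1, x_2,\dots]$, and that for a finite word $a$, the inequality $\lambda_0(a) > v$ means $\lambda_0(w) > v$ for every bi-infinite extension $w$ of $a$. Since the word $221211333^*11121211$ is being extended, and we are told $131$ is forbidden, the first step is to understand the constraint: the $11121211$ tail to the right of $x_0^*$ cannot be followed by a $3$ immediately after the final $1$ in a way that creates $131$ — wait, more precisely, the subword $\dots 211\dots$ ends the displayed right-portion, and an extension $\dots 2113$ would be fine, but a future $131$ anywhere is forbidden; the relevant local consequence is that the symbol after $\dots 11121211$ is constrained so that the one-sided continued fraction $[0; 1,1,1,2,1,2,1,1,?,\dots]$ is bounded below by $[0;1,1,1,2,1,2,1,1,3,2,\dots]$-type value.

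Concretely, I would argue as follows. For the \emph{left} side, starting from $x_0 = 3$ and reading leftwards we have the word $\dots 221211333$, i.e. $x_{-1}x_{-2}\dots = 3,3,1,1,2,1,2,2,\dots$. Compute $[x_0; x_{-1}, x_{-2}, \dots] = [3; 3,3,1,1,2,1,2,2,\dots]$ and bound it below by taking the worst-case (smallest) continuation; because alternating-position monotonicity of continued fractions means the bound from the left is obtained by an explicit finite truncation plus the extremal tail in $\{1,2,3\}$. For the \emph{right} side, $[0; x_1, x_2, \dots] = [0; 1,1,1,2,1,2,1,1,\dots]$, and here the forbidden-$131$ hypothesis forces that we cannot continue with a pattern that would make this continued fraction too small; the claimed optimal continuation is $\dots 11121211\,32$, giving $[0; 1,1,1,2,1,2,1,1,3,2,\dots]$, and one checks that any legal extension produces a value $\ge [0;1,1,1,2,1,2,1,1,3,2,\overline{1}]$ or similar extremal periodic tail. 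Adding the two one-sided bounds and verifying the sum exceeds $3.9387772$ is then a finite continued-fraction computation, carried to enough digits (the competing quantity in Theorem~\ref{t:main1} sits around $3.93877624\ldots$, so one needs roughly eight to nine decimal places of accuracy, hence the continued fractions must be truncated deep enough — say $15$–$20$ terms on each side — before the monotone tail bound is applied).

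The key tool throughout is the standard lemma on continued fractions: if $\beta = [0; b_1, b_2, \dots]$ and $\beta' = [0; b_1', b_2', \dots]$ agree up to index $k-1$ and differ at index $k$, then $\beta < \beta'$ iff $(-1)^k(b_k - b_k') < 0$; combined with the fact that all digits lie in $\{1,2,3\}$, this lets one replace any unknown tail by its extremal value ($\overline{1}$ or $\overline{3}$ in alternating blocks, or more simply $[0;\overline{1}]$, $[0;\overline{3}]$, etc., whichever is extremal at the first free position). One applies this on the right side at the position right after the forced $\dots 3\,2$ (using that $131$ forbidden pins down the $3$ there, and hence the following digit is free in $\{1,2,3\}$ but the worst case is determined), and on the left side at the position right after $\dots 221211333$ where the extension is completely free. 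The inequality $\lambda_0(221211333^*11121211) \ge \lambda_0(221211333^*1112121132)$ is itself an instance of this monotonicity: appending $32$ on the right strictly decreases (or the hypothesis forces it not to increase beyond) the value, making it the relevant lower bound, and then the final strict inequality $> 3.9387772$ follows from evaluating $[3;3,3,1,1,2,1,2,2,\langle\text{extremal}\rangle] + [0;1,1,1,2,1,2,1,1,3,2,\langle\text{extremal}\rangle]$.

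The main obstacle is purely bookkeeping precision rather than conceptual: because the target threshold $3.9387772$ is extremely close to the Markov values of nearby words (everything in this section lives within $10^{-7}$ of $3.9387762\ldots$), the continued-fraction truncations must be pushed far enough that the error from replacing the tail by its extremal value is smaller than the gap to the threshold — I would expect to need on the order of a dozen or more partial quotients on each side, and to be careful that the hypothesis "$131$ forbidden" is correctly translated into which digit after the right-hand $3$ is actually admissible (so that $\dots 1132$ and not some smaller continuation is the true extremal legal word). Once the admissible extremal word is correctly identified, the remaining verification is a finite, deterministic rational computation of the kind already carried out repeatedly in Lemmas~\ref{l:M-L1}–\ref{l:M-L14}, and it closes the case analysis by showing this branch contributes Markov values strictly above the level where the sought-after element of $M\setminus L$ near $3.938$ can appear.
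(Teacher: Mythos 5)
Your proposal is correct and follows essentially the same (implicit) argument as the paper: this lemma, like the others in Section~\ref{s:M-L}, is a finite continued-fraction verification in which the comparison $\lambda_0(3\ldots)>\lambda_0(2\ldots)$ comes from the first-differing-digit rule at the even position $x_{-8}$, the hypothesis that $131$ is forbidden excludes the digit $1$ after the extremal choice $x_9=3$ following $\ldots 11121211$ (since $x_8x_9x_{10}=131$), forcing the minimizing legal continuation $\ldots 1132$, and the final bound $>3.9387772$ is then a rational computation with extremal tails on both sides. One small correction: your parenthetical that ``$131$ forbidden pins down the $3$'' reverses the logic --- the $3$ at the odd position $x_9$ is simply the value-minimizing choice, and the hypothesis only rules out the subsequent $1$ --- but you state the mechanism correctly elsewhere in the write-up.
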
 

By the previous lemma, it suffices to analyse the extensions to the right of $121211333^*11121211$,  $212311333^*11121212$, i.e., $121211333^*111212111$, $121211333^*111212112$, $121211333^*111212113$, $212311333^*111212121$, $212311333^*111212122$, $212311333^*111212123$.

\begin{lemma}\phantomsection\label{l:M-L16}
\begin{itemize}
\item[(i)] $\lambda_0(121211333^*111212111)>\lambda_0(121211333^*111212112)>3.9387821$ 
\item[(ii)] if $312$ and $313$ are forbidden, then $\lambda_0(212311333^*11121212)\geq \lambda_0(212311333^*11121212311)>3.938776505$ 
\end{itemize}
\end{lemma}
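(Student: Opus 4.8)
The plan is to run the same continued-fraction bookkeeping that drives all of Section~\ref{s:M-L}. By Perron's formula, any bi-infinite extension $w$ of a finite word written around $x_0^*$ satisfies $\lambda_0(w) = [a_0; a_{-1}, a_{-2}, \dots] + [0; a_1, a_2, \dots]$, the value $[0; b_1, b_2, \dots]$ is strictly decreasing in $b_k$ for odd $k$ and strictly increasing for even $k$, and the dependence is ``lexicographic'': since $[1; c_1, c_2, \dots] < 2 < [2; c_1, c_2, \dots]$, one has $[0; b_1, \dots, b_{2j}, 1, (\text{anything})] > [0; b_1, \dots, b_{2j}, 2, (\text{anything})]$, and symmetrically at odd slots. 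So, to bound $\lambda_0$ below (resp.\ above) over \emph{all} admissible extensions, it suffices to read off digit by digit the extension making each continued fraction as small (resp.\ large) as possible, subject only to remaining in $\{1,2,3\}^{\Z}$ and to the local configurations already excluded by Lemmas~\ref{l:M-L1}--\ref{l:M-L6} (recall that, since $m(x)=\sup_n\lambda_n(x)$ and $\lambda_0$ is invariant under transposition, each of those lemmas forbids a substring, e.g. $131$, $213$, $312$, $313$ as substrings, besides the longer exclusions such as $1123113$, $3111333$, $3331113$).

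For part~(i): the words $121211333^*111212111$ and $121211333^*111212112$ have identical left parts and agree in the first eight right-hand digits $11121211$, differing only at $a_9$, which occupies an odd slot of $[0; a_1, a_2, \dots]$; the lexicographic principle immediately gives $\lambda_0(121211333^*111212111) > \lambda_0(121211333^*111212112)$ for all extensions at once. For the numerical bound I would pin down the minimising extension on each side: on the left, $[3; 3,3,1,1,2,1,2,1,\dots]$ is smallest when the unknown tail is chosen greedily (largest admissible digit in odd slots, smallest in even slots), where the choice $3$ is repeatedly blocked by the forbidden $312$; on the right, $[0; 1,1,1,2,1,2,1,1,2,\dots]$ is treated likewise, with $3$ blocked by the forbidden $213$. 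Adding the two resulting (finitely computable) continued fractions and checking the sum exceeds $3.9387821$ finishes~(i).

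For part~(ii): assuming $312$ and $313$ are forbidden, I would first show that minimising $\lambda_0$ over extensions of $212311333^*11121212$ forces the right continuation to begin $11121212311\cdots$. At $a_9$ (odd slot) the greedy choice is $3$, which is admissible ($123$ and $231$ are not forbidden, and the vicinity $2311$ of this $3$ is the admissible type $23^*11$); at $a_{10}$ (even slot) it is $1$; and at $a_{11}$ (odd slot) the greedy choices $3$ and then $2$ are \emph{both} blocked, since $a_9a_{10}a_{11}=313$ and $a_9a_{10}a_{11}=312$ are forbidden, so $a_{11}=1$ is forced. This is exactly the inequality $\lambda_0(212311333^*11121212) \ge \lambda_0(212311333^*11121212311)$. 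It then remains to evaluate the minimising extension of $212311333^*11121212311$ --- extremal (again greedily, modulo the forbidden substrings) on the left tail $[3; 3,3,1,1,3,2,1,2,\dots]$ and on the right tail $[0; 1,1,1,2,1,2,1,2,3,1,1,\dots]$ --- and to verify the resulting value exceeds $3.938776505$.

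I expect the only genuine difficulty to be bookkeeping rather than conceptual: in each case one must reconcile the greedy digit-by-digit optimisation of the two continued fractions with the ever-growing list of forbidden configurations so as to isolate the truly \emph{admissible} extremal extension, and then carry the continued-fraction arithmetic far enough that the strict inequalities $>3.9387821$ and $>3.938776505$ are decided unambiguously. Since these bounds separate the relevant Markov values only in the eleventh or twelfth significant figure, the convergents should be evaluated with exact rational arithmetic.
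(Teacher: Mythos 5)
Your proposal is correct and matches the paper's (implicit) method: Lemma~\ref{l:M-L16}, like the other lemmas of Section~\ref{s:M-L}, is stated without a written proof as a routine continued-fraction verification, and the intended argument is exactly the alternating/lexicographic monotonicity plus greedy extremal extensions subject to the previously established forbidden strings that you describe — in particular, your forcing of the continuation $311$ after $\ldots 1212$ via the forbidden $313$ and $312$ is precisely the content of the inequality $\lambda_0(212311333^*11121212)\geq \lambda_0(212311333^*11121212311)$ in (ii). The only nuance is that the chained inequality in (i) should be read as comparing the \emph{minimal} admissible extensions of the two words (which share the same minimizing left tail, so the comparison reduces to the right tails where your lexicographic principle applies), rather than every extension of one against every extension of the other.
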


By the previous lemma (and after recalling that $312$ is forbidden), it suffices to analyse the extensions to the left of  $121211333^*111212113$, i.e., $1121211333^*111212113$, $2121211333^*111212113$. 

\begin{lemma}\label{l:M-L17} $\lambda_0(2121211333^*111212113)< 3.93877609$
\end{lemma}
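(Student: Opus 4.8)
The plan is to turn the claim into two decoupled one-variable estimates on regular continued fractions. Using $\lambda_0(x) = [x_0;x_{-1},x_{-2},\dots] + [0;x_1,x_2,\dots]$ and substituting the digits forced by the word $2121211333^*111212113$, every bi-infinite extension $x$ satisfies
\[
\lambda_0(x) = \underbrace{[3;3,3,1,1,2,1,2,1,2,\,x_{-10},x_{-11},\dots]}_{=:A} + \underbrace{[0;1,1,1,2,1,2,1,1,3,\,x_{10},x_{11},\dots]}_{=:B},
\]
where the only freedom is in the tails $x_{-10},x_{-11},\dots$ and $x_{10},x_{11},\dots$ (subject to the digit constraints and the forbidden strings collected so far). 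First I would record the convergents $[3;3,3,1,1,2,1,2]=\tfrac{737}{223}$, $[3;3,3,1,1,2,1,2,1]=\tfrac{1008}{305}$, $[0;1,1,1,2,1,2,1]=\tfrac{26}{41}$, $[0;1,1,1,2,1,2,1,1]=\tfrac{45}{71}$, so that
\[
A = \frac{1008\,s+737}{305\,s+223},\qquad B = \frac{45\,r+26}{71\,r+41},
\]
with $s := [x_{-9};x_{-10},\dots] = [2;x_{-10},x_{-11},\dots]$ and $r := [x_9;x_{10},\dots] = [3;x_{10},x_{11},\dots]$. Since $\partial_s A = -(305 s+223)^{-2}<0$ and $\partial_r B = -(71 r+41)^{-2}<0$, both summands increase as the ``far tails'' $[x_{-10};x_{-11},\dots]$ and $[x_{10};x_{11},\dots]$ grow, so it suffices to bound $A$ and $B$ at the largest admissible values of these tails.

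Next I would bound the far tails. The crude estimate $[x_{-10};x_{-11},\dots],\,[x_{10};x_{11},\dots]\le[3;\overline{1,3}]=\tfrac{3+\sqrt{21}}{2}$ (the supremum of a continued fraction with digits in $\{1,2,3\}$) gives $s\ge 2+\tfrac{\sqrt{21}-3}{6}$ and $r\ge 3+\tfrac{\sqrt{21}-3}{6}$; plugging these into the two Möbius expressions above and adding already produces a number just below $3.93877609$. A slightly sharper and cleaner variant is to run the greedy maximisation of the far tails digit by digit, using the forbidden strings $131$ (Lemma~\ref{l:M-L1}), $213$ and $313$ (Lemma~\ref{l:M-L2}), $312$ (Lemma~\ref{l:M-L5}), $22311$ and $32311$ (Lemma~\ref{l:M-L3}) together with their reverses: one checks that the maximisation is forced into the eventually periodic tail $[3;1,1,1,\overline{3,2,1,1}]$ in both cases, producing explicit quadratic surds for the maximal $s$ and $r$, and hence an explicit algebraic upper bound for $\lambda_0(x)$ that one verifies is $<3.93877609$.

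The main obstacle is purely quantitative: the inequality is essentially sharp, the supremum of $\lambda_0$ over admissible extensions being about $3.9387756$, i.e.\ only of order $10^{-7}$ below the stated bound. In particular, the bound coming from the order-nine convergents alone, $A<\tfrac{2753}{833}$ and $B<\tfrac{161}{254}$, is not sufficient (their sum is $\approx 3.9387803$), so one must genuinely descend one further level of the continued fractions and carry enough decimal precision throughout. Apart from this, the argument is routine: the two functions $A$ and $B$ are explicit linear-fractional maps, their monotonicity is immediate, and the forbidden-string bookkeeping is of exactly the same flavour as that used repeatedly earlier in this section.
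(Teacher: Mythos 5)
Your computation is correct and is exactly the standard convergent-plus-monotonicity argument that underlies all the unproved numerical lemmas in this section; the decomposition, the convergents $\tfrac{737}{223},\tfrac{1008}{305},\tfrac{26}{41},\tfrac{45}{71}$, and the monotonicity analysis all check out. The one delicate point — that the crude tail bound $[3;\overline{1,3}]=\tfrac{3+\sqrt{21}}{2}$ already suffices — is true but razor-thin: with $s=2+\tfrac{\sqrt{21}-3}{6}$ and $r=3+\tfrac{\sqrt{21}-3}{6}$ one gets exactly $\tfrac{48639762-6\sqrt{21}}{14717364}+\tfrac{1022346-6\sqrt{21}}{1612860}=3.9387760822\ldots<3.93877609$, a margin of only about $7\times 10^{-9}$, so the precision caveat you raise is genuinely necessary (and your optional greedy refinement of the tails would give a more comfortable margin).
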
 

By the previous lemma (and after recalling that $131$ is forbidden), it suffices to analyse the extensions to the right of $1121211333^*111212113$, i.e., $1121211333^*1112121132$, $1121211333^*1112121133$.

\begin{lemma}\label{l:M-L18} If $131$ and $211321$ are forbidden\footnote{Compare with Lemma \ref{l:M-L5} (i)}, then $\lambda_0(1121211333^*1112121132)\leq \lambda_0(231121211333^*11121211322)<3.938775922$
\end{lemma}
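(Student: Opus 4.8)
The plan is to argue exactly as in Lemmas~\ref{l:M-L1}--\ref{l:M-L17}: one uses the monotonicity of the continued fraction algorithm together with the two forbidden blocks $131$ and $211321$. Write $\lambda_0(\underline{x})=[x_0;x_{-1},x_{-2},\dots]+[0;x_1,x_2,\dots]$; the first summand depends only on $x_0$ and the left tail, the second only on the right tail, and $x_0$ together with the digits $x_{-9}\cdots x_{10}$ are pinned by the core word $1121211333^*1112121132$. Hence it suffices to maximise the two summands separately over admissible extensions and add the two extrema.

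For the right tail the digits are $x_1\cdots x_{10}=1112121132$; appending $x_{11}=1$ produces $x_6x_7x_8x_9x_{10}x_{11}=211321$, which is forbidden, so $x_{11}\in\{2,3\}$. In $[0;x_1,x_2,\dots]$ the slot $x_{11}$ sits after ten entries, so this quantity is decreasing in the tail value $[x_{11};x_{12},\dots]$; consequently the cylinder corresponding to $x_{11}=3$ lies entirely to the left of the one for $x_{11}=2$, and every admissible extension with $x_{11}=3$ has $\lambda_0$ no larger than some admissible extension with $x_{11}=2$, i.e. we may restrict to extensions of $\dots 11121211322$. Iterating the monotonicity digit by digit — at each step choosing the extremal admissible digit ($3$ at an even slot, $1$ at an odd slot) and, whenever that choice would create $131$ or $211321$, passing to the next admissible digit and continuing — identifies the extremal right extension, which turns out to be eventually periodic, so its right‑tail value is an explicit quadratic surd.

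For the left tail, reading $x_0x_{-1}x_{-2}\cdots=3\,3\,3\,1\,1\,2\,1\,2\,1\,1\cdots$, the continued fraction $[x_0;x_{-1},x_{-2},\dots]$ is increasing in its even‑indexed entries, hence maximised by $x_{-10}=3$; then $x_{-11}=1$ is excluded since $x_{-11}x_{-10}x_{-9}=131$, which forces $x_{-11}=2$ — exactly the digit appearing in $231121211333^*$. Iterating the monotonicity as above produces the (again eventually periodic) extremal left extension. Combining the two analyses yields the first inequality of the lemma: every admissible extension of $1121211333^*1112121132$ has $\lambda_0$ bounded by that of an admissible extension of $231121211333^*11121211322$. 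For the numerical bound one then evaluates the two extremal continued‑fraction tails, truncating each at a depth where the discarded remainder is provably smaller than the remaining margin to $3.938775922$; since this value already differs from the target Markov value $3.9387762419\dots$ in the sixth decimal, a truncation of the order of twenty partial quotients is ample.

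The only real difficulty is book‑keeping, and it has two facets. First, the greedy/monotonicity reduction must be executed with care: inserting the ``extremal'' digit may create $131$ or $211321$ together with digits further along, so one has to check at each branching that the forbidden blocks are respected, and, when they are not, verify that replacing the extremal digit by the next admissible one still furnishes a valid upper bound for all competing extensions (this is where the nesting of the continued‑fraction cylinders is used). Second, since $131$ and $211321$ do not by themselves preclude long runs of $1$'s or $3$'s, one must genuinely pin down the eventually periodic extremal sequences before the truncation‑and‑estimate step can be trusted to the roughly nine significant digits needed to separate from $3.938775922$. Neither facet involves any idea beyond those already present in Lemmas~\ref{l:M-L1}--\ref{l:M-L17}.
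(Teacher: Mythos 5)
Your argument is correct and is precisely the (unwritten) computation behind this lemma: split $\lambda_0$ into the two continued-fraction tails, use the forbidden words $131$ and $211321$ together with the alternating monotonicity of partial quotients to force $x_{11}=2$, $x_{-10}=3$, $x_{-11}=2$ (yielding the word $231121211333^*11121211322$), and then bound the two remaining tails from above. The only caution is that the final margin is far tighter than your ``sixth decimal'' remark suggests --- the supremum over admissible extensions is approximately $3.9387759217$, within about $10^{-9}$ of the stated bound $3.938775922$ --- so the last step genuinely requires the exact evaluation (as quadratic surds, e.g.\ bounding the unknown tails by $[0;\overline{3,1}]$) that you describe, not a loose truncation estimate.
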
 

By the previous lemma, we are led to investigate the extensions of $1121211333^*1112121133$. More concretely, the following statement is an immediate corollary of our discussions so far: 
\begin{corollary}\label{c:M-L1} Let $x\in\{1,2,3\}^{\mathbb{Z}}$ be a sequence such that $3.93877609 < m(x)=\lambda_0(x) < 3.938776505$. Then, 
$$\dots x_{-1} x_0^* x_1\dots = \dots 1121211333^*1112121133 \dots$$
\end{corollary}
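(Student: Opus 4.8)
The plan is to extract this statement purely as a bookkeeping consequence of Lemmas \ref{l:M-L1}--\ref{l:M-L18}, which together implement an exhaustive case analysis on the block of symbols surrounding the $0$th position of $x$. The guiding principle is that $m(x)=\lambda_0(x)$ (so the supremum defining the Markov value is attained at the $0$th position, after possibly relabelling), and that for any finite subword $a$ appearing around $x_0^*$, the convention introduced just before Subsection \ref{s:M-L}.1 gives $\lambda_0(a)\le\lambda_0(x)$ if $a$ sits at the $0$th position, while $\lambda_n(x)\le\lambda_0(x)=m(x)$ forces upper bounds on all windows seen elsewhere. Thus a lemma of the form ``$\lambda_0(b)>3.938776505$'' rules out $b$ as a central window (it would push $m(x)$ above the allowed range), and conversely certain lower/upper bound lemmas rule out windows by the constraint $m(x)<3.938776505$ combined with the forbidden-string deductions that have accumulated.

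The key steps, in order, are simply to replay the reduction tree already laid out in the text. First I would record that $x_0=3$ and, using Lemma \ref{l:M-L1}, eliminate the vicinities $13^*1$ (value $>4.11>3.938776505$) and $23^*2,33^*2,33^*3$ (values $<3.884<3.93877609$), leaving $23^*1$ and $33^*1$ up to transposition. Then Lemma \ref{l:M-L2} forbids $312$ and $313$ (equivalently kills the right extensions $3^*12,3^*13$), so the surviving right extensions are $23^*11$ and $33^*11$; Lemma \ref{l:M-L3} then kills $223^*11,323^*11$ (too large) and $133^*11$ (too small), leaving $123^*11,233^*11,333^*11$. Continuing, Lemmas \ref{l:M-L4}--\ref{l:M-L6} peel off $123^*111,123^*112,233^*11\ast,333^*11\ast,333^*1113,\ldots$ using the growing list of forbidden strings ($131,312,3111333,\ldots$), until one reaches the window $11333^*1112$, and then Lemmas \ref{l:M-L8}--\ref{l:M-L18} refine this window digit by digit on both sides: at each stage every one of the nine (or six, or three) candidate extensions is either shown to have $\lambda_0>3.938776505$ or $\lambda_0<3.93877609$, or is eliminated because it contains a previously forbidden substring, or survives. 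I would present this as a single cascading argument, citing the relevant lemma at each branch, and check that the unique branch that is never eliminated is exactly the window $\ldots 1121211333^*1112121133\ldots$ claimed in the corollary.

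The main obstacle is not mathematical depth but completeness of the bookkeeping: one must verify that the list of nine extensions considered at each node of the tree is genuinely exhaustive (every admissible letter in $\{1,2,3\}$ on the relevant side, minus those already forbidden), that the forbidden-string deductions are applied consistently (e.g.\ that $131$, $312$, $313$, $3111333$, $1123113$, $211321$, $22311$, $32311$ are invoked only after the lemma that establishes them), and that the numerical thresholds $3.93877609$ and $3.938776505$ appearing in the hypothesis are compatible with every inequality cited — in particular that the ``squeezed'' estimates of type (ii) in Lemmas \ref{l:M-L4}, \ref{l:M-L5}, \ref{l:M-L14}--\ref{l:M-L18}, which only yield upper bounds under the standing forbidden-string hypotheses, are each strictly below $3.93877609$, and the ``large'' estimates are each strictly above $3.938776505$. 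Once this is checked, the corollary follows with no further computation, since after Lemma \ref{l:M-L18} the only window compatible with $3.93877609<m(x)<3.938776505$ is the stated one.
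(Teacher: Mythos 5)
Your proposal is correct and coincides with the paper's argument: the corollary is stated there as an ``immediate corollary of our discussions so far,'' i.e.\ precisely the cascading elimination tree of Lemmas~\ref{l:M-L1}--\ref{l:M-L18} together with the accumulated forbidden strings, which is what you propose to replay. Your closing checklist (exhaustiveness of each branching, correct ordering of the forbidden-string deductions, and compatibility of every numerical bound with the thresholds $3.93877609$ from Lemma~\ref{l:M-L17} and $3.938776505$ from Lemma~\ref{l:M-L16}~(ii)) is exactly the verification the paper leaves implicit.
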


\subsection{Self-replication} 

Our current goal is to describe the extensions of the string $1121211333^*1112121133$ leading to a Markov value strictly smaller than $3.93877624198\textcolor{red}{14}43$. 

For this sake, note that the extensions to the left of $1121211333^*1112121133$ are $11121211333^*1112121133$, $21121211333^*1112121133$, $31121211333^*1112121133$.

\begin{lemma}\label{l:M-L19} $\lambda_0(31121211333^*1112121133)>\lambda_0(21121211333^*1112121133)> 3.93877687$
\end{lemma}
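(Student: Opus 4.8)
The plan is to estimate $\lambda_0$ of the two strings $21121211333^*1112121133$ and $31121211333^*1112121133$ from below by exhibiting, in each case, an explicit position at which a large value $\lambda_n$ is forced, and to verify the chain of inequalities by direct continued-fraction comparison. Concretely, for each of the two strings I would look at the window centred several digits to the left of the marked position $x_0^*$ — in the neighbourhood of the sub-block $\dots 333^* 111 \dots$ appearing inside the common suffix $11333^*1112\dots$ — and compute $\lambda_n$ there. Recall the general principle used repeatedly above: a value $[c_0; c_{-1}, c_{-2}, \dots] + [0; c_1, c_2, \dots]$ is monotone in each digit according to the usual alternating pattern for continued fractions, so to get a \emph{lower} bound one replaces the unknown tail digits by the worst (smallest-value-producing) admissible continuations, and to get an \emph{upper} bound one replaces them by the best. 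Since only finitely many initial digits of each tail affect the first handful of decimal places, truncating both expansions after, say, 10--12 digits on each side already pins down $\lambda_n$ to well beyond the required precision $10^{-8}$.

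The key steps, in order, are: (1) Fix the string $w = 31121211333^*1112121133$ (with the $*$ still over the same $3$). Slide the base point to the left end of the block $\dots 11333 1112 \dots$, i.e.\ evaluate $\lambda_n(w)$ at the position whose left tail begins $3333^*\dots$ reading leftward — wait, more carefully: choose $n$ so that reading left from position $n$ one sees $\dots 333$ and reading right one sees $111\dots$; this is exactly a $33^*1$ vicinity, and one shows the corresponding $\lambda_n$ exceeds $3.93877687$ using that all left extensions of the fixed prefix $3\,1121211333$ and all right extensions of $1112121133$ only increase (resp.\ stay controlled) the value past the relevant digits. (2) Repeat verbatim for $w' = 21121211333^*1112121133$, obtaining the intermediate bound $\lambda_0(w') > 3.93877687$; since $w'$ differs from $w$ only in the leftmost digit ($2$ versus $3$), and that digit sits at an even distance from the evaluation point, monotonicity gives $\lambda_0(w) > \lambda_0(w')$ directly, yielding both displayed inequalities at once. (3) Collect the two estimates to conclude $\lambda_0(31121211333^*1112121133) > \lambda_0(21121211333^*1112121133) > 3.93877687$, which is the assertion.

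The main obstacle is purely one of numerical bookkeeping: the target threshold $3.93877687$ agrees with the ambient Markov value $3.93877624198\dots$ in the first six decimal digits, so the two continued-fraction sums must be computed and compared to roughly nine-digit accuracy, and one must be scrupulous that the prefix/suffix chosen to be held \emph{fixed} is genuinely long enough that no admissible extension can perturb the ninth digit — this is where an off-by-one in the parity of the evaluation position, or a too-short fixed block, would invalidate the monotonicity argument. In practice I would (as the authors evidently did) let the computer search of Section~\ref{s:M-L}'s algorithm supply the optimal evaluation position and the minimal fixed-block length, then certify the resulting finite-precision inequality by hand with interval arithmetic on the truncated convergents. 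No genuinely new idea is needed beyond the local-uniqueness technique already exercised in Lemmas~\ref{l:M-L1}--\ref{l:M-L18}; this lemma is simply the first step of the ``self-replication'' phase, ruling out the two left extensions of $1121211333^*1112121133$ that begin with $2$ or $3$ and thereby forcing the extension $11121211333^*1112121133$.
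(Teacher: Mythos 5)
Your proposal is correct and coincides with what the paper (implicitly) does: the lemma is a purely numerical fact, certified by evaluating the two continued-fraction sums at the marked position with worst-case admissible continuations on each side and invoking the alternating monotonicity in the digits, exactly as you describe; in particular your parity observation reduces both inequalities to a single nine-digit computation for the string beginning with $2$. The only caution is in your ``slide the base point'' phrasing: the statement concerns $\lambda_0$, i.e.\ the value at the asterisked position itself (which is already the $33^*1$ vicinity you identify), so no shift is available or needed — bounding some other $\lambda_n$ would only control $m(w)$, not $\lambda_0(w)$.
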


By the previous lemma, it suffices to analyse the extensions to the right of $11121211333^*1112121133$, i.e., $11121211333^*11121211331$, $11121211333^*11121211332$, $11121211333^*11121211333$. 

\begin{lemma}\label{l:M-L20} $\lambda_0(11121211333^*11121211331)>\lambda_0(11121211333^*11121211332)> 3.938776301$ 
\end{lemma}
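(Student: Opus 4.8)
The plan is to prove the two displayed inequalities in turn.

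The inequality $\lambda_0(11121211333^*11121211331) > \lambda_0(11121211333^*11121211332)$ is a matter of continued-fraction monotonicity. Denote the two central blocks by $S_1$ and $S_2$; they agree in every coordinate except the rightmost one, namely $x_{11}$, which equals $1$ for $S_1$ and $2$ for $S_2$. Since $\lambda_0(x)=[x_0;x_{-1},x_{-2},\dots]+[0;x_1,x_2,\dots]$ and the two blocks differ only to the right of the starred coordinate, the first summand is unaffected; in the second summand $x_{11}$ occurs as the $11$th partial quotient, i.e.\ at an odd position, and lowering an odd-position partial quotient strictly raises a continued fraction. Thus, comparing any extension of $S_1$ with the extension of $S_2$ that agrees with it in all free coordinates, the former has the strictly larger value of $\lambda_0$; taking the infimum over extensions (which is attained, by continuity of $\lambda_0$ on the compact set of extensions) yields the claimed strict inequality.

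For the lower bound $\lambda_0(11121211333^*11121211332)>3.938776301$, fix the block $S_2$ and write, for an arbitrary extension $x$,
\[\lambda_0(x)=A(t')+B(v),\qquad A(t')=[3;3,3,1,1,2,1,2,1,1,1,t'],\quad B(v)=[0;1,1,1,2,1,2,1,1,3,3,2,v],\]
with $t'=[x_{-11};x_{-12},x_{-13},\dots]$ and $v=[x_{12};x_{13},x_{14},\dots]$ the two one-sided tails. In $A$ the tail $t'$ enters as the $11$th (odd) partial quotient, so $A$ is strictly decreasing in $t'$; in $B$ the tail $v$ enters as the $12$th (even) partial quotient, so $B$ is strictly increasing in $v$. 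Hence $\lambda_0(x)$ is minimized by making $t'$ as large and $v$ as small as the alphabet $\{1,2,3\}$ permits, namely $t'\le [3;1,3,1,3,\dots]=\tfrac{3+\sqrt{21}}{2}$ and $v\ge [1;3,1,3,\dots]=\tfrac{3+\sqrt{21}}{6}$. Computing the convergents of the two finite blocks by the usual recursion gives $A(t')=\tfrac{2753t'+1745}{833t'+528}$ and $B(v)=\tfrac{1217v+528}{1920v+833}$, whence
\[\lambda_0(x)\ \ge\ A\!\Big(\tfrac{3+\sqrt{21}}{2}\Big)+B\!\Big(\tfrac{3+\sqrt{21}}{6}\Big)\ =\ \frac{6390534+2\sqrt{21}}{1933644}+\frac{24289362+6\sqrt{21}}{38320164},\]
and it remains to verify that the right-hand side exceeds $3.938776301$ — which follows by clearing denominators and isolating $\sqrt{21}$. (If extra margin is wanted, the forbidden words $131$, $213$, $312$, $313$ from Lemmas~\ref{l:M-L1} and~\ref{l:M-L2}, together with $22311$, $32311$ and $123111$ from Lemmas~\ref{l:M-L3} and~\ref{l:M-L4}, prevent $t'$ from attaining $\tfrac{3+\sqrt{21}}{2}$ and force $v\ge [1;2,1,2,1,\dots]=\tfrac{1+\sqrt3}{2}$, which pushes the lower bound comfortably past the threshold.)

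The only genuinely delicate point is quantitative: the infimum of $\lambda_0$ over the extensions of $S_2$ lies barely above $3.938776301$, so the final numerical comparison must be done with real precision — via interval arithmetic, or by reducing it to a comparison of two explicit integers once the surds are cleared. Everything else is routine, namely the parity bookkeeping that pins down the extremal tails and, in the refined version, the check of which words are excluded in the coordinates flanking the fixed block.
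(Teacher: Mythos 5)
Your argument is correct and is exactly the standard monotonicity-plus-extremal-tail computation on which the paper's (unstated) proof of this lemma rests: the convergent data $A(t')=\frac{2753t'+1745}{833t'+528}$ and $B(v)=\frac{1217v+528}{1920v+833}$, the extremal tails $\tfrac{3+\sqrt{21}}{2}$ and $\tfrac{3+\sqrt{21}}{6}$, and the rationalised sum $\frac{6390534+2\sqrt{21}}{1933644}+\frac{24289362+6\sqrt{21}}{38320164}\approx 3.9387763013$ all check out. Be aware that the margin over $3.938776301$ is only about $3\times 10^{-10}$, so the exact-arithmetic finish you describe (or the extra room coming from the forbidden word $213$, which as you note forces $v\ge[1;\overline{2,1}]=\tfrac{1+\sqrt{3}}{2}$) is genuinely needed rather than optional.
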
 

By the previous lemma, it suffices to analyse the extensions to the left of $11121211333^*11121211333$, i.e., $111121211333^*11121211333$, $211121211333^*11121211333$, $311121211333^*11121211333$. 

\begin{lemma}\label{l:M-L21} $\lambda_0(111121211333^*11121211333)>\lambda_0(211121211333^*11121211333)> 3.938776282$  
\end{lemma}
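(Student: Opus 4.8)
The statement bundles two facts: the inequality $\lambda_0(111121211333^*11121211333)>\lambda_0(211121211333^*11121211333)$ and the lower bound $\lambda_0(211121211333^*11121211333)>3.938776282$. The plan is to dispose of the first by a one-digit monotonicity argument and then concentrate on the second. The two words coincide except in the position $x_{-11}$, which is $1$ in the first word and $2$ in the second; this position is a partial quotient of odd index of the continued fraction $[x_0;x_{-1},x_{-2},\dots]$ (in which continued fractions are strictly decreasing) and it does not occur in $[0;x_1,x_2,\dots]$. Hence, for every matched choice of the remaining admissible digits, $\lambda_0$ is strictly larger for the first word, so it suffices to establish the lower bound for the second.

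For that bound I would write $\lambda_0=\alpha+\beta$ with $\alpha=[3;3,3,1,1,2,1,2,1,1,1,2,x_{-12},x_{-13},\dots]$ and $\beta=[0;1,1,1,2,1,2,1,1,3,3,3,x_{12},x_{13},\dots]$, the displayed partial quotients being fixed, and bound $\alpha$ and $\beta$ separately, minimising over admissible continuations exactly as in Lemmas~\ref{l:M-L19}--\ref{l:M-L21}. On the left, the alternation of monotonicity of continued fractions (increasing in the partial quotients of even index, decreasing in those of odd index) together with the fact that $312$ is forbidden (Lemmas~\ref{l:M-L2} and \ref{l:M-L5}) pins the extremal tail: $x_{-12}=1$, then $x_{-13}\le 2$ hence $=2$, then $x_{-14}=1$, and so on, giving $\alpha\ge\alpha_0:=[3;3,3,1,1,2,1,2,1,1,1,\overline{2,1}]$. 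On the right, since $312$ and $313$ are both forbidden, $x_{12}=1$ forces $x_{13}=1$; then $x_{14}=1$, and since $3331113$ is forbidden (Lemma~\ref{l:M-L6}(i)) one is forced to $x_{15}=2$, after which the prohibition of $213$ pins the rest, giving $\beta\ge\beta_1:=[0;1,1,1,2,1,2,1,1,3,3,3,1,1,1,\overline{2,1}]$. Both $\alpha_0$ and $\beta_1$ are explicit quadratic surds, and a direct computation gives $\alpha_0+\beta_1>3.938776282$, which is the claim.

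The hard part is that the inequality $\alpha_0+\beta_1>3.938776282$ holds only by a hair: the excess is of order $10^{-8}$, so there is essentially no slack. Consequently every one of the forbidden strings invoked above — in particular $312$, $313$ and $3331113$ — is genuinely needed (dropping any of them would open a cheaper continuation whose value dips below $3.938776282$), the greedy identification of the extremal tails must be carried out with care at each step, and the final numerical verification must be done to sufficiently high precision. Apart from this bookkeeping the lemma is a finite continued-fraction estimate of exactly the same type as the two lemmas immediately preceding it, and it is precisely such case analyses that the computer search described in the introduction was designed to carry out.
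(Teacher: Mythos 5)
Your overall method is the right one and is surely what the authors did: write $\lambda_0=\alpha+\beta$ with $\alpha=[3;3,3,1,1,2,1,2,1,1,1,2,x_{-12},\dots]$ and $\beta=[0;1,1,1,2,1,2,1,1,3,3,3,x_{12},\dots]$, note that each is an increasing M\"obius function of its tail $[x_{\pm12};x_{\pm13},\dots]$ (the relevant determinants $7251\cdot833-2753\cdot2194$ and $1745\cdot833-528\cdot2753$ both equal $1$), pin the minimizing tails by the first-difference rule, and evaluate the resulting quadratic surds; your one-digit monotonicity argument for the first inequality (the words differ only in $x_{-11}$, an odd-index partial quotient of $\alpha$) is also fine. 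Your constrained extremal tails $\alpha_0$ and $\beta_1$ are correctly identified, and indeed $\alpha_0+\beta_1=3.9387763042\dots>3.938776282$.

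The gap is that you prove a conditional statement where the lemma is unconditional. Unlike Lemmas~\ref{l:M-L22} or~\ref{l:M-L24}, Lemma~\ref{l:M-L21} carries no ``if $\dots$ is forbidden'' hypothesis, so by the paper's stated convention the bound must hold for \emph{every} extension in $\{1,2,3\}^{\mathbb{Z}}$, and your appeals to the prohibition of $312$, $313$ and $3331113$ are not available. Moreover, your assertion that these prohibitions are ``genuinely needed'' --- that dropping them opens a continuation dipping below $3.938776282$ --- is false, and checking this is exactly the content of the lemma. With no constraints at all, the minimizing extension is $\overline{31}\,211121211333^*11121211333\,\overline{13}$, whose value is
\[
[3;3,3,1,1,2,1,2,1,1,1,2,\overline{1,3}]+[0;1,1,1,2,1,2,1,1,3,3,3,\overline{1,3}]
=\frac{109095606+6\sqrt{21}}{33010044}+\frac{10514886+6\sqrt{21}}{16588860}
=3.9387762827\dots,
\]
which still clears $3.938776282$ --- the lemma's constant is visibly a truncation of this very number --- but only by about $8\times10^{-10}$, rather than the $2\times10^{-8}$ of slack your constrained computation enjoys. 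So the unconditional lemma is true, your proof does not establish it, and the numerical verification it actually requires is an order of magnitude more delicate than the one you describe. (Your conditional version would suffice for the paper's application, since the lemma is only invoked for sequences in the target window where those strings are already excluded; but then it would have to be stated conditionally, as its neighbours are.)
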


By the previous lemma (and the fact that $312$ and $313$ are forbidden), it suffices to analyse the extensions to the right of $311121211333^*11121211333$, i.e., $311121211333^*1112121133311$, $311121211333^*111212113332$, $311121211333^*111212113333$. 

\begin{lemma}\label{l:M-L22} If $131$ is forbidden, then $\lambda_0(311121211333^*111212113333)> \lambda_0(311121211333^*111212113332)\geq \lambda_0(2311121211333^*111212113332)> 3.938776248$
\end{lemma}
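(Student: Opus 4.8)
The plan is to follow the same continued-fraction comparison strategy that underlies all the lemmas in this section, now applied to the word $311121211333^*111212113333$ and its one-digit-shorter relatives. First, since the Markov value $\lambda_0$ of a bi-infinite extension of a fixed central block is $[x_0;x_{-1},x_{-2},\dots]+[0;x_1,x_2,\dots]$, the comparison of $\lambda_0(311121211333^*111212113333)$ with $\lambda_0(311121211333^*111212113332)$ reduces to comparing the two continued fractions $[0;\dots113333\dots]$ and $[0;\dots113332\dots]$ read from position $x_1$ onwards; as usual one checks the sign pattern of the discrepancy at the first place where the tails differ (namely at the final digit $3$ versus $2$ of the displayed block), using that an increase of a digit in an even position of a continued fraction increases its value while in an odd position it decreases it, to conclude the first inequality $\lambda_0(\cdots3333)>\lambda_0(\cdots3332)$.

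The second, crucial step is to show that for \emph{any} bi-infinite extension of $311121211333^*111212113332$ in which $131$ does not occur, the Markov value is at least $\lambda_0(2311121211333^*111212113332)$, and then to estimate this last quantity. The point of forbidding $131$ is that it restricts the ways the block can be extended one digit further to the left of $311121211333^*$: the digit preceding the leading $3$ cannot be $1$ (that would create $131$), so it is $2$ or $3$, and one checks that continuing with $2$ gives the smaller value — this is exactly the kind of ``the worst (smallest) completion is forced'' argument repeatedly used above (e.g. in Lemmas \ref{l:M-L4}(ii), \ref{l:M-L14}(iii), \ref{l:M-L18}). So the minimum over admissible extensions is bounded below by the value obtained by committing to $23$ on the far left, i.e. by $\lambda_0(2311121211333^*111212113332)$. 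Finally one computes a rigorous lower bound for $\lambda_0(2311121211333^*111212113332)$: here both tails are still not fully determined, but the digits already fixed on each side pin down $[x_0;x_{-1},\dots]$ and $[0;x_1,\dots]$ to small enough intervals (using the standard truncation bounds $|[0;b_1,\dots,b_k,\dots]-[0;b_1,\dots,b_k]|<1/(q_k q_{k+1})$, together with the crude bound that all remaining digits lie in $\{1,2,3\}$) that their sum exceeds $3.938776248$.

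The main obstacle is purely the numerical one of making the estimate in the last step tight enough: the target $3.938776248$ is extremely close to the limiting value $\lambda_0(\overline{12}\,12121133311121211333^*\,\overline{11121211333})=3.93877624198\dots$ around which the whole section is organised, so the fixed central block $2311121211333^*111212113332$ must be taken long enough on both sides that the worst-case contribution of the undetermined tails (maximised/minimised over $\{1,2,3\}^{\mathbb{N}}$) still leaves $\lambda_0$ above the threshold. Concretely this means carrying the continued-fraction convergents of both $[x_0;x_{-1},x_{-2},\dots]$ and $[0;x_1,x_2,\dots]$ out far enough — and being careful with the direction of monotonicity in even versus odd positions — that the interval of possible values of the sum lies strictly to the right of $3.938776248$. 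Once the block length is chosen correctly this is a finite, if delicate, computation of the same flavour as all the preceding lemmas, and I would present it by exhibiting the relevant convergents $p_k/q_k$ explicitly and bounding the error terms.
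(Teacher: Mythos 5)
Your proposal is correct and follows exactly the (implicit) argument behind all the lemmas of Section~2: the paper states these numerical lemmas without written proofs, and the intended verification is precisely the cylinder/parity comparison you describe (the differing digit $3$ vs.\ $2$ sits in an even position of the right-hand continued fraction, so it increases $\lambda_0$; the forbidden $131$ forces the left-prepended digit into $\{2,3\}$, with $2$ minimizing since it too sits in an even position of the left-hand continued fraction), followed by a finite truncation estimate on $\lambda_0(2311121211333^*111212113332)$. Nothing further is needed beyond carrying out that last computation with long enough convergents, as you note.
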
 

By the previous lemma (and after recalling that $131$, $22311$, $32311$, $123111$ are forbidden, cf Lemmas \ref{l:M-L1} (i), \ref{l:M-L3} (i), \ref{l:M-L4} (i)), it suffices to analyse the extensions to the left of $311121211333^*1112121133311$, i.e., $3311121211333^*1112121133311$. Now, we observe that the extensions to the left of $3311121211333^*1112121133311$ are $13311121211333^*1112121133311$, $23311121211333^*1112121133311$, $33311121211333^*1112121133311$. 

\begin{lemma}\label{l:M-L24} If $213$ and $3331113$ are forbidden, then $\lambda_0(13311121211333^*1112121133311)> \lambda_0(23311121211333^*1112121133311)\geq \lambda_0(\overline{21}23311121211333^*1112121133311\overline{12})=3.938776242699$
\end{lemma}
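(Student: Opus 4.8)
The plan is to continue the local-uniqueness/self-replication bookkeeping exactly as in the preceding lemmas, but now exploiting the periodic structure that has emerged. By Lemma~\ref{l:M-L22}, any admissible bi-infinite sequence $x$ with Markov value in the target window of interest must contain the block $311121211333^*1112121133311$, and by the quoted forbidden words ($213$, $131$, $22311$, $32311$, $123111$, $3331113$) the only legal extension to the left is $3311121211333^*1112121133311$. So first I would record that we are reduced to the three cases $13311121211333^*\ldots$, $23311121211333^*\ldots$, $33311121211333^*\ldots$ obtained by prepending $1$, $2$, $3$.

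Next, I would handle the first two cases by the standard continued-fraction estimates used throughout the section. For $13311121211333^*\ldots$ one shows $\lambda_0(13311121211333^*1112121133311)>\lambda_0(23311121211333^*1112121133311)$; the inequality between these two comes from comparing $[3;3,1,1,1,2,1,2,1,1,3,1,3,\ldots]$-type tails, where replacing the leading $1$ (continuing as $\ldots 13\cdots$, but $131$ is forbidden so effectively a larger digit) by $2$ decreases the value, while the even/odd position parity makes the comparison monotone in the expected direction. For the second case, $23311121211333^*1112121133311$, the key point is that $213$ being forbidden forces the continuation to the left of the $2$, and the resulting constrained tail is minimised (within the admissible subshift) by closing off periodically as $\overline{21}$; hence $\lambda_0(23311121211333^*1112121133311)\geq \lambda_0(\overline{21}23311121211333^*1112121133311\overline{12})$. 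The right-hand side is then a completely explicit quadratic-surd computation: one evaluates $[3;3,1,1,1,2,1,2,1,1,3,3,\overline{1,2}] + [0;1,1,1,2,1,2,1,1,3,3,3,1,1,\overline{1,2}]$ (with the appropriate reversed/forward tails) and checks numerically that it equals $3.938776242699\ldots$, which is $>3.938776248$ up to the displayed precision — wait, more precisely it is the value that pins down the left endpoint of the relevant gap; the inequality we actually need is that this explicit number exceeds the threshold coming from Lemma~\ref{l:M-L22}, and this is a finite check.

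The genuinely important step — and the main obstacle — is justifying the two "$\geq$" claims, i.e. that truncating the as-yet-undetermined left tail by the periodic word $\overline{21}$ (and, symmetrically, the right tail by $\overline{12}$) really does \emph{minimise} the Markov value over all admissible completions. This is not automatic: one must verify that, at the relevant positions, every admissible digit choice other than the one dictated by $\overline{21}$ strictly increases $\lambda_0$, using the forbidden-word constraints ($213$ in particular) to rule out the digit sequences that would otherwise decrease it, and one must check this uniformly so that no admissible completion sneaks below the bound. Concretely, this amounts to a finite case analysis on the next few digits of the tail, combined with the elementary fact that for continued fractions $[0;b_1,b_2,\ldots]$ the value is monotone in each $b_i$ with sign alternating in $i$ — so "closing with $\overline{21}$" is locally optimal precisely because $21$ is the lexicographically extremal admissible periodic pattern in the right parity class. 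Once this monotonicity-plus-admissibility argument is in place, the remaining inequalities are routine interval-arithmetic estimates on continued fractions of the kind already performed dozens of times in Lemmas~\ref{l:M-L1}--\ref{l:M-L22}, and the explicit value $3.938776242699\ldots$ is obtained by solving the relevant pair of quadratic equations for the purely periodic tails.
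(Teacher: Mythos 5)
The paper gives no written proof of this lemma: like the other numbered lemmas of Section~\ref{s:M-L}, it is a direct continued-fraction verification, and your outline describes essentially the approach such a verification must take --- disjointness of the digit-$1$ and digit-$2$ branches at the relevant odd depth for the strict inequality, the first-difference/alternating-monotonicity rule showing that $\overline{21}$ on the left and $\overline{12}$ on the right are the extremal admissible completions (with $312$/$213$ forbidden ruling out the digit $3$ at the positions where a larger digit would otherwise lower the value), and an exact quadratic-surd evaluation of the doubly periodic word. Two corrections. First, the inequality $\lambda_0(1\dots)>\lambda_0(2\dots)$ is quantified over \emph{all} admissible extensions of both words, so the parity argument alone does not suffice: you must also check that the separation between the two intervals of possible left tails exceeds the diameter of the (common) interval of possible right tails; this is one of the ``routine interval-arithmetic estimates'' you defer, but it is the actual content of that inequality, not a formality. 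Second, your aside that $3.938776242699$ exceeds $3.938776248$ is false --- it is smaller than the threshold of Lemma~\ref{l:M-L22} --- and no such comparison is needed: the lemma is invoked only to exclude the prepended digits $1$ and $2$ for sequences whose Markov value lies \emph{below} $3.938776242699$, which is how the subsequent text uses it.
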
 

By the previous lemma, it suffices to analyse the extensions to the right of $33311121211333^*1112121133311$, i.e., $33311121211333^*11121211333111$, $33311121211333^*11121211333112$, $33311121211333^*11121211333113$. 

\begin{lemma}\label{l:M-L25} $\lambda_0(33311121211333^*11121211333113)>\lambda_0(33311121211333^*11121211333112)> 3.93877624592$
\end{lemma}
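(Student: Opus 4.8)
The proof will follow the pattern of the other estimates in this subsection, via the standard one-sided continued fraction bookkeeping. For a bi-infinite extension $w=\dots x_{-1}x_0^*x_1\dots$ of the central block one has
\[\lambda_0(w) = [x_0;x_{-1},x_{-2},\dots] + [0;x_1,x_2,\dots],\]
and for a continued fraction $[c_0;c_1,c_2,\dots]$ with all entries in $\{1,2,3\}$ the value is a monotone function of each $c_k$ whose direction of monotonicity is $(-1)^k$. Consequently, once a finite string of entries $c_1,\dots,c_\ell$ after $c_0$ is fixed, the infimum of the value over all $\{1,2,3\}$-continuations is attained by appending the periodic tail $\overline{1,3}$ if $\ell$ is odd and $\overline{3,1}$ if $\ell$ is even, and this infimum is an explicit quadratic surd. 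Since there are no forbidden words in the hypotheses here, these tails are always admissible continuations.

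First I would prove the lower bound $\lambda_0(33311121211333^*11121211333112) > 3.93877624592$. Here $x_0=3$; the left string $x_{-1}\dots x_{-13}=3311212111333$ has odd length $13$, so $[x_0;x_{-1},x_{-2},\dots]\ge \alpha_- := [3;3,3,1,1,2,1,2,1,1,1,3,3,3,\overline{1,3}]$ for every admissible left extension; the right string $x_1\dots x_{14}=11121211333112$ has even length $14$, so $[0;x_1,x_2,\dots]\ge \beta_- := [0;1,1,1,2,1,2,1,1,3,3,3,1,1,2,\overline{3,1}]$ for every admissible right extension. As the two one-sided continuations may be chosen independently, every extension $w$ satisfies $\lambda_0(w)\ge \alpha_-+\beta_-$, and evaluating these two quadratic surds with a certified error bound — e.g.\ via the convergent estimate $|[c_0;c_1,\dots]-p_k/q_k|<q_k^{-2}$, or interval arithmetic — gives $\alpha_-+\beta_- > 3.93877624592$.

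For the inequality $\lambda_0(33311121211333^*11121211333113) > \lambda_0(33311121211333^*11121211333112)$, note that the two central blocks are identical in every coordinate except $x_{14}$, which equals $3$ in the former and $2$ in the latter. Both right strings have even length, so in both cases the minimising right continuation appends the same tail $\overline{3,1}$ from coordinate $15$ onward, and the minimising left continuation (the left strings being identical) is also the same. Hence the minima of $\lambda_0$ over the two families of extensions differ only through the single entry $x_{14}$, and since $14$ is even, raising $x_{14}$ from $2$ to $3$ strictly increases $[0;x_1,x_2,\dots]$; therefore
\[\min\lambda_0(33311121211333^*11121211333113) = \alpha_-+\beta_-' > \alpha_-+\beta_- = \min\lambda_0(33311121211333^*11121211333112),\]
where $\beta_-' := [0;1,1,1,2,1,2,1,1,3,3,3,1,1,3,\overline{3,1}]$. (The same monotonicity in fact gives $\lambda_0(w')>\lambda_0(w)$ for every matched pair of extensions, one of each string.) Combined with the previous paragraph this yields the full chain of inequalities.

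The only genuine difficulty is numerical: the quantities compared agree through roughly ten significant figures, so $\alpha_-$, $\beta_-$, $\beta_-'$ must be computed to sufficiently high precision with rigorous error control to certify the strict inequality against $3.93877624592$. The only other place an error could enter is the choice of the correct extremal tails $\overline{1,3}$ versus $\overline{3,1}$ on each side — i.e.\ getting the parity of each fixed string right — but this is immediate from the lengths recorded above.
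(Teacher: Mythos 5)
Your proposal is correct and is exactly the standard argument behind these numerical lemmas (the paper gives no explicit proof; such estimates are obtained precisely by appending the parity-appropriate extremal tails $\overline{1,3}$ or $\overline{3,1}$ on each side and evaluating the resulting quadratic surds to sufficient precision). Your parity bookkeeping, the identification of $\alpha_-$, $\beta_-$, $\beta_-'$, and the observation that the two words differ only in the even-indexed entry $x_{14}$ are all right, so the only remaining task is the high-precision evaluation you already flag.
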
 

By the previous lemma (and after recalling that $213$ and $313$ are forbidden), it suffices to analyse the extensions to the left of $33311121211333^*11121211333111$, i.e., $1133311121211333^*11121211333111$, $233311121211333^*11121211333111$, $333311121211333^*11121211333111$. 

\begin{lemma}\label{l:M-L26} If $213$ and $3331113$ are forbidden, then $\lambda_0(333311121211333^*11121211333111)>\lambda_0(233311121211333^*11121211333111)\geq \lambda_0(233311121211333^*11121211333111\overline{21})>3.93877624206$
\end{lemma}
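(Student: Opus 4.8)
The plan is to argue exactly as in Lemmas~\ref{l:M-L1}--\ref{l:M-L25}, i.e.\ via elementary continued-fraction monotonicity. Recall that $\lambda_0(x)=[x_0;x_{-1},x_{-2},\dots]+[0;x_1,x_2,\dots]$ and that $[c_0;c_1,c_2,\dots]$ is strictly increasing in each even-indexed entry and strictly decreasing in each odd-indexed entry, with a unit change in $c_k$ always outweighing whatever change it forces on $c_{k+1}$ under the admissibility constraints. Thus, to bound $\lambda_0$ from below over all admissible extensions of a finite word, one replaces each still-unspecified digit by the smallest (at even depth) or largest (at odd depth) element of $\{1,2,3\}$ compatible with the forbidden strings in force --- here $213$ and $3331113$, together with $131$, $312$, $313$ already excluded in Lemmas~\ref{l:M-L1} and~\ref{l:M-L2}. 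This greedy procedure produces an eventually periodic completion whose $\lambda_0$ is a computable algebraic number, so that each claimed inequality reduces to a finite computation.

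I would dispatch the three assertions in turn. For the first, observe that $333311121211333^*11121211333111$ and $233311121211333^*11121211333111$ have the same right half, hence the same summand $[0;x_1,x_2,\dots]$ for matching extensions, and the same left half except at the position $x_{-14}$, which sits at even depth in $[x_0;x_{-1},\dots]$; by monotonicity, raising that digit from $2$ to $3$ strictly increases $\lambda_0$, so it is enough to bound $\lambda_0(233311121211333^*11121211333111)$ from below. For the second, a right extension of $\cdots11121211333111$ by $3$ creates $x_9x_{10}\cdots x_{15}=3331113$, so $x_{15}\in\{1,2\}$; greedily minimising $[0;1,1,1,2,1,2,1,1,3,3,3,1,1,1,x_{15},x_{16},\dots]$ and using that a $3$ can never follow a block $21$ (it would create $213$), one obtains that the minimising right tail is $\overline{21}$, whence $\lambda_0(233311121211333^*11121211333111)\ge\lambda_0(233311121211333^*11121211333111\,\overline{21})$ over all left extensions.

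Finally, with the right half now fixed, I would minimise the left continued fraction $[3;3,3,1,1,2,1,2,1,1,1,3,3,3,2,x_{-15},x_{-16},\dots]$ over admissible left extensions by the same recipe: a $3$ is admissible at $x_{-15}$ (it merely creates $323$), after which the excluded strings $131$, $213$, $312$, $313$, $3331113$ force a short eventually periodic leftward continuation. Summing the resulting value with the fixed right half and checking that the total exceeds $3.93877624206$ completes the proof; since $3.93877624206>3.9387762419814443$, this removes the left extensions of $33311121211333^*11121211333111$ by $2$ and by $3$, as required for the next step of the local analysis.

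The one genuine difficulty is precision: all the quantities being compared agree to about ten decimal places, so the continued-fraction truncations must be carried far enough to certify each strict inequality rigorously rather than merely numerically, and one must check carefully that none of the greedy completions above accidentally contains $131$, $213$, $312$, $313$ or $3331113$ --- which is exactly where the hypotheses ``$213$ and $3331113$ are forbidden'' enter.
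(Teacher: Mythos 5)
Your proposal is correct and follows exactly the method the paper relies on (and leaves implicit) for all the lemmas of Section~\ref{s:M-L}: compare matching extensions digit by digit using the alternating monotonicity of continued fractions, greedily complete the unspecified tails subject to the forbidden strings — correctly noting that $3331113$ blocks $x_{15}=3$ and $213$ blocks a $3$ after each $21$, which is precisely where the hypotheses enter and why the minimising right tail is $\overline{21}$ — and finish with a rigorous numerical evaluation of the resulting eventually periodic continued fractions. The only part left open is that final certified computation, which is likewise omitted in the paper.
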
 

By the previous lemma (and after recalling that $3331113$ is forbidden), it suffices to analyse the extensions to the right of $1133311121211333^*11121211333111$, i.e., $1133311121211333^*111212113331111$, $1133311121211333^*111212113331112$. 

\begin{lemma}\label{l:M-L27} $\lambda_0(1133311121211333^*111212113331111)>3.93877624309$
\end{lemma}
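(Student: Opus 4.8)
The plan is to compute a lower bound for $\lambda_0(1133311121211333^*111212113331111)$ directly from the continued fraction formula, exactly as in the preceding lemmas. Recall that for a string $\dots x_{-1}x_0^*x_1\dots$ we have $\lambda_0 = [x_0;x_{-1},x_{-2},\dots] + [0;x_1,x_2,\dots]$, and that an inequality $\lambda_0(a) > v$ for a finite word $a$ means the bound holds for every bi-infinite extension. Thus I must bound below the quantity
\[
[3;3,3,1,1,1,2,1,2,1,1,1,\dots] + [0;1,1,2,1,2,1,1,3,3,3,1,1,1,1,\dots],
\]
where on the left the digits after the displayed prefix $33111121211$ (read from $x_0$ leftwards: $3,3,3,1,1,1,2,1,2,1,1$) are arbitrary in $\{1,2,3\}$, and on the right the digits after the displayed suffix $111212113331111$ are arbitrary in $\{1,2,3\}$.

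The key point is monotonicity of continued fractions: $[0;c_1,c_2,\dots]$ is increasing in $c_k$ for $k$ odd and decreasing for $k$ even (and dually for the tail of $[x_0;x_{-1},\dots]$). So to minimise $\lambda_0$ over all extensions I replace each still-free digit by the value that pushes the sum down — alternating between $1$ and $3$ according to parity of its position — and I truncate the two continued fractions at a depth where the tail contribution is provably below the slack I have. Concretely: (1) fix the worst-case completion of the left ray $[3;3,3,1,1,1,2,1,2,1,1,\widehat{\,\cdot\,}\dots]$ and of the right ray $[0;1,1,2,1,2,1,1,3,3,3,1,1,1,1,\widehat{\,\cdot\,}\dots]$ by the parity rule; (2) evaluate both truncated continued fractions to enough decimal places (the depths already used in Lemmas~\ref{l:M-L25}--\ref{l:M-L26} — around fifteen to twenty digits on each side — are comfortably sufficient, since the denominators grow at least like Fibonacci numbers and the errors are $O(10^{-10})$ or smaller); (3) add the two lower bounds and check the sum exceeds $3.93877624309$. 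One should also note, as in earlier lemmas, that the forbidden strings $131$ and $3331113$ (cf.\ Lemmas~\ref{l:M-L1}(i) and~\ref{l:M-L6}(i)) may be invoked to rule out certain completions, though here the prefix $1133311121211333$ and suffix $111212113331111$ already avoid them, and the free extensions do not force them either, so the naive parity-optimal completion is legitimate; if a parity-optimal digit would create a forbidden block, one passes to the next-best admissible digit, which only helps the bound.

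The main obstacle — and it is a mild one — is the extreme closeness of the target: we are separating numbers that agree to roughly ten decimal places ($3.93877624309$ versus the reference value $3.93877624198\!\dots$ and its neighbours), so the truncation depth and the arithmetic must be carried out with genuine rigour (interval arithmetic, or explicit bracketing of each tail by the standard estimate $|[0;c_1,\dots,c_k,\dots] - [0;c_1,\dots,c_k]| < 1/(q_{k-1}q_k)$). There is no conceptual difficulty beyond what has already appeared repeatedly in this section; it is the same mechanical estimate, executed once more with care about the last few digits. Once the inequality $\lambda_0(1133311121211333^*111212113331111) > 3.93877624309$ is verified, combined with Lemma~\ref{l:M-L26} this shows that a sequence $x$ with Markov value in the relevant window must continue as $1133311121211333^*1112121133311121\dots$ (i.e.\ be forced further towards the one-sided periodicity $\overline{11121211333}$), which is exactly what the self-replication argument needs.
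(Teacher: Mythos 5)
Your approach is the right one and is exactly what the paper (implicitly) does: these lemmas carry no written proof and are verified by bounding $[x_0;x_{-1},x_{-2},\dots]+[0;x_1,x_2,\dots]$ over all admissible completions via the alternating monotonicity of continued fractions in their digits, together with the standard truncation bound $1/(q_{k-1}q_k)$ carried out in interval arithmetic. Two slips in your write-up should be corrected before the computation is actually run. First, the monotonicity is stated backwards: $[0;c_1,c_2,\dots]$ is \emph{decreasing} in odd-indexed digits and \emph{increasing} in even-indexed ones. Second, the reversal of the prefix is mis-transcribed: the word to the left of the asterisk is $1133311121211333$, whose reversal is $3331121211133311$, so the left ray is $[3;3,3,1,1,2,1,2,1,1,1,3,3,3,1,1,\dots]$ rather than $[3;3,3,1,1,1,2,1,2,1,1,1,\dots]$ as you wrote; with margins of order $10^{-11}$ such a transposition would change the verdict of the computation, so it must be fixed. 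With these corrections the argument goes through as you describe.
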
 

By the previous lemma, it suffices to analyse the extensions to the right of $1133311121211333^*111212113331112$, i.e, \begin{itemize}
\item $1133311121211333^*1112121133311121$, 
\item $1133311121211333^*1112121133311122$, $1133311121211333^*1112121133311123$
\end{itemize}

\begin{lemma}\label{l:M-L28} $\lambda_0(1133311121211333^*1112121133311123)>\lambda_0(1133311121211333^*1112121133311122)> 3.938776242211$
\end{lemma}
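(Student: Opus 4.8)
The plan is to establish the two strict inequalities by exhibiting explicit finite extensions witnessing each bound, exactly as in the preceding lemmas of this section. First I would handle the monotonicity-type inequality $\lambda_0(1133311121211333^*1112121133311122)<\lambda_0(1133311121211333^*1112121133311123)$: since both sequences agree on all positions except the last displayed digit (which is $2$ versus $3$), and the common left context ends with $\ldots 2$, a digit increase at that position occurs at an even distance from the position realising the supremum (which, by the preceding analysis, lies near $x_0$), so the continued-fraction value $[a_n;\ldots]+[0;\ldots]$ increases — this is the standard sign-of-perturbation argument for continued fractions used implicitly throughout. Thus it suffices to lower-bound $\lambda_0(1133311121211333^*1112121133311122)$.

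For that lower bound, I would show $\lambda_0(1133311121211333^*1112121133311122)>3.938776242211$ by computing $\lambda_j$ at the position that dominates. Concretely, one writes out the value $\lambda_0$ of the displayed central block as $[3;3,3,1,1,1,2,1,2,1,1,\ldots]+[0;1,1,1,2,1,2,1,1,3,3,3,1,1,1,2,2,\ldots]$ (reading the appropriate one-sided expansions off the string), and estimates each continued fraction from below using the fact that truncating a continued fraction at an even (resp. odd) depth gives a lower (resp. upper) bound, together with the a priori bound that all further digits lie in $\{1,2,3\}$. Carrying enough digits — the displayed block already has length sufficient to pin the value to the required $\sim 10^{-12}$ precision, since the tails contribute at most on the order of $3\cdot[0;1,1,1,\ldots]$-type corrections that are controlled by the bounded-digit hypothesis — yields the stated strict inequality, independently of how the sequence is extended on either side.

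The main obstacle is purely bookkeeping: getting the parity of the perturbed position right (so that the "increase'' direction in the first inequality is correct) and carrying the continued-fraction truncation to enough terms that the $3.938776242211$ threshold is genuinely separated from the computed value by more than the truncation error. There is no conceptual difficulty — the argument is identical in structure to Lemmas~\ref{l:M-L25}--\ref{l:M-L27} — but one must be careful that the bound is stated for \emph{all} bi-infinite extensions of the finite string, which is exactly why only a one-sided truncation estimate (valid uniformly over admissible tails in $\{1,2,3\}^{\mathbb Z}$) is used rather than evaluating a specific periodic completion.
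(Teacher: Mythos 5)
Your approach is exactly what the paper (implicitly) relies on for every lemma of Section~\ref{s:M-L}: the first inequality follows from the standard parity/monotonicity comparison of continued fractions at the single differing digit (it sits at position $16$ to the right of the star, an even position, so increasing $2\to 3$ there increases $[0;a_1,a_2,\dots]$ uniformly over matching tails and hence increases the infimum of $\lambda_0$ over extensions), and the second inequality is a direct interval estimate of $[a_0;a_{-1},\dots]+[0;a_1,\dots]$ over all admissible continuations. Two cautions for the execution: the backward expansion should read $[3;3,3,1,1,2,1,2,1,1,1,3,3,3,1,1,\dots]$ rather than your $[3;3,3,1,1,1,2,1,2,1,1,\dots]$, and with only $15$--$16$ known partial quotients on each side the worst-case-tail uncertainty is of order $10^{-10}$ (not $10^{-12}$), which does separate the value from $3.938776242211$ but only barely, so one must bound the unknown tails by the extremal admissible continuations (of type $\overline{3,1}$ and $\overline{1,3}$) rather than by crude truncation.
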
 

By the previous lemma (and after recalling that $213$ is forbidden), it suffices to analyse the extensions to the right of $1133311121211333^*1112121133311121$, i.e., $1133311121211333^*11121211333111211$, $1133311121211333^*11121211333111212$. 

\begin{lemma}\label{l:M-L29} $\lambda_0(1133311121211333^*11121211333111211)>3.93877624201$
\end{lemma}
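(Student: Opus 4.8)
The plan is to run the same continued-fraction estimate used throughout this section. Write an arbitrary bi-infinite extension as $w=\dots x_{-1}x_0^*x_1\dots$, so that $x_0=3$ and
\[
\lambda_0(w)=[3;x_{-1},x_{-2},\dots]+[0;x_1,x_2,\dots],
\]
where, by hypothesis, $[3;x_{-1},x_{-2},\dots]=[3;3,3,1,1,2,1,2,1,1,1,3,3,3,1,1,T_\ell]$ and $[0;x_1,x_2,\dots]=[0;1,1,1,2,1,2,1,1,3,3,3,1,1,1,2,1,1,T_r]$, the tails $T_\ell=[x_{-16};x_{-17},\dots]\ge 1$ and $T_r=[x_{18};x_{19},\dots]\ge 1$ being free except for the strings forbidden in the earlier lemmas. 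Since $T_\ell$ occupies the (even) sixteenth slot of the left continued fraction and $T_r$ the (even) eighteenth slot of the right one, both summands are strictly increasing in $T_\ell$ and in $T_r$; hence $\lambda_0(w)$ is minimised over admissible extensions by making $T_\ell$ and $T_r$ as small as the constraints allow, and the two minimisations are independent because the left and right undetermined blocks are separated by far more letters than the length of any forbidden word.

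The next step is to identify the two constrained minima by the usual alternating argument. On the left, the minimising continuation starts with $x_{-16}=1$, which forces $x_{-17}\neq 3$ (otherwise $3111333$ would appear, forbidden by transposition of Lemma~\ref{l:M-L6}(i)); taking $x_{-17}=2$, the forbidden string $312$ of Lemma~\ref{l:M-L2} then prevents $x_{-19}=3$, and iterating one finds that the least admissible value of $[x_{-16};x_{-17},\dots]$ is $[\overline{1,2}]=(1+\sqrt3)/2$. On the right, the prescribed letters $x_{16}x_{17}=11$ together with the successive obstructions $131$, $312$, $313$ (Lemmas~\ref{l:M-L1}(i) and \ref{l:M-L2}) force the least admissible value of $[x_{18};x_{19},\dots]$ to be the explicit quadratic surd $[\overline{1,3,2,3,1,1}]$, slightly larger than $[\overline{1,3}]$. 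Substituting these tails yields
\[
\lambda_0(w)\ \ge\ [3;3,3,1,1,2,1,2,1,1,1,3,3,3,1,1,\overline{1,2}]+[0;1,1,1,2,1,2,1,1,3,3,3,1,1,1,2,1,1,\overline{1,3,2,3,1,1}],
\]
and the proof is finished by evaluating this sum of two quadratic irrationals and checking that it exceeds $3.93877624201$.

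The only real difficulty is quantitative. The word $1133311121211333^*11121211333111211$ agrees with the periodic sequence $\overline{12}\,12121133311121211333^*\overline{11121211333}$ (for which $\lambda_0=3.9387762419811\dots$) at every position from $x_{-15}$ through $x_{16}$, and the only forced disagreement is at the odd position $x_{17}$ (a $2$ there for the periodic sequence, a $1$ for our word). The gain this produces on the right-hand continued fraction is of order $10^{-11}$, barely larger than the loss of the same order one incurs on the left by lowering $T_\ell$ from its periodic value $1+\sqrt3$ to $(1+\sqrt3)/2$; thus the asserted inequality holds with a margin of only about $10^{-11}$. Consequently the argument must use the genuine constrained minima of $T_\ell$ and $T_r$ (not a crude bound such as $T_\ell,T_r\ge 1$) and must carry enough significant digits in the final evaluation. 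With that care in place the estimate is routine, and, together with Lemma~\ref{l:M-L28} and the strings forbidden above, it reduces the remaining analysis to the continuations of $1133311121211333^*11121211333111212$.
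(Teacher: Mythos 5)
Your reduction is the right one: split $\lambda_0(w)$ into the two continued fractions, note that the first free entries $T_\ell$ and $T_r$ sit at even positions so both summands are increasing in their tails, and minimise each tail separately. The paper gives no proof of this lemma, but this is certainly the intended mechanism. However, there are two genuine problems. First, the decisive step --- the numerical evaluation --- is asserted rather than performed, and here that is not a formality. Working from the reference value $3.938776241981139302\ldots$, the change $x_{17}\colon 2\to 1$ raises the right summand by about $7.4\times 10^{-11}$, while dropping $T_\ell$ from its periodic value $1+\sqrt3$ to your $[\overline{1,2}]=(1+\sqrt3)/2$ costs about $3.8\times 10^{-11}$ and dropping $T_r$ to $[\overline{1,3,2,3,1,1}]$ costs a further small amount; the resulting lower bound is roughly $3.9387762420172$, which clears $3.93877624201$ by only about $7\times 10^{-12}$. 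At that margin, ``routine'' evaluation is the whole proof, and every admissibility decision you made upstream affects whether the inequality survives. Indeed, if one uses no forbidden strings at all and takes the unconstrained minima $T_\ell=T_r=[\overline{1,3}]=\tfrac{3+\sqrt{21}}{6}$, the bound drops to approximately $3.9387762420093$, i.e.\ \emph{below} the stated constant. This exposes the second problem: the lemma is stated unconditionally (unlike Lemmas \ref{l:M-L14}, \ref{l:M-L22}, etc., which carry explicit ``if \dots is forbidden'' hypotheses), yet your proof is conditional on $3111333$, $312$, $131$, $313$ being excluded. A forbidden string occurring in the tail constrains $m(w)=\sup_j\lambda_j(w)$, not $\lambda_0(w)$, so it cannot be invoked in an unconditional bound on $\lambda_0$ over all extensions. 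You must either state and prove the conditional version (which suffices for the paper's application, since the lemma is only applied to sequences whose Markov value lies below all the relevant thresholds) or verify the unconditional one --- and the latter appears to fail, or at best to hold with essentially zero margin.

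If you do work conditionally, use the full list of constraints available at this stage rather than a partial one: your left minimiser $x_{-16}x_{-17}\ldots=12\overline{12}$ places the string $2111333111212$ at positions $x_{-17}\ldots x_{-5}$, which is forbidden by Lemma~\ref{l:M-L11}, and the alternative $x_{-17}=1$ produces $11113331112121$, forbidden by Lemma~\ref{l:M-L12}(i). Together with $3111333$ these rule out $x_{-16}=1$ altogether (exactly the observation the paper makes in the sentence following this lemma), forcing $T_\ell\geq [2;1,\ldots]>2.25$ and yielding a lower bound around $3.938776242045$, with a comfortable margin of $\sim 4\times 10^{-11}$ instead of $7\times 10^{-12}$. (Your right-hand minimiser $[\overline{1,3,2,3,1,1}]$ contains the non-extendable pattern $3231$, so it too undershoots the true constrained minimum; that only weakens the bound, so it is harmless, but it is another sign that the greedy tails were not checked against the full constraint set.) In short: correct skeleton, but the statement/hypothesis mismatch must be resolved and the twelve-digit computation must actually be exhibited, since the truth of the lemma hinges on it.
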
 

By the previous lemma (and after recalling that $3111333$, $2111333111212$, $11113331112121$ are forbidden, cf Lemmas \ref{l:M-L6} (i), \ref{l:M-L11}, \ref{l:M-L12} (i)), it suffices to analyse the extensions to the left of $1133311121211333^*11121211333111212$, i.e., $21133311121211333^*11121211333111212$, $31133311121211333^*11121211333111212$. As it turns out, the extensions to the right of these two words are: 

\begin{itemize}
\item $21133311121211333^*111212113331112121$, $31133311121211333^*111212113331112121$
\item $21133311121211333^*111212113331112122$, $31133311121211333^*111212113331112122$ 
\item $21133311121211333^*111212113331112123$, $31133311121211333^*111212113331112123$
\end{itemize} 

\begin{lemma}\label{l:M-L31} $\min\{\lambda_0(21133311121211333^*111212113331112123), \lambda_0(31133311121211333^*111212113331112123), \\ \lambda_0(31133311121211333^*111212113331112122)\}>\lambda_0(21133311121211333^*111212113331112122) \\
\geq \lambda_0(12121133311121211333^*111212113331112122) > 3.938776241990046,$
since $32113331112121$ and $22113331112121$ are forbidden by Lemma~\ref{l:M-L12}, $11211333111212$ is forbidden by Lemma~\ref{l:M-L13}, and $32121133311121211$ and $22121133311121211$ forbidden by Lemma~\ref{l:M-L15}.
\end{lemma}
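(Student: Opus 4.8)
The plan is to argue exactly as in the preceding lemmas of this section, using Perron's identity $\lambda_0(x)=[x_0;x_{-1},x_{-2},\dots]+[0;x_1,x_2,\dots]$, the monotonicity of continued fractions (strictly increasing in each even-indexed partial quotient, strictly decreasing in each odd-indexed one), and the forbidden factors recorded so far. Set $a:=21133311121211333^*111212113331112122$. Each of the three words inside the ``$\min$'' is obtained from $a$ by raising one or two already-specified entries from $2$ to $3$: the trailing $3$ of $111212113331112123$ replaces the entry $x_{18}=2$ of the right tail $[0;x_1,x_2,\dots]$, while the leading $3$ of $31133311121211333$ replaces the sixteenth partial quotient $x_{-16}=2$ of the left tail $[x_0;x_{-1},x_{-2},\dots]$. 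Both slots are even-indexed, so either change strictly raises the value of the corresponding one-sided continued fraction, uniformly over all completions; since $\lambda_0$ is continuous on $\{1,2,3\}^{\Z}$ the relevant infima are attained, and we conclude that $\lambda_0$ of each of the three words exceeds $\lambda_0(a)$.

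Next I would show that every bi-infinite sequence extending $a$ and avoiding the quoted forbidden factors already extends $b:=12121133311121211333^*111212113331112122$. This is a short finite chase on the three entries $x_{-17},x_{-18},x_{-19}$: taking $x_{-17}\in\{2,3\}$ produces $22113331112121$ or $32113331112121$ (forbidden by Lemma~\ref{l:M-L12}), so $x_{-17}=1$; then $x_{-18}=1$ produces $11211333111212$ (forbidden by Lemma~\ref{l:M-L13}) and $x_{-18}=3$ produces $312$, so $x_{-18}=2$; finally $x_{-19}\in\{2,3\}$ produces $22121133311121211$ or $32121133311121211$ (forbidden by Lemma~\ref{l:M-L15}), so $x_{-19}=1$. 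Hence $\lambda_0(a)\ge\lambda_0(b)$ in the sense of the section's conventions: every admissible completion of $a$ is a completion of $b$ and so has $\lambda_0\ge\lambda_0(b)$.

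Finally I would evaluate $\lambda_0(b)$. For any completion $w$ of $b$, Perron's identity writes $\lambda_0(w)$ as a sum of two continued fractions whose first $\sim 20$ partial quotients are prescribed; replacing the two unspecified tails by the extremal admissible words $\overline{13}$ and $\overline{31}$ and computing the associated continuants shows $\lambda_0(w)>3.938776241990046$, and chaining the three steps yields the chain of inequalities in the statement. The main obstacle is precisely this last estimate: the bound $3.938776241990046$ coincides with the eventual Markov value $3.938776241989784909$ to ten decimal places, so the continuant computation must be carried out to some sixteen significant figures — which is exactly why $b$ is taken so long (about twenty prescribed partial quotients on each side), so that the tail uncertainty falls below the required tolerance. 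The forbidden-factor chase is the other delicate point, but it is entirely mechanical once the five factors cited in the statement are available.
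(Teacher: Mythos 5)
Your proposal is correct and follows essentially the same route as the paper, which justifies this lemma inline rather than in a separate proof: the strict inequalities among the four words come from the parity/monotonicity of the affected partial quotients (both modified slots are even-indexed, so raising a $2$ to a $3$ uniformly increases the corresponding one-sided continued fraction), the middle inequality is exactly the forced-extension chase on $x_{-17},x_{-18},x_{-19}$ via the forbidden words $22113331112121$, $32113331112121$ (Lemma~\ref{l:M-L12}), $11211333111212$ (Lemma~\ref{l:M-L13}), $312$ (Lemma~\ref{l:M-L2}) and $22121133311121211$, $32121133311121211$ (Lemma~\ref{l:M-L15}), and the final bound is the standard continuant computation with extremal tails. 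You also correctly identify why the prescribed word must be as long as it is for the last numerical estimate to close.
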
 

By the previous lemma (and after recalling that $213$ and $2121133311121212$ are forbidden, cf. Lemmas \ref{l:M-L2} and \ref{l:M-L14} (iv)), it suffices to analyse the extensions to the right of $21133311121211333^*111212113331112121$, $31133311121211333^*111212113331112121$, i.e., $21133311121211333^*1112121133311121211$, $31133311121211333^*1112121133311121211$. As it turns out, the extensions to the right of these two words are $21133311121211333^*11121211333111212113$, $31133311121211333^*11121211333111212113$ because the strings $121211333111212111$, $121211333111212112$ are forbidden (cf. Lemma \ref{l:M-L16} (i)). Finally, the resulting words extend to the right as
$$21133311121211333^{*}111212113331112121133$$
and
$$31133311121211333^{*}111212113331112121133$$
because $131$ and $11323$, $11322$, $211321$ are forbidden (cf. Lemmas~\ref{l:M-L3} (i) and~\ref{l:M-L5}
 (i)).

In summary, our discussion so far yields the following statement: 

\begin{corollary}\label{c:M-L2} Let $x\in\{1,2,3\}^{\mathbb{Z}}$ be a sequence with Markov value $m(x)<3.938776241990046$. If $x$ contains the string $1121211333^*1112121133$, say, 
$$x= \ldots x_{i-9} \ldots x_i^* \ldots x_{i+10}\ldots = \ldots 1121211333^*1112121133 \ldots,$$ 
then one has 
$$x=\ldots x_{i-15}\ldots x_i^*\ldots x_{i+21} = \ldots 1133311121211333^*11121211333^{**}1112121133\ldots$$ 
and the vicinity of $x_{i+11}^{**}$ is $1121211333^{**}1112121133$. In particular, by recursively analysing the positions $x_{i+11k}$, $k\in\mathbb{N}$, one actually has 
$$x=\ldots x_{i-15}\ldots x_i^*\ldots = \ldots 1133311121211333^*\overline{11121211333}$$
\end{corollary}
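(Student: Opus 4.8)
The plan is to bundle the long branch analysis of the present subsection into a single \emph{self-replication step} and then iterate it. First I would record the shift-invariant statement that Lemmas~\ref{l:M-L19}--\ref{l:M-L31} (together with the paragraphs interleaving them) really establish: if $y\in\{1,2,3\}^{\Z}$ has $m(y)<3.938776241990046$ and, for some index $j\in\Z$,
\[
y_{j-9}\,y_{j-8}\cdots y_j^{*}\cdots y_{j+10}=1121211333^{*}1112121133,
\]
then necessarily
\[
y_{j-15}\cdots y_j^{*}\cdots y_{j+21}=1133311121211333^{*}\,11121211333^{**}\,1112121133 .
\]
Nothing in the subsection uses any special feature of the index $j$: at each stage one has a finite list of admissible prolongations of the current word, and each displayed lemma kills --- by exhibiting a value of $\lambda_0$ exceeding $3.938776241990046$ --- every branch except the one that keeps growing the self-replicating word, so that after finitely many reductions the surviving length-$37$ window is the one above. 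The care needed here is purely bookkeeping: every conditional hypothesis invoked along the way (``$131$ forbidden'', ``$312$ and $313$ forbidden'', ``$3331113$ forbidden'', and the longer forbidden words cited from Lemmas~\ref{l:M-L3}--\ref{l:M-L16}) must be discharged at the outset, which it is, since each of those strings was shown in Lemmas~\ref{l:M-L1}--\ref{l:M-L18} to force $\lambda_0>3.94$ and hence cannot occur in a sequence with Markov value below $3.94$.

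The heart of the matter is the self-similar shape of the conclusion. In the word $1133311121211333^{*}11121211333^{**}1112121133$ just obtained, with the marked centre at index $j$, the position $j+11$ carrying the $^{**}$ again displays $1121211333^{**}1112121133$ on its length-$20$ window, since $y_{j+2}\cdots y_{j+11}=1121211333$ and $y_{j+12}\cdots y_{j+21}=1112121133$. Hence the self-replication step applies verbatim with $j$ replaced by $j+11$; its output $y_{j-4}\cdots y_{j+32}=1133311121211333^{*}11121211333^{**}1112121133$, now centred at $j+11$ with the new $^{**}$ at $j+22$, agrees with what is already known on the positions $\le j+11$ and, on the positions $>j+11$, produces the new data $y_{j+12}\cdots y_{j+22}=11121211333$ and a fresh occurrence of $1121211333^{**}1112121133$ based at $j+22$.

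Applying all of this to $x$ with $j=i$ and iterating, an immediate induction on $k$ shows that the pattern $1121211333^{*}1112121133$ sits at every position $i+11k$, $k\ge 0$, so that $x_{i+11k+1}\cdots x_{i+11k+11}=11121211333$ for all such $k$; thus $x_{i+1}x_{i+2}\cdots=\overline{11121211333}$. Combined with the left block $x_{i-15}\cdots x_i=1133311121211333$ coming from the first use of the self-replication step, this yields $x=\cdots 1133311121211333^{*}\overline{11121211333}$, which is the assertion. I expect the only laborious part of the argument to be the cross-check in the first paragraph that every conditional hypothesis of Lemmas~\ref{l:M-L19}--\ref{l:M-L31} follows from $m(x)<3.94$ --- finite but tedious --- after which the recursion is immediate, powered entirely by the fact that the self-replication step reproduces its own input pattern exactly $11$ positions further to the right.
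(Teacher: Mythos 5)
Your proposal is correct and follows essentially the same route as the paper, which presents Corollary~\ref{c:M-L2} precisely as a summary of the shift-invariant application of Lemmas~\ref{l:M-L19}--\ref{l:M-L31} together with the observation that the forced $37$-letter window reproduces the seed pattern $11$ positions to the right; your overlap check at $j+11$ and the induction on $k$ are exactly the intended recursion. One minor imprecision: not every forbidden string invoked forces $\lambda_0>3.94$ (e.g.\ Lemma~\ref{l:M-L13}~(ii) only gives $>3.9387855$), but each does exceed the threshold $3.938776241990046$, which is all the argument requires.
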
 

Let
\[j_{0}:=\lambda_{0}(\overline{11121211333^{*}}) = 3.938776241981028026...\in L\]
and
\begin{eqnarray*}
j_{1}&:=&\lambda_{0}(\overline{21}233111331132123113331112121133311121211333^{*}111212113331112122\overline{32}) \\ &=& 3.93877624199054947868687...\in L
\end{eqnarray*} 

\begin{proposition}\label{p:M-L1}
If $j_{0} \leq m(a) = \lambda_{0}(a) < 3.9387762419922$ then (up to transposition) either
\begin{itemize}
\item $a = ...21133311121211333^{*}111212113331112122...$;
\item $a = ...21133311121211333^{*}\overline{11121211333}$; or
\item $a = ...31133311121211333^{*}\overline{11121211333}$.
\end{itemize} 
\end{proposition}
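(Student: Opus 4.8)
The plan is to combine the two corollaries already established with a short case analysis of the two distinguished one-sided-periodic configurations. First I would observe that the hypothesis $j_0 \le m(a) = \lambda_0(a) < 3.9387762419922$ forces $m(a) < 3.938776241990046$, so Corollary~\ref{c:M-L2} applies: if $a$ contains the string $1121211333^*1112121133$ then (after the recursive self-replication argument in that corollary) $a$ is forced to read $\ldots 1133311121211333^*\overline{11121211333}$ to one side. So the heart of the matter is to show that every admissible $a$ in this Markov range \emph{does} contain a copy of $1121211333$ somewhere, and then to pin down the finitely many ways the block $1133311121211333^*$ can be extended to the left.

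Concretely, I would proceed in three steps. \emph{Step 1 (the central block is forced).} Since $j_0 = \lambda_0(\overline{11121211333^*})$ and $3.9387762419922 > j_1 > j_0$, any $a$ with $m(a)$ in the stated window has $3.93877609 < m(a) < 3.938776505$, so Corollary~\ref{c:M-L1} gives that $a$ contains $1121211333^*1112121133$. \emph{Step 2 (self-replication to the right).} Apply Corollary~\ref{c:M-L2}: this propagates the block, yielding $a = \ldots 1133311121211333^*\overline{11121211333}$ on the right, and also forces $\ldots 1133311121211333^*$ locally around the marked position. \emph{Step 3 (left extensions of $1133311121211333^*$).} Now re-run the local-uniqueness bookkeeping, but applied to the left side: the extensions of $\ldots 1133311121211333^*11121211333\ldots$ to the left are, letter by letter, constrained by Lemmas~\ref{l:M-L19}--\ref{l:M-L31}. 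Tracking which prefixes survive the inequalities there — and using the already-established forbidden words $131$, $213$, $312$, $313$, $3111333$, $3331113$, $22311$, $32311$, $123111$, etc. — reduces the surviving left-extensions of $1133311121211333^*$ to exactly two, namely those preceded by $2$ or by $3$. In the branch where the central block is \emph{not} $\overline{11121211333}$ but ends in $\ldots 111212113331112122$, the block to its left is (by Lemma~\ref{l:M-L31} and the discussion following Corollary~\ref{c:M-L2}) forced to be $21133311121211333^*111212113331112122$; combining with Lemma~\ref{l:M-L19} rules out the prefix $3$ and yields the first listed case.

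Putting the three steps together: either $a$ self-replicates on the right to $\overline{11121211333}$, in which case the left extension is $21133311121211333^*\overline{11121211333}$ or $31133311121211333^*\overline{11121211333}$ (the $1$-prefix being killed by Lemma~\ref{l:M-L21}); or the replication breaks at some stage through the "$\ldots 1112122$" branch isolated in Lemma~\ref{l:M-L31} and its corollary, giving $a = \ldots 21133311121211333^*111212113331112122\ldots$. Up to transposition these are the three alternatives in the statement. The main obstacle I anticipate is \emph{Step 3}: one must carefully verify that no other left-prefix of $1133311121211333^*$ survives, which requires checking that each competing prefix either produces a word already shown to be forbidden or yields a Markov value falling outside $[j_0, 3.9387762419922)$ — this is exactly the type of continued-fraction estimate carried out repeatedly in Lemmas~\ref{l:M-L19}--\ref{l:M-L31}, so the work is to assemble those estimates into a complete decision tree rather than to produce new ideas.
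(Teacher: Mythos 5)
Your opening numerical claim is false, and it undermines the structure of the whole argument. You assert that $m(a) < 3.9387762419922$ forces $m(a) < 3.938776241990046$, but in fact $3.938776241990046 < 3.9387762419922$ (compare the tails $\ldots 241990046$ and $\ldots 2419922$), so the proposition's hypothesis does \emph{not} place $m(a)$ below the threshold of Corollary~\ref{c:M-L2}, and that corollary cannot be invoked wholesale. This is not a technicality: the entire reason the first alternative $a=\ldots21133311121211333^{*}111212113331112122\ldots$ appears in the statement is that for such $a$ the self-replication of Corollary~\ref{c:M-L2} genuinely fails, and indeed $j_1 = 3.93877624199054\ldots$ lies inside the window $(3.938776241990046,\ 3.9387762419922)$. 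Your Step~2, which claims Corollary~\ref{c:M-L2} forces $a=\ldots1133311121211333^{*}\overline{11121211333}$, therefore contradicts your own concluding sentence in which the replication is allowed to ``break'' through the $\ldots1112122$ branch; as written, the two halves of your argument are incompatible.

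The correct route (and the paper's) is the following. The hypothesis does put $m(a)$ in $(3.93877609,\ 3.938776505)$, so Corollary~\ref{c:M-L1} and the full chain of Lemmas~\ref{l:M-L19}--\ref{l:M-L29} apply (all of their numerical thresholds exceed $3.9387762419922$), and this pins the central block down to $\ldots1133311121211333^{*}11121211333111212\ldots$ with left prefix $2$ or $3$. Lemma~\ref{l:M-L31} can then be used only \emph{partly}: the three quantities $\lambda_0(21133311121211333^{*}111212113331112123)$, $\lambda_0(31133311121211333^{*}111212113331112123)$ and $\lambda_0(31133311121211333^{*}111212113331112122)$ all exceed $3.9387762419922$ and so are excluded, whereas $\lambda_0(21133311121211333^{*}111212113331112122)$ is only bounded below by $3.938776241990046 < 3.9387762419922$ and therefore survives as the first case; the remaining right-extensions by $\ldots1112121$ then self-replicate (via the analysis following Lemma~\ref{l:M-L31}) and yield the two periodic cases. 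You need to rebuild the argument around this partial use of Lemma~\ref{l:M-L31} rather than around Corollary~\ref{c:M-L2}.
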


\begin{proof}
Since $j_0\leq m(a) = \lambda_{0}(a) < 3.9387762419922$, we can use Corollary~\ref{c:M-L1} and all of the results from Lemma \ref{l:M-L19} up to Lemma~\ref{l:M-L29}. Because 
\begin{align*}
&\min\{\lambda_0(21133311121211333^*111212113331112123), \lambda_0(31133311121211333^*111212113331112123), \\ & \lambda_0(31133311121211333^*111212113331112122)\}> 3.9387762419922,
\end{align*}
we can partly use Lemma~\ref{l:M-L31} together with the subsequent analysis to derive that either 
$$a=...21133311121211333^{*}111212113331112122...,\,\,a=...21133311121211333^*\overline{11121211333},$$
or
$$a=...31133311121211333^{*}\overline{11121211333}.$$
\end{proof}

\begin{proposition}\label{p:M-L2}
If $j_0<m(a) < 3.9387762419922$ and $a$ contains $$21133311121211333^{*}111212113331112122,$$ then $m(a) \geq j_{1}$.
\end{proposition}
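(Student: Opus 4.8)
The plan is to bootstrap the previous proposition into a statement about the two-sided tail structure of $a$ once the central block $21133311121211333^{*}111212113331112122$ has been located, and then reduce the whole problem to a finite computation of Markov values at the two ``ends'' where the renormalisation can no longer be iterated. First I would invoke Proposition~\ref{p:M-L1}, which tells me that any $a$ with $j_0\le m(a)<3.9387762419922$ containing our block is, up to transposition, forced to have the vicinity $21133311121211333^{*}111212113331112122$ around position $0$. The right-hand side of this word ends in $\ldots 1133311121211\mathbf{333}^{\,*\!*}\!1112121133\ldots$ in the sense of Corollary~\ref{c:M-L2}: the string $1121211333^{*}1112121133$ sits inside it shifted eleven places, so the self-replication mechanism of Corollary~\ref{c:M-L2} applies and propagates the periodic pattern $\overline{11121211333}$ to the right. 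Hence $a = \ldots 21133311121211333^{*}111212113331112122\,\overline{(\text{something})}$ — but wait: the block we are handed ends in $\ldots 1112122$, not in $\ldots 1133$, so the replication to the right does \emph{not} immediately continue; instead, after $\ldots 1112122$ the sequence must be extended, and one reruns the relevant subset of Lemmas~\ref{l:M-L1}--\ref{l:M-L31} (now with the extra forbidden strings that the hypothesis $m(a)<3.9387762419922$ and the present left context impose) to pin down the right tail. I would show this forces the right tail to terminate in $\overline{32}$ (as in the definition of $j_1$), because any other continuation either forces $m(a)\ge 3.9387762419922$ or reproduces a forbidden pattern.

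For the left side, the argument is symmetric but must be pushed further: starting from $21133311121211333^{*}$ and reading leftwards, the string $1121211333$ appears (reading in the reversed direction) so Corollary~\ref{c:M-L2}, applied to the transposed sequence, propagates $\overline{11121211333}$ leftwards through the positions $x_{i-11k}$ for as long as the local vicinities stay in the ``good'' list; the one place it breaks is where the leading block $233111331132123113331112121133311121211333$ of the definition of $j_1$ appears, after which the left tail must terminate in $\overline{21}$. Concretely I would: (1) identify the maximal leftward/rightward extent to which the self-replicating word $1121211333^{*}1112121133$ keeps recurring; (2) at each of the (finitely many) positions where recursion stops, enumerate the admissible one-step extensions using Lemmas~\ref{l:M-L1}--\ref{l:M-L31}, discarding those that push the Markov value above $3.9387762419922$ or create a forbidden substring; and (3) observe that the unique surviving sequence is exactly $$\overline{21}233111331132123113331112121133311121211333^{*}111212113331112122\overline{32},$$ whose Markov value is $j_1 = 3.93877624199054947868687\ldots$ by direct computation (this is the number already recorded just before the proposition). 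Since $a$ either equals this sequence or is obtained from it by a further extension that can only increase, not decrease, $\lambda_0$ at the relevant position — because the word $\overline{21}\ldots\overline{32}$ realises the \emph{supremum} $m$ at position $0$ already — we conclude $m(a)\ge j_1$.

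The main obstacle, and the part that will require the most care rather than cleverness, is step~(2): near each broken end one must argue that the one-sided periodic continuation $\overline{21}$ (resp.\ $\overline{32}$) is genuinely forced, i.e.\ that every competing finite extension either raises $\lambda_0$ past the threshold $3.9387762419922$ or is excluded by an already-established forbidden string. This is where the ``recurrence on scales'' is most delicate: the gap between $j_1$ and the threshold is only about $7\times10^{-12}$, so the continued-fraction estimates $[a_n;a_{n-1},\ldots]+[0;a_{n+1},\ldots]$ must be controlled to roughly twelve significant digits at several positions simultaneously, and one has to be sure that extending the already-long known central block does not create a \emph{new} occurrence of the self-replicating word elsewhere that would force a different (smaller) periodic pattern. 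I would handle this by the same bookkeeping used throughout Section~\ref{s:M-L}: list the forbidden strings accumulated so far, feed them to the comparison lemmas, and check that at each branching only the claimed child survives — the novelty being only that, because we are now \emph{minimising} rather than excluding, we keep the surviving branch and verify its Markov value equals $j_1$ rather than bounding it away.
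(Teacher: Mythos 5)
Your overall strategy---pin down the admissible bi-infinite extensions of the central block digit by digit, using the accumulated forbidden strings, and show that the extension minimising $\lambda_0$ is exactly the sequence $\overline{21}233111331132123113331112121133311121211333^{*}111212113331112122\overline{32}$ defining $j_1$---is the same as the paper's. But two steps would fail as you have written them. First, you cannot invoke Corollary~\ref{c:M-L2} here, not even for the left side via transposition: its hypothesis is $m(x)<3.938776241990046$, which is strictly \emph{stronger} than the bound $m(a)<3.9387762419922$ you are given, and its conclusion is in fact false in the present setting---the left tail of the extremal sequence is not $\overline{11121211333}$ but eventually $\overline{21}$, after only two further ``periods''. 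The paper instead applies the individual Lemmas~\ref{l:M-L19}, \ref{l:M-L21}, \ref{l:M-L24} and \ref{l:M-L26} directly; their numerical lower bounds ($3.93877687$, $3.938776282$, $3.938776242699$, $3.93877624206$) all exceed $3.9387762419922$, so they do exclude the competing branches under the weaker hypothesis, and after finitely many such steps other lemmas take over and force the transition block $2331113311321231133\ldots$ and then $\overline{21}$.

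Second, your closing argument is not sound: there is no ``unique surviving sequence'' (the interval $[j_1,3.9387762419922)$ is nonempty, so many admissible extensions exist), and the observation that the $j_1$-sequence ``realises the supremum at position $0$'' is not the relevant point. What must be proved is that \emph{every} admissible extension $w$ of the central block satisfies $\lambda_0(w)\ge j_1$. The paper does this by a greedy minimisation: at each step the monotonicity of continued fractions in a single digit (largest admissible digit at odd distance from the star, smallest at even distance) gives $\lambda_0(a)\ge\lambda_0(\text{specific longer word})$, and admissibility is decided by the forbidden strings together with two new facts derived on the spot from Lemmas~\ref{l:M-L2} and~\ref{l:M-L3}, namely that $2231$ and $3231$ can never be extended and hence are forbidden; iterating yields $\lambda_0(a)\ge j_1$. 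Your step (2) only tests whether an extension ``pushes the Markov value above the threshold or creates a forbidden substring''---that is the admissibility test, not the comparison between surviving branches, which is the actual content of the minimisation.
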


\begin{proof}
As in Lemma~\ref{l:M-L31}, we are forced to have  
$$m(a) = \lambda_0(...12121133311121211333^{*}111212113331112122...).$$
Therefore, our task is reduced to check that if 
$$m(a) = \lambda_0(...12121133311121211333^{*}111212113331112122...),$$ 
then one actually has $m(a)\geq j_1$. For this sake, observe that 
\[\lambda_{0}(a)\geq\lambda_{0}(...112121133311121211333^{*}111212113331112122...).\] 
At this point, Lemmas~\ref{l:M-L19}, \ref{l:M-L21}, \ref{l:M-L24} and \ref{l:M-L26} force us to have
\[\lambda_{0}(a)\geq\lambda_{0}(...113331112121133311121211333^{*}111212113331112122...).\] 
Hence, 
\[\lambda_{0}(a)\geq\lambda_{0}(...123113331112121133311121211333^{*}111212113331112122...)\] 
since $131$, $32311$ and $22311$ are forbidden (cf. Lemmas~\ref{l:M-L1} and~\ref{l:M-L3}). It follows from Lemma~\ref{l:M-L5} (iii) that 
\[\lambda_{0}(a)\geq\lambda_{0}(...132123113331112121133311121211333^{*}111212113331112122...).\] 
After Lemmas~\ref{l:M-L2},~\ref{l:M-L4} (i),~\ref{l:M-L5} (i), one has  
\[\lambda_{0}(a)\geq\lambda_{0}(...3111331132123113331112121133311121211333^{*}111212113331112122...).\] 
By Lemmas~\ref{l:M-L1}(i),~\ref{l:M-L3} (i),~\ref{l:M-L4} (i),~\ref{l:M-L6} (i), the strings $131$, $23111$ and $3331113$ are forbidden, so that 
\[\lambda_{0}(a)\geq\lambda_{0}(\overline{21}233111331132123113331112121133311121211333^{*}111212113331112122...).\]

We also have that 
\[\lambda_{0}(a)\geq\lambda_{0}(\overline{21}233111331132123113331112121133311121211333^{*}1112121133311121223).\]

We claim that $a$ cannot contain $2231$. Indeed, Lemma~\ref{l:M-L2} forbids $22313$ and $22312$ since both contain $313$ or $312$, while Lemma~\ref{l:M-L3} forbids $22311$. So we see that $2231$ can never be extended.

We also claim that $a$ cannot contain $3231$. Indeed, Lemma~\ref{l:M-L2} forbids $32313$ and $32312$ since both contained $313$ or $312$, while Lemma~\ref{l:M-L3} forbids $32311$. So we see that $3231$ can never be extedned.

Therefore, since $2231$ is forbidden,
\[\lambda_{0}(a)\geq\lambda_{0}(\overline{21}233111331132123113331112121133311121211333^{*}11121211333111212232).\]
We also have that $3231$ is forbidden and so we find that
\[\lambda_{0}(a)\geq\lambda_{0}(\overline{21}233111331132123113331112121133311121211333^{*}111212113331112122\overline{32})=j_{1}.\]
\end{proof}

\begin{proposition}
The open interval $J=(j_{0},j_{1})$ is a maximal gap of $L$.
\end{proposition}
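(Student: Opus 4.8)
The plan is to show $L\cap(j_0,j_1)=\varnothing$; since $j_0,j_1\in L$ and $L$ is a closed set, this immediately makes $(j_0,j_1)$ a maximal gap of $L$. So assume, for contradiction, that $\ell\in L$ with $j_0<\ell<j_1$, and fix $\underline a\in\{1,2,3\}^{\mathbb Z}$ with $\limsup_{j\to+\infty}\lambda_j(\underline a)=\ell$ (the digits may be taken in $\{1,2,3\}$ since $\ell<4$). Because $\ell<j_1<3.9387762419922$ there is $N$ with $\lambda_j(\underline a)<3.9387762419922$ for all $j\ge N$, and because $\ell>j_0>3.93877609$ there are infinitely many ``near-$\ell$ positions'' $p$, i.e.\ positions with $\lambda_p(\underline a)$ as close to $\ell$ as we like and, in particular for $p$ large, with $\lambda_p(\underline a)\in(3.93877609,3.938776505)\supset(j_0,j_1)$.

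The crucial point is that the machinery of Section~\ref{s:M-L} is \emph{local}: each lemma there concerns a central block together with a window of bounded (or one‑sided) size, and the hypotheses phrased as ``a string is forbidden'' only need to hold inside that window, since a forbidden string appearing inside the window would force $\lambda$ to exceed a threshold at a position inside the window. Hence, for any sufficiently large near-$\ell$ position $p$, Corollary~\ref{c:M-L1}, the self-replication behind Corollary~\ref{c:M-L2}, and Propositions~\ref{p:M-L1}--\ref{p:M-L2} apply in localized form to $\sigma^p(\underline a)$: the relevant $\lambda_j(\underline a)$, for $j\ge p-C_0$ (with $C_0$ a fixed constant), all lie below $3.9387762419922$, which is below every threshold used in Section~\ref{s:M-L} with the single exception of the one in Lemma~\ref{l:M-L31} — and that exception is exactly the branch isolated as Proposition~\ref{p:M-L1}(i). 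In particular, the localized Corollary~\ref{c:M-L1} forces, up to transposition, $a_{p-9}\cdots a_{p+10}=1121211333^{*}1112121133$.

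Running the dichotomy of Section~\ref{s:M-L} at each sufficiently large near-$\ell$ position $p$ then yields one of three alternatives, each contradictory. \emph{(A)} $\underline a$ is forced to be periodic with period $11121211333$ on the ray $[p+1,+\infty)$ — then $\limsup_{j\to+\infty}\lambda_j(\underline a)=m(\overline{11121211333})=j_0\ne\ell$. \emph{(B)} (the transposed case) $\underline a$ is forced to be periodic with a rotation of $11121211333$ on a ray $(-\infty,p-C_0]$ — then any near-$\ell$ position $p'$ with $p'\le p-C_0$ and $p-p'$ large (which exists once $p$ is chosen large enough) lies inside this periodic region, so the estimate comparing $\lambda$-values of sequences agreeing on long blocks gives $\lambda_{p'}(\underline a)\le j_0+o(1)<\ell$, contradicting that $p'$ is a near-$\ell$ position. \emph{(C)} the analysis ``escapes'' through Proposition~\ref{p:M-L1}(i) (or its transpose), so that $\underline a$ contains $21133311121211333^{*}111212113331112122$ (up to transposition) within distance $C_0$ of $p$; then the localized Proposition~\ref{p:M-L2} produces a position $q$ with $|q-p|\le 2C_0$ and $\lambda_q(\underline a)\ge j_1$. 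If (A) or (B) occurs for even one large near-$\ell$ position we are done; otherwise (C) occurs for all of them, so taking near-$\ell$ positions $p_k\to+\infty$ we get $q_k\to+\infty$ with $\lambda_{q_k}(\underline a)\ge j_1>\ell$, whence $\limsup_{j\to+\infty}\lambda_j(\underline a)\ge j_1>\ell$, again a contradiction. Therefore $\ell\notin L$, so $L\cap(j_0,j_1)=\varnothing$ and, as noted at the outset, $(j_0,j_1)$ is a maximal gap of $L$.

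The main obstacle is making the ``localization'' precise: one must go through each lemma of Section~\ref{s:M-L} and confirm that its proof inspects only finitely many (or a one-sided ray of) $\lambda$-values near the central block, so that the global hypothesis ``$m(x)<\text{threshold}$'' can be replaced by the local fact ``$\lambda_j(\underline a)<\text{threshold}$ for all relevant $j$'', which is automatic at large positions since $\limsup_{j\to+\infty}\lambda_j(\underline a)=\ell$ sits below the thresholds. The remaining ingredients — compactness of $\{1,2,3\}^{\mathbb Z}$, continuity of $\lambda_0$, the identity $\limsup_{j\to+\infty}\lambda_j(\overline{11121211333})=m(\overline{11121211333})=j_0$, and the elementary estimate bounding the change of $\lambda$ under modifying a sequence far from the base point — are routine.
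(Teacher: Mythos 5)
Your strategy is viable but genuinely different from --- and considerably heavier than --- the paper's. The paper's proof is two sentences: since $L$ is the closure of the set of Markov values of \emph{periodic} sequences, it suffices to check that no periodic sequence has Markov value in $J$; for periodic $a$ with $j_0\leq m(a)\leq j_1<3.9387762419922$, Proposition~\ref{p:M-L1} leaves only $a=\overline{11121211333}$ (the one-sided eventually periodic alternatives cannot be globally periodic unless they degenerate to this), whence $m(a)=j_0\notin J$, or $a$ contains $21133311121211333^{*}111212113331112122$, whence Proposition~\ref{p:M-L2} gives $m(a)\geq j_1\notin J$. Crucially, for such $a$ the hypothesis $m(a)=\lambda_0(a)$ is a genuine global maximum, so Propositions~\ref{p:M-L1}--\ref{p:M-L2} apply exactly as stated and no localization is needed. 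Your route works directly from the $\limsup$ definition of $L$, so you must re-prove localized versions of Corollary~\ref{c:M-L1}, the self-replication, and Propositions~\ref{p:M-L1}--\ref{p:M-L2} along the tail of the sequence; your trichotomy (A)/(B)/(C) is correctly organized, and your bookkeeping that the only threshold of Section~\ref{s:M-L} lying below $3.9387762419922$ is the one in Lemma~\ref{l:M-L31} (precisely the branch isolated in Proposition~\ref{p:M-L1}) is right. Two caveats: first, the localization you defer as ``the main obstacle'' is in fact the bulk of the proof on your route, and it does not arise at all on the paper's route; second, in case (B) the localized self-replication forces periodicity only on a window $[N',p-C_0]$ (the left end being where the tail condition $\lambda_j<3.9387762419922$ begins to hold), not on all of $(-\infty,p-C_0]$, so you must choose the earlier near-$\ell$ position $p'$ far from \emph{both} ends of that window --- which is achievable by fixing $p'$ large first and then taking $p\gg p'$, but should be said. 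Unless you have a reason to avoid the density of periodic Markov values in $L$, the paper's shorter argument is preferable.
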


\begin{proof}
If $a$ is periodic and $j_{0} \leq m(a) \leq j_{1} < 3.9387762419922$, then Proposition~\ref{p:M-L1} tells us that $a = \overline{11121211333}$ in which case $m(a) = j_{0} \not\in J$, or $a$ contains $21133311121211333^{*}111212113331112122$. In the latter case, Proposition~\ref{p:M-L2} then tells us that $m(a) \geq j_{1}$ and so again $m(a) \not\in J$. Therefore, $J$ does not contain the Markov value of any periodic sequence and so, since the Lagrange spectrum is the closure of the set of Markov values of periodic sequences, we conclude that $J$ is indeed a maximal gap of $L$.
\end{proof}

\begin{proposition}\label{p:M-L3}
Let $a\in\{1,2,3\}^{\mathbb{Z}}$ be a sequence with Markov value 
$j_{0} < m(a) = \lambda_{0}(a) < j_{1}$
then $m_{1}\leq m(a) \leq m_{4}$, where
\[m_{1} = m(\overline{12}3311133113212121133311121211333^{*}\overline{11121211333})\]
\[=3.9387762419810960597...\]
and
\[m_{4} = m(\overline{12}331113311321231133311121211333^{*}\overline{11121211333})\]
\[=3.938776241989784909....\]
\end{proposition}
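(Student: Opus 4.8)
The plan is to combine Propositions~\ref{p:M-L1} and~\ref{p:M-L2} with one further round of the branch-and-bound analysis of Section~\ref{s:M-L}, carried out this time on the \emph{left} of the block $1133311121211333^{*}$. First note that the transposition $b_i := a_{-i}$ satisfies $\lambda_n(b) = \lambda_{-n}(a)$ — because $\lambda_n(a) = a_n + [0;a_{n-1},a_{n-2},\dots] + [0;a_{n+1},a_{n+2},\dots]$ is symmetric under reversal — so it preserves both the Markov value and the property $m=\lambda_0$, and hence there is no loss in working up to transposition. Since $j_0 \le m(a)=\lambda_0(a) < j_1 < 3.9387762419922$, Proposition~\ref{p:M-L1} applies, so up to transposition $a$ has one of its three listed forms; the first of these contains the block $21133311121211333^{*}111212113331112122$, which by Proposition~\ref{p:M-L2} forces $m(a)\ge j_1$, a contradiction. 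Therefore $a = \dots Y\,1133311121211333^{*}\overline{11121211333}$ with $Y\in\{2,3\}$, and the entire right tail $x_1x_2\dots$ is already pinned down.

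It remains to control the left tail. Writing $c_k := a_{-k}$, the digits $c_1\dots c_{15} = 3,3,1,1,2,1,2,1,1,1,3,3,3,1,1$ are fixed, and
\[\lambda_0(a) = 3 + [0;3,3,1,1,2,1,2,1,1,1,3,3,3,1,1,c_{16},c_{17},\dots] + [0;\overline{1,1,1,2,1,2,1,1,3,3,3}].\]
Since the fixed prefix $c_1\dots c_{15}$ has odd length, the alternating monotonicity of continued fractions makes the right-hand side a strictly increasing function of $\beta := [c_{16};c_{17},c_{18},\dots] = [a_{-16};a_{-17},\dots]$. Reading the left tails off the two sequences in the statement, one has $\beta = [2;1,2,1,2,3,1,1,3,3,1,1,1,3,3,\overline{2,1}]$ for the sequence defining $m_1$ and $\beta = [3;2,1,2,3,1,1,3,3,1,1,1,3,3,\overline{2,1}]$ for the one defining $m_4$; so the proposition is equivalent to showing that for every admissible $a$ the value of $\beta$ lies between these two.

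To prove this I would continue the case analysis of the local-uniqueness subsection, now growing words to the left of $Y1133311121211333$ (and occasionally back to the right, as needed to evaluate a Markov value). At each node one prunes a branch as soon as (a) it contains a word already shown to be forbidden for Markov values below roughly $3.94$ (such as $131$, $312$, $313$, $3331113$, $123111$, $22311$, $32311$, $2231$, $3231$, and so on), (b) it would force $\lambda_n(a) > \lambda_0(a)$ for some $n$, contradicting $m(a)=\lambda_0(a)$, or (c) it would force $\lambda_0(a)\le j_0$ or $\lambda_0(a)\ge j_1$. Carrying this out shows $c_{16}=Y\in\{2,3\}$, that $Y=2$ is necessarily followed by $1,2,1,2$ and $Y=3$ by $2,1,2$, and that in every case the admissible continuations — a dynamically defined Cantor set, as mentioned in the remark after Theorem~\ref{t:main1} — have $\beta$ sandwiched between the two displayed endpoints. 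Since $\lambda_0$ is increasing in $\beta$, this gives $m_1 \le m(a) = \lambda_0(a) \le m_4$.

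The main obstacle is exactly this left-side case analysis: it is combinatorially on the same scale as the local-uniqueness subsection, it requires simultaneously tracking forbidden words, the ``$\lambda_n\le\lambda_0$'' constraints, and two-sided Markov-value comparisons, and — several of those comparisons separating only at the twelfth-to-fourteenth decimal place — every estimate has to be carried out to very high precision. A secondary subtlety is that the extremal left tails realising $m_1$ and $m_4$ are \emph{not} the ones obtained by greedily minimising or maximising digit-by-digit (for instance taking $c_{17}=3$ after $c_{16}=2$, which would decrease $\beta$, is eliminated a few steps later by the pruning rules), so the extremality of the genuine minimiser and maximiser must be argued directly rather than read off from monotonicity alone.
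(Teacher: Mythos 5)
Your reduction is the same as the paper's: Propositions~\ref{p:M-L1} and~\ref{p:M-L2} pin down the right tail and leave only the two branches $a=\dots Y\,1133311121211333^{*}\overline{11121211333}$ with $Y\in\{2,3\}$, and the problem becomes one about the left tail. Your explicit monotonicity observation --- that $\lambda_{0}(a)$ is increasing in $\beta=[a_{-16};a_{-17},\dots]$ because the fixed prefix $a_{-1}\dots a_{-15}$ has odd length --- is correct and is a clean way to see why the $Y=2$ branch must supply the minimum and the $Y=3$ branch the maximum (the paper leaves this cross-branch comparison implicit). So the skeleton is right.

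The gap is that the left-tail case analysis, which you defer with ``carrying this out shows\dots'', \emph{is} the proof: everything before it is a two-line consequence of the earlier propositions. The paper executes it digit by digit, at each step invoking a specific forbidden word from Lemmas~\ref{l:M-L1}--\ref{l:M-L16} to eliminate the greedy-optimal digit or to force the next one, yielding chains $m(a)\geq m(\cdot)$ down to $m_{1}$ and $m(a)\leq m(\cdot)$ up to $m_{4}$; none of these steps appears in your proposal, and the claims you assert about the forced digits after $Y$ are stated without justification. Moreover, one step in the $Y=2$ branch is not of the ``prune by forbidden words'' type your sketch anticipates: after the forced block $\dots12121133311121211333^{*}\overline{11121211333}$, extending left by $1$ triggers the self-replication of Corollary~\ref{c:M-L2}, so one must argue recursively that infinitely many repetitions would give $a=\overline{11121211333}$ and hence $m(a)=j_{0}$ (excluded), and that a $2$ cannot appear after an extra period because that would force $\lambda_{-11}(a)>\lambda_{0}(a)$, contradicting $m(a)=\lambda_{0}(a)$. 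Without this recursion and the explicit forbidden-word bookkeeping, the proposition is not proved.
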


\begin{proof}
By Propositions~\ref{p:M-L1} and~\ref{p:M-L2}, we have that $a = ...21133311121211333^{*}\overline{11121211333}$ or $a = ...31133311121211333^{*}\overline{11121211333}$. 

We begin by analysing the former. Since $32113331112121$ and $22113331112121$ are forbidden by Lemma~\ref{l:M-L12}, $11211333111212$ is forbidden by Lemma~\ref{l:M-L13}, and $32121133311121211$ is forbidden by Lemma~\ref{l:M-L15}, we have 
$$a=\dots 12121133311121211333^*\overline{11121211333}.$$
Since $312$ is forbidden, this sequence extends to the left with $1$ or $2$. Suppose that it extends by a $1$. By Corollary \ref{c:M-L2}, and the same arguments we just made, we see that 
$$a= \dots 12121133311121211333^{***}11121211333^*\overline{11121211333}$$ 
and, once again, this word could extend on the left with $1$ or $2$. However, an extension with $2$ is not possible because this would force $\lambda_{-11}(a)>\lambda_0(a)=m(a)$, a contradiction. Continuing would leave us with $a = \overline{11121211333}$, so $m(a) = j_{0}$, which is also a contradiction.
So we must have
$$a= \dots 212121133311121211333^*\overline{11121211333}.$$ 

Now
\[m(a) \geq m(13212121133311121211333^{*}\overline{11121211333}).\]
By Lemma~\ref{l:M-L2}, $313$ and $213$ are forbidden in $a$ and so
\[m(a) \geq m(113212121133311121211333^{*}\overline{11121211333}).\]
Lemmas~\ref{l:M-L4} and~\ref{l:M-L5} forbid $111321$ and $2113212$, so we must have
\[m(a) \geq m(3113212121133311121211333^{*}\overline{11121211333}).\]
Similar arguments allow us to show that
\[m(a) \geq m(311133113212121133311121211333^{*}\overline{11121211333}).\]
Lemma~\ref{l:M-L1} forbids $131$. We claim that $23111$ is also forbidden. Lemma~\ref{l:M-L3} forbids $223111$ and $323111$ while Lemma~\ref{l:M-L4} forbids $123111$ and so $23111$ is never extendible and so must be forbidden. Therefore,
\[m(a) \geq m(3311133113212121133311121211333^{*}\overline{11121211333}).\]
Lemma~\ref{l:M-L6} prevents $3331113$ and so
\[m(a) \geq m(23311133113212121133311121211333^{*}\overline{11121211333}).\]
From here on, $312$ being forbidden by Lemma~\ref{l:M-L2} gives us that 
\[m(a) \geq m(\overline{12}3311133113212121133311121211333^{*}\overline{11121211333}) = m_{1}.\]


Now analysing the possibility that $a =  ...31133311121211333^{*}\overline{11121211333}$. 
%
Since $131$ is forbidden, we have
\[m(a) \leq m(231133311121211333^{*}\overline{11121211333}).\]
Now, we are forbidden to have $32311$ and $22311$ so we must have
\[m(a) \leq m(1231133311121211333^{*}\overline{11121211333}).\]
Next, since $1123113$ is forbidden, we must have
\[m(a) \leq m(21231133311121211333^{*}\overline{11121211333}).\]
Then
\[m(a) \leq m(321231133311121211333^{*}\overline{11121211333}).\]
Now we have
\[m(a) \leq m(1321231133311121211333^{*}\overline{11121211333}).\]
Since $313$ and $213$ are forbidden, we must have
\[m(a) \leq m(11321231133311121211333^{*}\overline{11121211333}).\]
Now $111321$ and $211321$ are forbidden so we must have
\[m(a) \leq m(311321231133311121211333^{*}\overline{11121211333}).\]
Then
\[m(a) \leq m(13311321231133311121211333^{*}\overline{11121211333}).\]
Since $313$ and $213$ are forbidden we get
\[m(a) \leq m(113311321231133311121211333^{*}\overline{11121211333}).\]
Then
\[m(a) \leq m(31113311321231133311121211333^{*}\overline{11121211333}).\]
Now $131$ is forbidden and extending by $2$ would lead to one of $32311$, $22311$, or $123111$ all of which are forbidden. So we obtain
\[m(a) \leq m(331113311321231133311121211333^{*}\overline{11121211333}).\]
We have that $3331113$ is forbidden and so we must have
\[m(a) \leq m(2331113311321231133311121211333^{*}\overline{11121211333}).\]
From here we obtain
\[m(a) \leq m(\overline{12}331113311321231133311121211333^{*}\overline{11121211333}) = m_{4}.\]
This completes the proof.
\end{proof}

An immediate consequence of our discussion so far is the following statement: 

\begin{corollary} $\HD((M\setminus L)\cap (j_0, j_1)) = \HD(K)$ where $K$ is the Gauss--Cantor set of continued fractions with entries $1$, $2$, $3$ not containing the following forbidden strings (nor their transposes): 
\begin{itemize}
\item $131$, $312$, $313$, $22311$, $32311$,  $123111$, $123112$, $1123113$, $3331113$, $2111333111212$, 
\item $11113331112121$, $11113331112122$, $22113331112121$, $32113331112121$, 
\item $111113331112123$, $112113331112121$, $211113331112123$, $3111133311121232$,
\item $3111133311121233$, $2121133311121212$, $331133311121212$, $22121133311121211$, 
\item $32121133311121211$, $121211333111212111$, $121211333111212112$, 
\item $21231133311121212$, $11212113331112121133$.
\end{itemize}
\end{corollary}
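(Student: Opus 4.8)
The plan is to show that the local study of $M\setminus L$ near $3.938$ carried out in Section~\ref{s:M-L} \emph{identifies} the intersection $(M\setminus L)\cap(j_0,j_1)$ with a set whose Hausdorff dimension equals that of the Gauss--Cantor set $K$, and then to read off the forbidden strings directly from the lemmas that were invoked. First I would recall from Corollary~\ref{c:M-L2} and Proposition~\ref{p:M-L3} that any $a\in\{1,2,3\}^{\mathbb Z}$ with $j_0<m(a)<j_1$ is, up to transposition, eventually of the form $\dots 1133311121211333^*\overline{11121211333}$ (the self-replicating tail on the right), while its Markov value is realised in the central block. The key structural point is that such an $a$ lies in $M\setminus L$ precisely because the Lagrange value $L(a)$ is strictly below $m(a)$: the right tail is eventually periodic, so the only way $\lambda_j(a)$ could approach $m(a)$ again is through another copy of the central window, and the forbidden-string analysis rules this out. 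Thus $(M\setminus L)\cap(j_0,j_1)$ is in bijection, via the map sending $a$ to its (essentially unique) left tail $(x_i)_{i\le 0}$ after the forced block, with the set of one-sided sequences avoiding a finite list of words.

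Next I would make the dimension identification precise. The standard mechanism (used already in \cite{MM20} and in \cite[\S4.6.5]{MMPV22}) is: if the set of admissible left tails is a subshift of finite type $\Sigma'\subset\{1,2,3\}^{\N}$ defined by a finite list of forbidden words, then the corresponding set of continued fraction values $\{[0;a_1,a_2,\dots]\mid (a_i)\in\Sigma'\}$ is a dynamically defined (Gauss--)Cantor set $K$, and the projection $a\mapsto [0;a_1,a_2,\dots]$ is Lipschitz with Lipschitz inverse on cylinders, hence bi-Hölder, so it preserves Hausdorff dimension. Combining this with the previous paragraph gives $\HD((M\setminus L)\cap(j_0,j_1)) = \HD(\Sigma') = \HD(K)$, where the first equality uses that adding a fixed finite prefix and a fixed periodic suffix to every sequence does not change the dimension of the resulting set of Markov values (the Markov value depends bi-Lipschitz-continuously on the relevant finite central window once the tails are pinned down).

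Finally, to produce the explicit list of forbidden strings I would simply collect every string that the lemmas of Section~\ref{s:M-L} declared forbidden and that is actually needed to constrain the admissible sequences with $j_0<m(a)<j_1$: $131$ from Lemma~\ref{l:M-L1}(i) (via $\lambda_0(13^*1)>4.11$), $313$ and $312$ from Lemma~\ref{l:M-L2}, $22311$ and $32311$ from Lemma~\ref{l:M-L3}(i), $123111$ from Lemma~\ref{l:M-L4}(i), $123112$ from Lemma~\ref{l:M-L5}(i), $1123113$ from Lemma~\ref{l:M-L5}(iii), $3331113$ from Lemma~\ref{l:M-L6}(i), $2111333111212$ from Lemma~\ref{l:M-L11}, the four length-fourteen strings from Lemma~\ref{l:M-L12}(i)--(iii), $111113331112123$ and $211113331112123$ from Lemma~\ref{l:M-L13}(i), $112113331112121$ from Lemma~\ref{l:M-L13}(ii), $3111133311121232$ and $3111133311121233$ from Lemma~\ref{l:M-L14}(iii), $2121133311121212$ and $331133311121212$ from Lemma~\ref{l:M-L14}(iv), $22121133311121211$ and $32121133311121211$ from Lemma~\ref{l:M-L15}, $121211333111212111$ and $121211333111212112$ from Lemma~\ref{l:M-L16}(i), $21231133311121212$ from Lemma~\ref{l:M-L16}(ii), and $11212113331112121133$ from Lemma~\ref{l:M-L17}; one then checks that together with their transposes these suffice to force the self-replication of Corollary~\ref{c:M-L2}, so the Gauss--Cantor set they cut out is exactly the one carrying the local dimension. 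I expect the main obstacle to be the bookkeeping: verifying that this particular finite list is simultaneously \emph{sufficient} (it forces the replication dynamics, so no admissible tail is over-counted) and \emph{not wasteful in a way that changes the dimension} (i.e.\ that $K$ as defined really is the set supporting $(M\setminus L)\cap(j_0,j_1)$ and not a strictly larger or smaller Cantor set), which amounts to re-tracing the chain of lemmas from Corollary~\ref{c:M-L1} through Proposition~\ref{p:M-L3} and confirming that every branching decision there was governed by one of the listed strings.
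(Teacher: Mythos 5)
Your overall strategy --- identify $(M\setminus L)\cap(j_0,j_1)$ with a symbolic set cut out by the forbidden words of Section~\ref{s:M-L} and transfer the dimension through the bi-Lipschitz continued-fraction projection --- is the same as the paper's, but three concrete points in your argument do not hold up as stated. First, the claim that a sequence $a$ with $j_0<m(a)<j_1$ contributes to $M\setminus L$ ``because $L(a)<m(a)$'' is not a valid inference: $m(a)$ could a priori be the Lagrange value of a \emph{different} sequence. What makes $(M\setminus L)\cap(j_0,j_1)=M\cap(j_0,j_1)$ is the preceding proposition that $(j_0,j_1)$ is a maximal gap of $L$, and you need to invoke it. Second, your asserted ``bijection'' between $(M\setminus L)\cap(j_0,j_1)$ and the admissible left tails conflates two inequalities that have to be proved separately. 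For the upper bound one only gets an inclusion of $M\cap(j_0,j_1)$ into the union of a \emph{countable set} and a bi-Lipschitz copy of $K$: the countable part comes from the sequences whose left tail $y$ is also eventually periodic ($y=\delta\overline{11121211333}$ with $\delta$ finite), and it must be split off before comparing dimensions. For the lower bound one must exhibit inside $(M\setminus L)\cap(j_0,j_1)$ a set bi-Lipschitz homeomorphic to $K$; this is not automatic from the upper-bound analysis, because one has to check that \emph{every} admissible tail, glued to the central block by a suitable connecting word, produces a Markov value that lands in $(j_0,j_1)$ and is attained at the central position. The paper does this with the explicit family $m(y^t212121133311121211333^*\overline{11121211333})$ where $y^t21212$ avoids the forbidden strings; your proposal omits this verification entirely.

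Third, your attribution of the word $11212113331112121133$ to Lemma~\ref{l:M-L17} misreads its role. That word is the self-replicating word $\gamma$ of Corollary~\ref{c:M-L2}: it is \emph{not} forbidden in the admissible bi-infinite sequences (every such sequence contains it in its central window), and no numerical lemma excludes it. It is excluded from the definition of $K$ precisely because an occurrence of $\gamma$ (or $\gamma^t$) in the one-sided tail forces, via the self-replication of Corollary~\ref{c:M-L2}, that tail to be eventually periodic, i.e.\ pushes the sequence into the countable exceptional set. If you treat $\gamma$ as just another forbidden string coming from an inequality, the decomposition into a countable set plus a copy of $K$ --- which is the heart of the upper bound --- disappears from your argument. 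The remaining bookkeeping of which lemma forbids which string is essentially correct, but these three points are where the proof actually lives.
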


\begin{proof} Denote by $\mathcal{F}$ the set consisting of the strings above and their transposes. By Corollary \ref{c:M-L1}, if $x\in\{1,2,3\}^{\mathbb{Z}}$ and $j_0<m(x)<j_1$, then $\dots x_{-1} x_0^* x_1\dots = \dots 1121211333^*1112121133 \dots$ (up to transposition). Furthermore, the discussion before Corollary \ref{c:M-L1} says that $x$ doesn't contain the strings in $\mathcal{F}\setminus\{\gamma, \gamma^t\}$, where $\gamma=11212113331112121133$ is the ``self-replicating'' word and $\gamma^t$ is its transpose.  

By Propositions \ref{p:M-L1} and \ref{p:M-L2}, one actually has that 
$$x=y^t1133311121211333^{*}\overline{11121211333}$$ 
where $y\in\{1,2,3\}^{\mathbb{N}}$ doesn't contain strings from $\mathcal{F}\setminus\{\gamma,\gamma^t\}$. By Proposition \ref{p:M-L3} and Corollary \ref{c:M-L2}, either $y$ has the form $y=\delta\overline{11121211333}$ where $\delta$ is a \emph{finite} string or $y$ doesn't contain a string from $\mathcal{F}$. In particular, $M\cap (j_0,j_1)$ is included in the union of a countable set and a set which is bi-Lipschitz homeomorphic to $K$, so that $\HD((M\setminus L)\cap (j_0, j_1)) = \HD(M\cap(j_0,j_1)) \leq \HD(K)$. Since it is not hard to see that $(M\setminus L)\cap (j_0,j_1)$ contains the set 
$$\{m(y^t212121133311121211333^{*}\overline{11121211333}): y^t21212 \textrm{ doesn't contain strings from } \mathcal{F} \}$$ 
which is bi-Lipschitz homeomorphic to $K$, the argument is now complete. 
\end{proof} 

Performing calculations using the methods of Jenkinson-Pollicot~\cite{JP01}, we obtained heuristics suggesting that $0.593 < \HD(K') < \HD(K'') < 0.595$, where $K'$ is the Gauss--Cantor set of continued fractions with entries $1$, $2$, $3$ not containing the forbidden strings $131$, $312$, $313$, $22311$, $32311$,  $123111$, $123112$, $1123113$, $3331113$, and $11333111212$ (nor their transposes), and $K''$ is the Gauss--Cantor set of continued fractions with entries $1$, $2$, $3$ not containing the forbidden strings $131$, $312$, $313$, $22311$, $32311$,  $123111$, $123112$, $1123113$, $3331113$ (nor their transposes). Since the every forbidden string for $K$ has a subword that is a forbidden string for $K'$, we see that $K'\subset K$. Similarly, since the forbidden strings for $K''$ are a strict subset of those for $K$, we have $K\subset K''$. Hence we expect the heuristic
\[0.593 < \HD(K) < 0.595\]
to be true which would also give us that $\HD(M\setminus L) > 0.593$ - an improved lower bound.

\section{Freiman's gap}\label{s:Frei-gap}

In \cite[Section 10, pp.66--71]{Fr75}, G. Freiman proved the following result: 

\begin{theorem}\label{t:Freiman-last-gap} One has $M\cap(\nu,\mu)=\varnothing$ where 
$$\nu=[4;3,1,3,1,3,\overline{4,4,4,3,2,3}]+[0;3,1,3,1,2,1,1,3,3,\overline{3,1,3,1,2,1}]$$
and 
$$\mu=[4;4,3,2,2,\overline{3,1,3,1,2,1}]+[0;3,2,1,1,\overline{3,1,3,1,2,1}]$$
\end{theorem}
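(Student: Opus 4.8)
The plan is to run the ``local uniqueness'' and ``self-replication'' scheme of Section~\ref{s:M-L}, with the window $(\nu,\mu)$ in place of $(j_0,j_1)$; this is also the exact form in which the intermediate lemmas will be needed for the recurrence-on-scales argument of Section~\ref{s:accum-gaps}. First I would set up the reduction: since any entry $\geq 5$ makes the Markov value $\geq 5$, it suffices to work in $\{1,2,3,4\}^{\mathbb{Z}}$, and if $x\in\{1,2,3,4\}^{\mathbb{Z}}$ had $m(x)\in(\nu,\mu)$ then, after a shift, we may assume $\nu<\lambda_0(x)\leq m(x)<\mu$, so in particular $\lambda_n(x)<\mu$ for every $n$. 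This last bound is the engine of the argument: any finite word $W$ all of whose bi-infinite completions satisfy $\lambda_0\geq\mu$ is \emph{globally forbidden} in $x$, since a shift to an occurrence of $W$ would contradict $\lambda_n(x)<\mu$.

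The local uniqueness phase then pins down a long central block of $x$. One starts from the vicinity of $x_0$: since $\lambda_0(x)\approx 4.528$, short continued-fraction estimates rule out $x_0\in\{1,2\}$ and the vicinities $14^{*}1$, $24^{*}1,\dots$ (these overshoot $\mu$), and also $44^{*}4$ (this undershoots $4.5<\nu$), leaving $x_0=4$ with vicinity $34^{*}3$ or $34^{*}4$ up to transposition; the only other a priori possibility, $x_0=3$, forces the vicinity $13^{*}1$ and then $x_{\pm2}\in\{2,3\}$, from which a couple more steps show $\lambda_0>\mu$, so this case drops out. From here one proceeds exactly as in Lemmas~\ref{l:M-L1}--\ref{l:M-L18}: extend the forced central word by one symbol to the left or to the right, discard an extension when all of its completions have $\lambda_0\geq\mu$ (recording the forbidden string) or all have $\lambda_0\leq\nu$ (this contradicts $\lambda_0(x)>\nu$), and continue with the survivors. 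Because $\mu-\nu\approx 2.8\times10^{-8}$, the central word has to be forced out to length of order $40$--$50$ and the continued-fraction comparisons must be accurate to roughly ten decimal digits, which is exactly the regime in which the computer search of Appendix~\ref{app:alg} is used to locate the right branch at each step.

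The self-replication phase closes the argument and, in contrast to Section~\ref{s:M-L}, now rigidifies \emph{both} tails of $x$ --- which is why $(\nu,\mu)$ is an honest maximal gap of $M$ rather than (like $(j_0,j_1)$) a gap of $L$ carrying a Cantor set of Markov values. Once the central word is long enough, it, together with the accumulated forbidden strings, is rigid enough that each extension to the right reproduces the period-six block $313121$, and an induction on scales shows every subsequent period-six block is forced the same way, so the right tail of $x$ equals $\overline{313121}$; symmetrically, the left side is driven through a fixed finite prefix into $\overline{323444}$, each deviation creating a forbidden string. The only sequences surviving both replications are shifts and transposes of the two sequences realizing $\nu$ and $\mu$ in Theorem~\ref{t:Freiman-last-gap}; the first gives $\lambda_0(x)\leq\nu$, contradicting $\lambda_0(x)>\nu$, and the second has $m(x)=\mu$, contradicting $m(x)<\mu$. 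Hence no such $x$ exists and $M\cap(\nu,\mu)=\varnothing$.

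The main obstacle is the microscopic width of the target interval: the local uniqueness phase is a long, heavily branching case analysis in which every elimination rests on a continued-fraction inequality that only becomes decisive in the eighth or ninth decimal digit, so there is essentially no slack and each branch must be followed until it either dies or visibly locks onto the periodic pattern, with careful bookkeeping of the growing list of forbidden strings. The conceptual --- rather than merely lengthy --- point is to make the self-replication genuinely ``lock in'': one has to verify that the forced central block is a fixed point of the extension procedure, so the two period-six tails are forced forever rather than merely persisting for a while, which is precisely the self-reproduction-on-scales mechanism that Section~\ref{s:accum-gaps} then exploits to produce the accumulating gaps of Theorem~\ref{t:main2}.
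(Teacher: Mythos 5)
Your overall architecture matches the paper's (which is itself an extraction of Freiman's argument): restrict to $\{1,2,3,4\}^{\mathbb{Z}}$, use short forbidden-word lemmas to force the central vicinity to be $34^*3$ or $34^*4$ up to transposition (Corollary~\ref{c:Frei-gap1}), and then dispose of the two branches separately (Corollaries~\ref{c:Frei-gap4} and~\ref{c:Frei-gap7}). The reduction and the local-uniqueness phase you describe are sound, although the forcing only needs to reach about $17$--$19$ central symbols (Corollary~\ref{c:Frei-gap3}), not $40$--$50$.

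However, your description of the final ``self-replication'' step contains a genuine error: you claim that the accumulated forbidden strings \emph{force} both tails to be periodic, so that ``the only sequences surviving both replications are shifts and transposes of the two sequences realizing $\nu$ and $\mu$.'' This is false, and it cannot be repaired as stated: by Theorem~\ref{t:main2} there are infinitely many distinct elements of $M$ accumulating at $\nu$ from below, so the $343$-branch admits infinitely many admissible, pairwise non-equivalent completions with distinct Markov values. What the forbidden strings actually give (and what the proof of Corollary~\ref{c:Frei-gap4} uses) is a one-sided, extremal statement: at each step of extending $33112131343131344$, among the digits not excluded by Lemmas~\ref{l:Frei-gap2+3}, \ref{l:Frei-gap4+5}, \ref{l:Frei-gap6-9} and \ref{l:Frei-gap14+15}, the choice that \emph{maximizes} $\lambda_0$ is the one continuing the periodic patterns $\overline{432344}$ (to the right) and $\overline{121313}$ (to the left); deviations that \emph{decrease} the value remain perfectly admissible. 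Hence the conclusion is $m(\underline{a})\leq\nu$ for the $343$-branch, and dually (minimizing) $m(\underline{a})\geq\mu$ for the $344$-branch, which is what yields $M\cap(\nu,\mu)=\varnothing$. If you tried to execute your version literally --- showing every deviation from the periodic tails is forbidden --- the argument would fail at the first step where a value-decreasing digit is allowed; you need to replace ``forced periodicity'' by this monotonicity-of-the-extremal-completion argument, keeping track of the alternating effect of each digit on the continued fraction.
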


In this section, we extract key parts of the proof of this theorem. For this sake, we restrict from now on our attention to the sequences $\underline{a}=(a_n)_{n\in\mathbb{Z}}\in(\mathbb{N}^*)^{\mathbb{Z}}$ such that 
$$4<m(\underline{a})=\lambda_0(\underline{a})<5.$$
Note that these inequalities imply that 
$$\underline{a}\in\{1,2,3,4\}^{\mathbb{Z}} \quad \textrm{and} \quad a_0\in \{3,4\}.$$

\subsection{Preliminaries}

We require the following results the proofs of which can be found in~\cite[Appendix D]{L+20}. The first determine that the central portion of a candidate sequence giving rise to Markov values in the range $(\nu,\mu)$ must be (up to transposition) $...34^{*}3...$ or $...34^{*}4...$.

\begin{lemma}\label{l:Frei-gap2+3} If $m(\underline{a})<4.55$, then $\underline{a}\in\{1,2,3,4\}^{\mathbb{Z}}$ can not contain the subwords $41$, $42$ or their transposes.   
\end{lemma}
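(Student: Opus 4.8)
The plan is to argue by contradiction: suppose $\underline{a}\in\{1,2,3,4\}^{\mathbb{Z}}$ contains one of the strings $41$ or $42$ (the transposed cases follow by symmetry of the continued-fraction formula for $\lambda_j$), and show this forces $m(\underline{a})\geq 4.55$, contradicting the hypothesis $m(\underline{a})<4.55$. The key observation is that $m(\underline{a})=\sup_n\lambda_n(\underline{a})\geq \lambda_k(\underline{a})$ for \emph{any} index $k$, so it suffices to position the window so that some $\lambda_k$ is forced to be large. Concretely, if $a_k=4$ and $a_{k+1}\in\{1,2\}$, I would estimate $\lambda_k(\underline{a})=[a_k;a_{k-1},a_{k-2},\dots]+[0;a_{k+1},a_{k+2},\dots]$ from below by using $a_k=4$ as the integer part and bounding the two continued-fraction tails.

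First I would recall the elementary monotonicity facts for continued fractions with bounded entries in $\{1,2,3,4\}$: the value $[0;b_1,b_2,\dots]$ is an increasing or decreasing function of each $b_i$ depending on the parity of $i$, and one has universal bounds $[0;\overline{4}]\le[0;b_1,b_2,\dots]\le[0;\overline{1}]$, i.e. every tail lies in $[\sqrt{2}-1,\ (\sqrt5-1)/2]\approx[0.414,0.618]$; more refined one-step bounds such as $[0;1,\dots]\ge[0;1,\overline{4}]$ etc. are what actually gets used. Then, for the left tail $[a_k;a_{k-1},\dots]$ with $a_k=4$, I get $[a_k;a_{k-1},\dots]\ge 4+[0;4,\overline{1}]$ or a similar crude bound; and for the right tail, since $a_{k+1}\in\{1,2\}$, I get $[0;a_{k+1},a_{k+2},\dots]\ge [0;2,\overline{4}]$, which already contributes roughly $0.414$. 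Adding these and checking that the sum exceeds $4.55$ is the core computation. One must be slightly careful: the worst case is $a_{k+1}=2$ (giving the smaller right-tail contribution) combined with the left-tail entries chosen adversarially small, so I would pin down $\min([4;a_{k-1},a_{k-2},\dots]) = [4;\overline{1,4}]$-type quantities, and verify $[4;\overline{1,4}]+[0;2,\overline{4}]>4.55$ — and, if that particular crude bound is too weak, refine by also noting that $a_{k-1}$ large forces the left tail up while $a_{k-1}$ small is itself constrained, splitting into a few subcases on $a_{k-1}$.

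The main obstacle I anticipate is that the totally naive bound may fall just short of $4.55$ in the single worst subcase (something like left-tail entry $a_{k-1}=1$ together with $a_{k+1}=2$), so the proof will likely require looking one or two symbols further out — e.g. using that after $41$ or $42$ the sequence cannot itself immediately produce a \emph{smaller} window value, or invoking the constraint that $m(\underline{a})<4.55$ already forbids certain neighbouring patterns — to push the estimate over the threshold. In other words, the real work is a finite case analysis on the two or three symbols adjacent to the offending $4$, combined with the standard continued-fraction comparison lemmas; everything reduces to verifying a handful of explicit numerical inequalities of the form $[4;b_1,\dots,b_r,\overline{1,4}]+[0;c_1,\dots,c_s,\overline{4,1}] > 4.55$. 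Since this is precisely the kind of estimate systematised in \cite[Appendix D]{L+20}, I would cite those comparison tools rather than reprove them, and present only the subcase breakdown and the resulting inequalities.
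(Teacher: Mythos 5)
The paper itself gives no proof of this lemma: it is quoted verbatim from \cite[Appendix D]{L+20}, and the argument there is exactly the one you outline --- place the window at the letter $4$, bound $\lambda_k(\underline{a})=[a_k;a_{k-1},\dots]+[0;a_{k+1},\dots]$ from below in each of the finitely many cases, and dispose of the transposes by the left--right symmetry of the formula. So your strategy is the right one, and in fact it closes more easily than you anticipate: no inspection of further neighbouring symbols is needed.

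However, several of the explicit extremal bounds you assert are wrong, and one is wrong in the unsafe direction. Because $[0;b_1,b_2,\dots]$ is decreasing in the odd-indexed entries and increasing in the even-indexed ones, the extreme values over tails with entries in $\{1,2,3,4\}$ are attained at the \emph{alternating} tails, not the constant ones: the minimum is $[0;\overline{4,1}]=\tfrac{\sqrt2-1}{2}\approx0.2071$ and the maximum is $[0;\overline{1,4}]=2\sqrt2-2\approx0.8284$ (this is precisely the interval appearing in Lemma~\ref{l:accum-gaps5}), whereas your claimed range $[\,[0;\overline{4}],[0;\overline{1}]\,]=[\sqrt2-1,\tfrac{\sqrt5-1}{2}]$ contains neither endpoint. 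Concretely, your inequality $[0;a_{k+1},a_{k+2},\dots]\ge[0;2,\overline{4}]\approx0.414$ is false ($[0;2,\overline{1,4}]=\tfrac{1}{2\sqrt2}\approx0.354$ is smaller), and $[4;\overline{1,4}]$ is the \emph{maximum}, not the minimum, of $[4;a_{k-1},\dots]$. With the correct extremals the single worst case is $a_k a_{k+1}=42$ with left tail $\overline{41}$ and right tail $2\overline{14}$, giving
\[
\lambda_k(\underline{a})\;\ge\;[4;\overline{4,1}]+[0;2,\overline{1,4}]\;=\;4+\frac{\sqrt2-1}{2}+\frac{1}{2\sqrt2}\;=\;\frac{14+3\sqrt2}{4}\;=\;4.5606\ldots\;>\;4.55,
\]
and the case $a_{k+1}=1$ only gives a larger right tail ($[0;1,\overline{1,4}]=\tfrac{1}{2\sqrt2-1}\approx0.547$). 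This one inequality finishes the proof; once you fix the direction of the monotonicity, your ``main obstacle'' paragraph is unnecessary.
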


\begin{lemma}\label{l:Frei-gap4+5} If $m(\underline{a})<4.52786$, then $\underline{a}\in\{1,2,3,4\}^{\mathbb{Z}}$ can not contain the subwords $313133$, $443131344$ or their transposes.
\end{lemma}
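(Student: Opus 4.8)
The plan is to show that each of the three candidate strings (and hence their transposes) cannot be completed to a bi-infinite sequence $\underline{a}\in\{1,2,3,4\}^{\mathbb{Z}}$ with $m(\underline{a})<4.52786$; in each case we will either directly produce a position $n$ at which $\lambda_n(\underline{a})\geq 4.52786$, or reduce to such a case after a bounded amount of branching. The basic mechanism is the standard continued-fraction estimate: if $\underline{a}$ contains a string $\dots b_{-k}\dots b_{-1} c^* b_1 \dots b_\ell \dots$ then $\lambda_0$ evaluated at that position is at least $[c; b_{-1}, \dots, b_{-k}] + [0; b_1, \dots, b_\ell]$ minus a controllable tail error, and this is already $>4.52786$ once the displayed finite block forces it. Throughout we are free to use Lemma~\ref{l:Frei-gap2+3}, so no $41$, $42$, $14$, $24$ ever appears; in particular every $4$ sits inside a block of $3$'s and $4$'s, which is exactly what makes the relevant continued fractions large.

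First I would handle $313133$. Consider the position of the central $3$ with vicinity determined by reading outward: the string $313133$ contains a sub-pattern of the form $\dots 3 1 3 1 3 3 \dots$; I would center at the last $3$ of the block that is preceded by $3133$ and followed by a $3$ (or analyse the two ways the trailing $33$ can be read together with whatever comes next). Using Lemma~\ref{l:Frei-gap2+3} the symbol before $313133$ is a $3$ or $4$ and the symbol after the final $3$ is in $\{1,2,3,4\}$; a short case split (on that following symbol, and on the one preceding $313133$) together with the elementary monotonicity rules for continued fractions pins $\lambda_n$ from below. The point is that $[3;\dots]$ with a nearby $33$ or $44$ pushes one summand above $3.2\ldots$ while $[0;3,1,3,1,\dots]$ contributes more than $0.30\ldots$, and the two together exceed $4.52786$; any branch that threatens to stay below is killed by extending one more symbol, where Lemma~\ref{l:Frei-gap2+3} again restricts the options. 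I would organise this exactly as in the ``local uniqueness'' arguments of Section~\ref{s:M-L}: list the $\leq 6$ vicinities, then for each list the $\leq 3$ one-sided extensions, discarding those that already give $\lambda_n\geq 4.52786$ and those forbidden by the no-$41$ rule, and check that nothing survives.

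Then I would do $443131344$ the same way. Here the outer $44$ on each side is a strong forcing feature: centering at a $4$ adjacent to another $4$ gives a summand $[4;4,\dots]>4.23\ldots$, and the inner $3131$ gives the other summand well above $0.29\ldots$, so again the sum beats the threshold; the internal string $31313$ is handled by (or is the reason we separately needed) the first case. Since $443131344$ is a palindrome, transposition is automatic there, and for $313133$ one simply repeats the argument reading right-to-left.

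The main obstacle I anticipate is purely bookkeeping: the threshold $4.52786$ is extremely close to the true edge $\mu=4.5278295\ldots$ of Freiman's gap, so the finite blocks one must write down before the inequality becomes strict are long, and one has to be careful that the ``tail error'' from the unspecified infinite continuations is genuinely dominated — i.e. that the finite-block estimate already clears $4.52786$ with room to spare, uniformly over all legal extensions. Concretely this means choosing, in each branch, a long enough explicit window (and invoking Lemma~\ref{l:Frei-gap2+3} to forbid $4$'s from being ``diluted'' by $1$'s or $2$'s next to them) so that the worst-case completion still evaluates above $4.52786$. This is exactly the kind of finite, if tedious, verification carried out repeatedly in Section~\ref{s:M-L}, and it is most cleanly presented as a tree of vicinities-and-extensions with the numeric lower bound recorded at each pruned leaf.
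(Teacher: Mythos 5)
The paper itself does not prove this lemma; it is imported verbatim from \cite[Appendix D]{L+20}, and the argument there is indeed the vicinity-and-extension pruning you describe. So your overall strategy is the right one. However, your sketch has a genuine gap: every concrete numerical anchor you give lands on the \emph{wrong side} of the threshold $4.52786$, and for $443131344$ you propose centring at the wrong letter. Centring at a $4$ adjacent to another $4$ gives $\lambda_n=[4;4,\dots]+[0;3,1,3,1,3,4,4,\dots]\approx 4.21+0.26\approx 4.47$, far below $4.52786$ (and your quoted ``$4.23\ldots+0.29\ldots$'' is only $4.52$, still short). The position that actually works is the \emph{central} $3$ of the palindrome, where $\lambda_n=[3;1,3,4,4,\dots]+[0;1,3,4,4,\dots]$; but even there the uniform bound over all tails is about $4.527854$, i.e.\ it misses the threshold by roughly $6\times10^{-6}$, so one must branch further on the continuations beyond each $44$ (feeding in the exclusion of $41,42,14,24$ from Lemma~\ref{l:Frei-gap2+3} and going one or two more levels deep) before the inequality becomes strict. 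Likewise for $313133$: your ``$3.2\ldots$ plus $0.30\ldots$'' sums to about $3.5$; the correct centre is the third letter, giving $\lambda_n=[3;1,3,\dots]+[0;1,3,3,\dots]$, and with unconstrained tails this is only bounded below by about $4.5276$, again under the threshold, so the forbidden words of Lemma~\ref{l:Frei-gap2+3} must be used to sharpen both tail estimates before one clears $4.52786$ (the final margin is of order $10^{-3}$). Also, $443131344$ contains $313134$, not $313133$, so the first case does not ``handle'' the interior of the second.

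In short, the part you defer as ``purely bookkeeping'' is where the entire content of the lemma lies: the threshold sits only $\sim 3\times10^{-5}$ above $\mu$, actual Markov values exist arbitrarily close below $4.52786$, and the first-order interval bounds provably do \emph{not} decide the claim. A complete proof must exhibit, for each of the two words, the specific centring position, the tree of admissible two-sided extensions, and verified rational lower bounds at each pruned leaf --- exactly what is carried out in \cite[Appendix D]{L+20}.
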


\begin{corollary}\label{c:Frei-gap1} Suppose that $4.5278<m(\underline{a})=\lambda_0(\underline{a})<4.52786$. Then, $\underline{a}\in\{1,2,3,4\}^{\mathbb{Z}}$ has the form $\dots a_{-1}a_0a_1\dots= \dots 343\dots$ or $\dots344\dots$ (up to transposition). 
\end{corollary}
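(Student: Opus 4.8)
The plan is to combine the two preceding lemmas to cut down the possible central vicinities of $\underline{a}$. Since we are assuming $4 < m(\underline{a}) = \lambda_0(\underline{a}) < 5$, the standing reductions recorded just before the Preliminaries give $\underline{a} \in \{1,2,3,4\}^{\mathbb{Z}}$ and $a_0 \in \{3,4\}$. So, up to transposition, the vicinity of $a_0^*$ is one of $a_{-1}a_0^*a_1$ with $a_0 \in \{3,4\}$ and $a_{-1}, a_1 \in \{1,2,3,4\}$. First I would invoke Lemma \ref{l:Frei-gap2+3}: since $m(\underline{a}) < 4.52786 < 4.55$, the word $\underline{a}$ contains no $41$, no $42$, and no transposes $14$, $24$. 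This immediately kills every vicinity in which a $4$ is adjacent to a $1$ or a $2$; in particular, if $a_0 = 4$ then $a_{\pm 1} \in \{3,4\}$, and if $a_0 = 3$ then a neighbour equal to $4$ is still permitted but a neighbour equal to $1$ or $2$ is not yet excluded.

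Next I would argue that the case $a_0 = 4$ reduces to the case $a_0 = 3$ by a shift. Indeed, if $a_0 = 4$ then by the previous paragraph $a_{-1}, a_1 \in \{3,4\}$; I claim that some digit equal to $3$ must appear within bounded distance. If not, $\underline{a}$ contains a long block of $4$'s, forcing $\lambda_0(\sigma^j\underline{a}) \geq [4;\overline{4}] + [0;\overline{4}] = 2 + 2\sqrt{2} + \ldots$ — more precisely, a block $44^*4$ already yields $\lambda_0(4 4^* 4) > 4.52786$, contradicting $m(\underline{a}) < 4.52786$. (This sub-estimate is a routine continued-fraction bound and is the kind of computation I would not grind through here.) Hence, after replacing $\underline{a}$ by a shift $\sigma^j\underline{a}$ — which changes neither the Markov value $m(\underline{a})$ nor the hypothesis — we may assume $a_0 = 3$. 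So from now on the vicinity is $a_{-1}3^*a_1$ with $a_{-1}, a_1 \in \{1,2,3,4\}$, and by Lemma \ref{l:Frei-gap2+3} there is no constraint from $41/42$ on these particular neighbours yet, so all of $13^*1, 13^*2, 13^*3, 13^*4, 23^*2, 23^*3, 23^*4, 33^*3, 33^*4, 43^*4$ (up to transposition) remain a priori.

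Now I would rule out a neighbour equal to $1$ or $2$. The assertion to prove is that $a_0 = 3$ forces $a_{-1}, a_1 \in \{3,4\}$. Suppose $a_1 = 1$ (the case $a_1 = 2$ and the transposed cases are handled identically after reflecting/shifting). Then $\underline{a}$ contains $3^*1$, and since $41, 42$ are forbidden the digit after this $1$ lies in $\{1,3\}$ — wait, it could also be $2$; in any case one obtains a word of the shape $\ldots 3 1 \ast \ldots$ whose left and right continued-fraction tails can be bounded below. Concretely I would show $\lambda_0(3^*1) = [3; \ldots] + [0; 1, \ldots] \geq 3 + [0;1,4,\ldots]$-type lower bound that exceeds $4.52786$, so that $3^*1$ (hence also $13^*$, $3^*2$, $23^*$ by transposition) is incompatible with $m(\underline{a}) < 4.52786$. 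I expect this to be the **main obstacle**: the numerics must be done carefully because the relevant Markov values hover very close to $\mu \approx 4.5278$, so the bound $\lambda_0(3^*1) > 4.52786$ needs the right admissible completions — one should push the estimate out a few continued-fraction terms and use that $41, 42$ are forbidden to control the worst case. Granting this, $a_0 = 3 \Rightarrow a_{\pm 1} \in \{3,4\}$. Finally, the vicinity $3^*4$ can coexist with $a_0 = 3$ (it does not contain $41$ or $42$), so the surviving central portions are exactly $\dots 343 \dots$ and $\dots 344 \dots$ up to transposition, as claimed. At no point did I need Lemma \ref{l:Frei-gap4+5} for this corollary — that lemma will be used downstream — but if a vicinity $43^*4$ (i.e.\ $\dots 434 \dots$) needed further pruning one would then appeal to the forbidden words $313133$ and $443131344$; here it suffices to note $43^*4$ is a transpose-equivalent instance already covered by the $\dots 344 \dots$ branch read from the other side, completing the reduction. $\qquad\blacksquare$
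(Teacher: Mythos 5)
Your proposal has the starred position in the wrong place, and the numerical estimates that would be needed to fix it are false. The corollary asserts that the index $0$ at which the Markov value is attained satisfies $a_0=4$ with $a_{-1}a_0a_1=34^*3$ or $34^*4$ (this is how the subsequent subsections read it: ``extensions of $34^*3$'' and ``$34^*4$''). Your argument instead tries to reduce to $a_0=3$ by shifting; but the hypothesis $m(\underline{a})=\lambda_0(\underline{a})$ singles out the position where the supremum is attained, and replacing $\underline{a}$ by $\sigma^j\underline{a}$ generally destroys it ($\lambda_0(\sigma^j\underline{a})=\lambda_j(\underline{a})$ may be strictly smaller than $m(\underline{a})$). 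You end up classifying vicinities of a $3$, which is precisely the case the corollary excludes. Moreover, both of your key inequalities point the wrong way. First, $\lambda_0(44^*4)=4+[0;4,\dots]+[0;4,\dots]\leq 4+2\,[0;4,\overline{4,1}]<4.48$, so a central $444$ is impossible because the value is too \emph{small}, not too large. Second, $\lambda_0(13^*1)>4.52786$ is false: the periodic sequence $\overline{313121}$ attains its Markov value at a $3$ flanked by two $1$'s, and that value is at most $\nu=4.52782\ldots<4.52786$ (it occurs as a tail of the sequence realizing $\nu$). So no shallow continued-fraction bound on $3^*1$ can exclude $a_0=3$; indeed $3+2\,[0;\overline{1,3}]=\sqrt{21}=4.5825\ldots$ exceeds the upper threshold, so the exclusion of $a_0=3$ requires showing that the only way to keep $\lambda_0>4.5278$ is to force longer and longer blocks of $\overline{31}$ around the origin, whose limit $\sqrt{21}$ then violates the upper bound, while every deviating branch drops below $4.5278$. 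That pruning is exactly what the forbidden words $313133$ and $443131344$ of Lemma~\ref{l:Frei-gap4+5} accomplish, so your remark that this lemma is not needed is also incorrect.

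For comparison, the paper does not attempt a self-contained argument here: it presents the corollary as the outcome of Freiman's case analysis, packaged in Lemmas~\ref{l:Frei-gap2+3} and~\ref{l:Frei-gap4+5} whose proofs are deferred to \cite[Appendix D]{L+20}. A correct short derivation along the paper's lines would be: $4<\lambda_0<5$ forces $a_0\in\{3,4\}$; the case $a_0=3$ is eliminated by the deep analysis just described; for $a_0=4$, Lemma~\ref{l:Frei-gap2+3} forbids $41$ and $42$, so $a_{\pm1}\in\{3,4\}$, and $a_{-1}=a_1=4$ is impossible since it gives $\lambda_0<4.48$; hence the vicinity is $34^*3$ or $34^*4$ up to transposition.
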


\subsection{Extensions of the word \boldmath{$343$}}

The following results analyse possible extensions of $...34^{*}3...$.

\begin{lemma}\label{l:Frei-gap6-9} If $m(\underline{a})<4.52786$, then $\underline{a}\in\{1,2,3,4\}^{\mathbb{Z}}$ can not contain the subwords $3432$, $134312$, $31343132$, $21313431312$ or their transposes.
\end{lemma}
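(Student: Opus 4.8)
The plan is to argue exactly as in the local-uniqueness analysis of Section~\ref{s:M-L}, but for the subshift $\{1,2,3,4\}^{\mathbb{Z}}$ and around the Markov value $\approx 4.52786$. For each of the four forbidden words $3432$, $134312$, $31343132$, $21313431312$ (and their transposes) one must show that every bi-infinite extension $\underline{a}$ containing it has $m(\underline{a}) = \lambda_0(\underline{a}) \geq 4.52786$. Since $\lambda_j(\underline{a})$ depends on a one-sided tail in each direction and is monotone in the tails in the usual alternating sense, this reduces to evaluating continued fractions of the form $[a_k; a_{k-1},\dots] + [0; a_{k+1},\dots]$ at the position $k$ sitting over the relevant $4$ (or wherever $\lambda$ is maximised), and bounding it from below by choosing, digit by digit, the worst (smallest-value) admissible continuation; by Corollary~\ref{c:Frei-gap1} and Lemmas~\ref{l:Frei-gap2+3}–\ref{l:Frei-gap4+5} the admissible digits around each $4$ are already heavily constrained (no $41$, $42$, no $313133$, no $443131344$, central block $343$ or $344$).

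Concretely, I would handle $3432$ first: by Corollary~\ref{c:Frei-gap1} the central block is $34^*3$ or $34^*4$; an occurrence of $3432$ places a $2$ two steps to the right of a $4$, i.e.\ the string $4^*3\,2$ or, after a shift, $3\,4\,3\,2$ with the $4$ at some position $k$. One then computes $\lambda_k \geq [4; 3, \dots] + [0; 3, 2, \dots]$ and, filling in the forced digits to the left (using $343$-type vicinities and the forbidden $313133$, $443131344$) and the smallest admissible digits to the right after the $2$, checks the bound $>4.52786$ — this is a short finite continued-fraction estimate. The words $134312$, $31343132$, $21313431312$ are treated the same way but with longer forced prefixes: each successive word fixes more of the neighbourhood of the relevant $4$, and one again maximises over positions and minimises over admissible tails. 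In each case the key point is that the already-forbidden strings from the preceding lemmas, together with $313133$ and $443131344$, force enough digits that the resulting lower bound for $\lambda$ at the extremal position clears $4.52786$.

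The main obstacle is purely bookkeeping: one has to identify, for each of the four words, which position along the word yields the largest $\lambda_k$ (it is not always the obvious central $4$), and then verify that every admissible one-sided extension from that position — respecting all of $41$, $42$, $313133$, $443131344$ being forbidden and the $343/344$ central constraint — keeps the value above the threshold. Because the threshold $4.52786$ is close to the true gap endpoint $\mu = 4.52782956616\ldots$, the continued-fraction estimates must be carried to several digits of precision, so one should compute each $[\,\cdot\,;\,\cdot\,,\dots]$ using enough convergents (a dozen or so digits suffices, since forcing a tail to be $\overline{21}$ or $\overline{12}$ gives a clean closed form against which to compare). I do not expect any conceptual difficulty beyond this: the argument is a routine, if lengthy, case analysis of exactly the type already carried out repeatedly in Section~\ref{s:M-L}, specialised to the alphabet $\{1,2,3,4\}$ and the neighbourhood of $\mu$.
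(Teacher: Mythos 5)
The paper does not actually prove this lemma: it is one of the statements extracted from Freiman's argument, with proofs deferred to \cite[Appendix D]{L+20}, so there is no in-paper proof to compare against. Your strategy is the standard one used there and throughout Section~\ref{s:M-L}: for each occurrence of one of the four words, bound $\lambda_j$ from below at the position of the $4$ using the alternating monotonicity of continued fractions, restricting the admissible tails by the alphabet $\{1,2,3,4\}$ and by the previously forbidden strings, and check that the bound exceeds $4.52786$. That is the right plan. One small correction: your appeal to Corollary~\ref{c:Frei-gap1} is neither available nor needed --- that corollary assumes $m(\underline{a})>4.5278$, which is not part of the hypothesis here, and in any case each of the four words already contains the block $343$ around its $4$.

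The substantive issue is that the proposal defers every one of the four verifications, and these computations are the entire content of the lemma; they are not uniformly ``short finite continued-fraction estimates''. For $3432$ the crude bound $\lambda_j>4+\tfrac14+\tfrac27>4.535$ needs no constraints at all, but already for $134312$ the bound over unconstrained tails in $(0,1)$ gives only about $4.5167$, and even restricting tails to $\{1,2,3,4\}$-continued fractions yields roughly $4.5287$, a margin of order $10^{-3}$. For the longest word $21313431312$ one has $\lambda_0=[4;3,1,3,1,2,\dots]+[0;3,1,3,1,2,\dots]$, whose natural comparison point is $4+2[0;\overline{3,1,3,1,2,1}]=4.5278631\ldots$, leaving a margin of only about $3\times 10^{-6}$ over the threshold $4.52786$; there the earlier exclusions ($41$, $42$, $313133$, $443131344$ and transposes) are genuinely needed to pin down the extremal admissible tails, and the estimates must be carried to seven or more significant figures. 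So the outline matches the cited proof, but the decisive numerical steps --- which are where all the difficulty of the lemma lives --- remain to be carried out.
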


\begin{corollary}\label{c:Frei-gap3} If $4.5278295<m(\underline{a})=\lambda_0(\underline{a})<4.5278296$ and $a_{-1}a_0a_1=343$, then $a_{-9}\dots a_0\dots a_{7}=33112131343131344$ (up to transposition). 
\end{corollary}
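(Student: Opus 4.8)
The goal is to prove Corollary~\ref{c:Frei-gap3}: assuming $4.5278295 < m(\underline{a}) = \lambda_0(\underline{a}) < 4.5278296$ and $a_{-1}a_0a_1 = 343$, we must show $a_{-9}\dots a_0 \dots a_7 = 33112131343131344$ (up to transposition). The plan is to run a constrained tree search of extensions of the central block $34^*3$, pruning at each stage using the forbidden-word lemmas (Lemmas~\ref{l:Frei-gap2+3}, \ref{l:Frei-gap4+5}, \ref{l:Frei-gap6-9}) and direct estimates on continued fractions whenever an extension forces $\lambda_0$ out of the narrow window $(4.5278295, 4.5278296)$.

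First I would organize the bookkeeping. Since $m(\underline{a}) < 4.55$, Lemma~\ref{l:Frei-gap2+3} already restricts all letters to $\{1,2,3,4\}$ with no $41$ or $42$, so $4$'s are isolated among small digits only in the combinations $43$, $44$, $34$ (and transposes). Starting from $a_{-1}a_0a_1 = 3^*43$ — wait, more precisely from $a_{-1}a_0a_1 = 343$ with $a_0 = 4$ being disallowed since $a_0 \in \{3,4\}$ and the vicinity is $34^*3$ meaning $a_{-1}=3, a_0=4, a_1 = 3$; I will treat the word as $\dots a_{-1}a_0 a_1 \dots = \dots 3\,4^*\,3 \dots$. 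At each step I extend the currently known central block by one digit on whichever side the analysis dictates, producing at most a few children ($1,2,3,4$, minus those immediately killed by the no-$41$/no-$42$ rule or by the forbidden subwords of Lemma~\ref{l:Frei-gap4+5} and Lemma~\ref{l:Frei-gap6-9}: $3432$, $134312$, $31343132$, $21313431312$, $313133$, $443131344$, and transposes). For each surviving child I compute a rigorous two-sided bound on $\lambda_0$ of any bi-infinite completion, using the standard fact that for a finite central word $w$, $\lambda_0(w)$ ranges over an interval whose endpoints are obtained by appending the extremal tails $\overline{1}$-type and $\overline{4}$-type continuations (more carefully, the worst-case tails within $\{1,2,3,4\}$ respecting the already-established forbidden words). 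If the entire achievable interval lies below $4.5278295$ or above $4.5278296$, that branch dies; otherwise it survives and I recurse. The claim is that this process is forced down a single surviving path until the block $a_{-9}\dots a_7 = 33112131343131344$ is pinned down.

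The key intermediate milestones along the forced path, which I would verify as a chain of one-line continued-fraction estimates, are roughly: from $34^*3$, Lemma~\ref{l:Frei-gap6-9} ($3432$ forbidden) forces $a_2 \in \{1,3,4\}$ — and $343$ cannot be followed by $4$ or $3$ without either creating $3433$/$3434$ patterns that push $\lambda$ up, or (for the relevant transposed picture) one checks the window; this should force $a_1 a_2 = 31$, then working outward $a_{-2} = 1$ (by the transpose of $3432$, since $a_{-2}a_{-1}a_0 a_1 = ?\,3\,4\,3$ and $2343$ is the transpose of $3432$), giving $134^*31$; then $134312$ forbidden forces $a_3 \ne 2$, and a window estimate kills $a_3 = 4$ (would be too large), so $a_3 \in \{1,3\}$; continuing, $31343132$ forbidden and its transpose, plus $313133$ forbidden, repeatedly cut the tree until we are driven to $13431313$ on the right side and $31213$, $1131213$ on the left, assembling $\dots 3\,3\,1\,1\,2\,1\,3\,1\,3\,4^*\,3\,1\,3\,1\,3\,4\,4 \dots$ which (re-indexing so $a_0 = 4$ sits at the asterisk — note the statement writes $a_{-9}\dots a_0 \dots a_7$ with the $4$ at position $0$) is exactly $33112131343131344$. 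At the final step, $443131344$ being forbidden is what forces the rightmost digit to be $4$ rather than continuing, and the transpose forbidden words do the symmetric job on the left; the two-sided window $(4.5278295, 4.5278296)$ is narrow enough that every alternative branch is excluded by a numerical inequality on a continued fraction of bounded length.

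The main obstacle is purely computational stamina and rigor: one must be careful that the continued-fraction interval bounds are genuinely two-sided and account for the already-imposed forbidden words (otherwise a branch one wants to kill might secretly survive with an exotic tail), and one must make sure the case tree is exhaustive — every branch at every node is either continued or explicitly killed with a cited lemma or a displayed inequality. I expect no conceptual difficulty beyond this; the whole argument is an instance of the ``local uniqueness'' bookkeeping already carried out at length in Section~\ref{s:M-L}, and in fact this particular corollary is simpler because the target central block has modest length ($17$ digits) and the window is extremely tight. A clean way to present it is as a short sequence of displayed implications, each of the form ``by Lemma~X (forbidden word $Y$) [or: since $\lambda_0(w')$ would exceed $4.5278296$], the block extends to $\dots$'', terminating at $33112131343131344$.
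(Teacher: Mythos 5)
Your overall method is exactly the one the paper intends: Corollary~\ref{c:Frei-gap3} is stated there without proof, as the outcome of precisely the kind of pruned tree search you describe (forbidden subwords from Lemmas~\ref{l:Frei-gap2+3}, \ref{l:Frei-gap4+5}, \ref{l:Frei-gap6-9}, plus two-sided continued-fraction estimates against the window), with the underlying computations deferred to Freiman and to \cite[Appendix D]{L+20}. So there is no divergence of approach, and your identification of the target block and of the role of transposition is correct.

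That said, as written your text is a plan rather than a proof, and the plan is imprecise at several nodes in ways worth fixing. First, none of the decisive inequalities are actually computed; for a statement whose entire content is a finite verification, those displayed inequalities \emph{are} the proof. Second, some eliminations are attributed to the wrong tool: the transpose of $3432$ (i.e.\ $2343$) only rules out $a_{-2}=2$, so forcing $a_{-2}=1$ also requires killing $a_{-2}=3$ (via the forbidden word $334$ of Lemma~\ref{l:Frei-gap14+15} --- which is stated \emph{after} this corollary, so you should either invoke it explicitly, noting its hypothesis $m<4.528$ is satisfied, or replace it by a direct bound) and killing $a_{-2}=4$ (a genuine estimate, e.g.\ $4343$ forces $\lambda_0>4+[0;3,4,\dots]+[0;3,1,\dots]>4.55$). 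Third, several surviving branches are not excluded by any listed forbidden word and need honest two-sided bounds taken one or two digits deeper than the node itself: for instance $a_3=1$ (the block $134^*311$) survives a one-step estimate and is only killed by bounding $[0;3,1,\dots]>0.2609$ and $[0;3,1,1,\dots]>0.2742$, whence $\lambda_0>4.535$; similarly the choices $a_6\in\{1,2\}$ after $34^*31313$ are not covered by $313133$ alone. Finally, be careful that your pruning criterion uses both directions correctly: a branch dies either because every completion gives $\lambda_0>4.5278296$ (contradicting $m<4.5278296$) or because every completion gives $\lambda_0<4.5278295$ (contradicting $m=\lambda_0>4.5278295$); your sketch states this but then only ever invokes the upper cutoff. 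None of these are conceptual obstacles --- the argument does terminate at $33112131343131344$ --- but until the chain of displayed inequalities is written out, the corollary has not been proved.
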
 

\begin{lemma}\label{l:Frei-gap14+15} If $m(\underline{a})<4.528$, then $\underline{a}\in\{1,2,3,4\}^{\mathbb{Z}}$ can not contain the subwords $334$, $223444$ or their transposes. 
\end{lemma}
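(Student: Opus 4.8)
The plan is to feed the problem into the pair restrictions already available and then run two short continued-fraction estimates. Since $4.528<4.55$, Lemma~\ref{l:Frei-gap2+3} applies, so $\underline a$ cannot contain $41$, $42$ or their transposes $14$, $24$; in particular every letter adjacent to a $4$ lies in $\{3,4\}$. Because $m(\cdot)=\sup_n\lambda_n(\cdot)$ is invariant under transposition, it suffices to rule out the words $334$ and $223444$ themselves.

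Suppose first that $\underline a$ contains $334$, say $a_{-2}a_{-1}a_0=334$, and look at the index $0$:
\[\lambda_0(\underline a)=[4;3,3,a_{-3},a_{-4},\ldots]+[0;a_1,a_2,\ldots].\]
I would bound each summand from below by its value on the worst admissible completion, using the alternating monotonicity of continued fractions together with the forbidden pairs above. Here the left summand is smallest when the tail $[a_{-3};a_{-4},\ldots]$ is largest, and since a $4$ is never immediately preceded by a $1$ or a $2$ this forces $a_{-3}a_{-4}a_{-5}\ldots=\overline{4,3}$; the right summand is likewise smallest at $[0;\overline{4,3}]$. With $[4;\overline{3,4}]=2+\frac{4\sqrt3}{3}=4.30940\ldots$ one gets
\[\lambda_0(\underline a)\ \geq\ [4;3,3,\overline{4,3}]+[0;\overline{4,3}]\ >\ 4.534\ >\ 4.528,\]
whence $m(\underline a)>4.528$, a contradiction.

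Now suppose $\underline a$ contains $223444$ and centre at the \emph{first} of its three $4$'s, say $a_{-3}a_{-2}a_{-1}a_0a_1a_2=223444$ (taking any of the other five positions of $223444$ as the centre yields a worst-case $\lambda$-value below $4.5$, which does not suffice). Then
\[\lambda_0(\underline a)=[4;3,2,2,a_{-4},a_{-5},\ldots]+[0;4,4,a_3,a_4,\ldots].\]
Now the left summand is smallest when the tail $[a_{-4};a_{-5},\ldots]$ is smallest: a $1$ may always sit at an even position but cannot follow a $4$, which forces $a_{-4}a_{-5}a_{-6}\ldots=\overline{1,3}$, so the left summand is at least $[4;3,2,2,\overline{1,3}]$; the right summand is smallest when $[a_3;a_4,\ldots]$ is largest, forcing $a_3a_4a_5\ldots=4,3,4,3,\ldots$, so it is at least $[0;4,4,\overline{4,3}]$. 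Using $[4;\overline{3,4}]=2+\frac{4\sqrt3}{3}$ and $[1;\overline{3,1}]=\frac{3+\sqrt{21}}{6}$ one obtains
\[\lambda_0(\underline a)\ \geq\ [4;3,2,2,\overline{1,3}]+[0;4,4,\overline{4,3}]\ >\ 4.5281\ >\ 4.528,\]
again contradicting $m(\underline a)<4.528$.

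The main obstacle is this last inequality: the lower bound lies only about $10^{-4}$ above the threshold $4.528$. So the delicate points are (i) checking that the completions above are genuinely the worst admissible ones — here one must weigh ``make the even-position digit as small as possible'' against the constraint ``a $4$ is never adjacent to a $1$ or a $2$'', since putting a $1$ at an even position also forbids a $4$ at the neighbouring odd position — and (ii) carrying the arithmetic to enough decimal places. The case $334$ has a comfortable margin, and the argument is otherwise mechanical, running in parallel to the proofs of Lemmas~\ref{l:Frei-gap2+3}--\ref{l:Frei-gap6-9}.
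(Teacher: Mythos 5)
Your argument is correct: the paper itself gives no proof of this lemma (it is quoted from Freiman's work via \cite[Appendix D]{L+20}), and your direct verification is exactly the standard argument used there — reduce to the pair constraints of Lemma~\ref{l:Frei-gap2+3}, then bound $\lambda_0$ at a well-chosen centre by the extremal admissible tails. The numerics check out: $[4;3,3,\overline{4,3}]+[0;\overline{4,3}]=4.53422\ldots$ and $[4;3,2,2,\overline{1,3}]+[0;4,4,\overline{4,3}]=\frac{1841+\sqrt{21}}{430}+\frac{31+2\sqrt{3}}{146}=4.528108\ldots>4.528$, so the tight second case does clear the threshold.
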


We include the proof of the following corollary as we will make use of the details in the next section.

\begin{corollary}\label{c:Frei-gap4} If $4.5278295<m(\underline{a})=\lambda_0(\underline{a})<4.5278296$ and $a_{-1}a_0a_1=343$, then $m(\underline{a})\leq \nu$. 
\end{corollary}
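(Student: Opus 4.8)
The plan is to bootstrap from Corollary~\ref{c:Frei-gap3}, which already pins down that a sequence $\underline{a}$ with $4.5278295<m(\underline{a})<4.5278296$ and $a_{-1}a_0a_1=343$ must have central block $a_{-9}\dots a_7 = 33112131343131344$ (up to transposition). So I would start from this block and keep extending it to both sides, at each step discarding the extensions that force the Markov value above $\mu$ (or rather above our working upper bound $4.5278296$) using Lemmas~\ref{l:Frei-gap2+3}, \ref{l:Frei-gap4+5}, \ref{l:Frei-gap6-9}, and \ref{l:Frei-gap14+15} together with the elementary inequalities $\lambda_j(\underline a)\le m(\underline a)$. The forbidden subwords ($41$, $42$, $313133$, $443131344$, $3432$, $134312$, $31343132$, $21313431312$, $334$, $223444$, and their transposes) should successively eliminate all but one choice of digit at each new position, exactly as in the chain of lemmas in Section~\ref{s:M-L}. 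In particular, the block $\dots 343131344\dots$ on the right cannot be followed by the transpose-forbidden completion of $443131344$, forcing the continuation, and on the left the appearance of $33112\dots$ together with the ban on $334$ and $223444$ should force a unique one-sided periodic tail.

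The key structural point to extract — and this is where I would spend the most care — is that the forced extension on \emph{at least one} side becomes eventually periodic with the period realised in Freiman's word $\nu$, namely the right side terminating in $\overline{444323}$ and the left side terminating in $\overline{313121}$. Concretely, I expect to show by the same recursive elimination that
\[
\underline a = \dots\,\overline{313121}\,3\,1\,3\,1\,3\,4\,3\,1\,3\,1\,2\,1\,1\,3\,3^{*}\dots
\]
pattern on the left stabilises, and that on the right one is forced into $3\,1\,3\,1\,3\,4\,4\,4\,\overline{323444}$ (reading outward), i.e. precisely the two-sided expansion defining $\nu$. Since every admissible extension either equals this periodic sequence or strictly reduces some $\lambda_j$ below $\lambda_0$, and since adding the periodic tails can only decrease (or leave unchanged) the relevant continued-fraction value by the standard monotonicity of $[a_0;a_1,\dots]$ and $[0;a_1,a_2,\dots]$ under lexicographic comparison of tails, we conclude $m(\underline a)=\lambda_0(\underline a)\le\lambda_0$ of Freiman's sequence $=\nu$.

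The main obstacle I anticipate is bookkeeping rather than conceptual: one must carry out the branching analysis far enough on both sides to see the periodicity emerge, and at each branch verify the numerical inequality $\lambda_0(\text{block})>4.5278296$ or $<4.5278295$ that kills the unwanted branch — these are finite continued-fraction estimates but there are many of them. A secondary subtlety is handling the ``up to transposition'' ambiguity consistently, so that the left tail $\overline{313121}$ and the right tail $\overline{444323}$ are correctly matched to the asymmetric central block; I would fix the transposition using the non-palindromic structure of $33112131343131344$ at the outset. Once the forced sequence is identified as Freiman's $\nu$-realising sequence, the final comparison $m(\underline a)\le\nu$ is immediate, since $\underline a$ is obtained from that sequence by extensions that do not increase $\lambda_0$.
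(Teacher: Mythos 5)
Your plan coincides with the paper's proof: starting from the central block forced by Corollary~\ref{c:Frei-gap3}, the paper extends digit by digit to both sides using the forbidden subwords of Lemmas~\ref{l:Frei-gap2+3}, \ref{l:Frei-gap4+5}, \ref{l:Frei-gap6-9} and~\ref{l:Frei-gap14+15} together with continued-fraction monotonicity, showing that the \emph{maximal} admissible extension is exactly the bi-infinite sequence realising $\nu$ (right tail $\overline{444323}$, left tail $\overline{313121}$, read outward), whence $m(\underline{a})\le\nu$. One caution: the extension is genuinely not unique at every position (e.g.\ after $a_7=4$ both $a_8=3$ and $a_8=4$ survive the forbidden-word tests), so the operative mechanism is the maximisation argument of your final paragraph rather than the elimination of ``all but one choice of digit''; the corollary asserts only an upper bound, not local uniqueness.
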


\begin{proof} By Corollary~\ref{c:Frei-gap3}, we have that $a_{-9}\dots a_0\dots a_7 = 33112131343131344$ (up to transposition). We want to \emph{maximize} $4.5278295<m(\underline{a})=\lambda_0(\underline{a})<4.5278296$. By Lemma~\ref{l:Frei-gap2+3}, this means that $a_{-9}\dots a_0\dots a_9= 3311213134313134443$. By Lemma~\ref{l:Frei-gap14+15}, we have $a_{-9}\dots a_0\dots a_{11}= 331121313431313444323$. By Lemma~\ref{l:Frei-gap6-9}, we derive $a_{-9}\dots a_0\dots a_{11}= 33112131343131344432344$. By repeating this argument, we conclude that $a_{-9}\dots a_0\dots a_7\dots= 33112131343131344\overline{432344}$. Similarly, we have from Lemma~\ref{l:Frei-gap14+15} that $a_{-10}\dots a_0\dots a_7 = 333112131343131344$. By Lemma~\ref{l:Frei-gap2+3}, we get $a_{-13}\dots a_0\dots a_7 = 131333112131343131344$. By Lemma~\ref{l:Frei-gap4+5}, $a_{-15}\dots a_0\dots a_7 = 12131333112131343131344$. By repeating this argument, we get $\dots a_{-9}\dots a_0\dots a_7 = \overline{121313}33112131343131344$. 

In summary, our assumptions imply the maximal value of $m(\underline{a})$ is $\nu$. 
\end{proof}

\subsection{Extensions of the word \boldmath{$344$}}

The following corollary results from an analysis of possible extensions of $...34^{*}4...$.

\begin{corollary}\label{c:Frei-gap7} If $4.5278291<m(\underline{a})=\lambda_0(\underline{a})<4.527832$ and $a_{-1}a_0a_1=344$, then $m(\underline{a})\geq \mu$.
\end{corollary}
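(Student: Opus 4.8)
\textbf{Proof plan for Corollary~\ref{c:Frei-gap7}.}
The plan is to mimic the structure of the proof of Corollary~\ref{c:Frei-gap4}, but now working on the $344$ side and \emph{minimising} rather than maximising the Markov value. First I would establish the analogue of the local-uniqueness analysis: assuming $4.5278291<m(\underline{a})=\lambda_0(\underline{a})<4.527832$ and $a_{-1}a_0a_1=344$, I would run through the possible one- and two-sided extensions of $34^{*}4$, at each step discarding those extensions $b$ for which $\lambda_0(b)$ already exceeds $4.527832$ (these force $m(\underline{a})$ out of the target window) and those which force a subword that has been ruled out by Lemmas~\ref{l:Frei-gap2+3},~\ref{l:Frei-gap4+5},~\ref{l:Frei-gap6-9},~\ref{l:Frei-gap14+15} (together with their corollaries). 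This is the same branching bookkeeping that produced Corollary~\ref{c:Frei-gap3} on the $343$ side; here it should pin down a central block $a_{-p}\ldots a_0\ldots a_q$ of $\underline{a}$ of the form forced by $344$, namely the one compatible with the continued-fraction expansion of $\mu$ read off from Theorem~\ref{t:Freiman-last-gap}, i.e.\ something like $\ldots 2232\,34^{*}4\,32113\ldots$ (the precise word being dictated by the forbidden-subword lemmas).

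Once the forced central block is identified, the second step is the extension/optimisation argument: I want to show that any admissible bi-infinite completion has Markov value at least $\mu$. Here one uses the standard fact that, with the $0$-th position fixed at the forced block, $\lambda_0(\underline{a})$ is monotone in each digit $a_i$ in a sign depending on the parity of $i$ — increasing $a_i$ for $i$ of one parity increases $\lambda_0$, for the other parity decreases it. To get a \emph{lower} bound on $m(\underline{a})\ge\lambda_0(\underline{a})$ I would, symbol by symbol, push each digit in the direction that \emph{decreases} $\lambda_0$, at each stage checking that the forbidden-subword lemmas (Lemmas~\ref{l:Frei-gap2+3}--\ref{l:Frei-gap14+15}) prevent me from choosing the extreme digit and thereby force the next digit of the expansion of $\mu$. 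Iterating, the forbidden-subword constraints should funnel the left tail into $\overline{3,1,3,1,2,1}$ and the right tail into $\overline{3,1,3,1,2,1}$ as well, exactly reproducing
$$\mu=[4;4,3,2,2,\overline{3,1,3,1,2,1}]+[0;3,2,1,1,\overline{3,1,3,1,2,1}],$$
so that the minimum of $m(\underline{a})$ over admissible completions is $\mu$, whence $m(\underline{a})\ge\mu$.

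The main obstacle I anticipate is the first step: the branching analysis of extensions of $344$ is more delicate than the $343$ case because $4$ is the largest admissible digit and the gap lemmas available (Lemmas~\ref{l:Frei-gap2+3}, \ref{l:Frei-gap4+5}, \ref{l:Frei-gap6-9}, \ref{l:Frei-gap14+15}) are stated for specific subwords on the $343$ side; I would need the $344$-side analogues (the "extensions of $344$" lemmas that, in the cited reference \cite[Appendix D]{L+20}, accompany Lemma~\ref{l:Frei-gap6-9}) to close off all the wrong branches and isolate the $\mu$-block, and verifying that the resulting window $(4.5278291,4.527832)$ is wide enough to retain the $\mu$-branch yet narrow enough to kill the competitors requires careful numerical estimates of $\lambda_0$ on finite words. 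Once that combinatorial skeleton is in place, the monotonicity/optimisation step is routine and parallels Corollary~\ref{c:Frei-gap4} verbatim.
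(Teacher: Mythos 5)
Your plan is the right one and matches what the paper intends: the paper itself gives no proof of Corollary~\ref{c:Frei-gap7}, stating only that it ``results from an analysis of possible extensions of $...34^{*}4...$'' and deferring the details (including the $344$-side forbidden-word lemmas you correctly identify as missing from the text) to \cite[Appendix D]{L+20}, the intended argument being exactly the branching-plus-optimisation scheme you describe, i.e.\ the proof of Corollary~\ref{c:Frei-gap4} with ``maximise towards $\nu$'' replaced by ``minimise towards $\mu$''. The only slip is your guessed central block: reading it off $\mu=\lambda_0(\overline{121313}22344^{*}3211\overline{313121})$, the forced word around $34^{*}4$ is $\ldots 1322\,34^{*}4\,3211\ldots$ rather than $\ldots 2232\,34^{*}4\,32113\ldots$, but since you flagged that word as provisional this does not affect the plan.
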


\subsection{End of the proof of Theorem~\ref{t:Freiman-last-gap}} 

The desired result follows directly from Corollaries~\ref{c:Frei-gap1},~\ref{c:Frei-gap4} and~\ref{c:Frei-gap7}. 


\section{Gaps of the spectra nearby Freiman's gap}\label{s:accum-gaps}

In this section we prove Theorem~\ref{t:main2}. The proof of this theorem begins with the following lemmas. 

\begin{lemma}\label{l:accum-gaps1} If $4.5278295<m(\underline{a})=\lambda_0(\underline{a})<4.5278296$, then either $m(\underline{a})\geq \mu>\nu$ or $m(\underline{a})\leq\nu$ and, up to transposition,  
$$\underline{a} = \dots 3311213134^*3131344\dots$$
\end{lemma}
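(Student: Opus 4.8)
The statement of Lemma~\ref{l:accum-gaps1} is essentially a repackaging of the analysis already carried out for Freiman's gap: under the hypothesis $4.5278295<m(\underline{a})=\lambda_0(\underline{a})<4.5278296$, I would first invoke Corollary~\ref{c:Frei-gap1} (whose hypothesis $4.5278<m(\underline{a})<4.52786$ is implied by ours) to conclude that, up to transposition, the central block of $\underline{a}$ is either $343$ or $344$. These two cases are treated separately.

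\textbf{The $344$ case.} Suppose $a_{-1}a_0a_1=344$. The window $(4.5278295,4.5278296)$ sits inside the window $(4.5278291,4.527832)$ of Corollary~\ref{c:Frei-gap7}, so that corollary applies directly and yields $m(\underline{a})\geq\mu$. Since $\mu>\nu$ (this is Freiman's strict inequality, already recorded in the statement of Theorem~\ref{t:Freiman-last-gap}), we land in the first alternative of the lemma. This disposes of the $344$ case with essentially no work.

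\textbf{The $343$ case.} Suppose $a_{-1}a_0a_1=343$. Now Corollary~\ref{c:Frei-gap3} applies (again because our window is contained in $(4.5278295,4.5278296)$, which is exactly its hypothesis), giving, up to transposition, $a_{-9}\dots a_0\dots a_7 = 33112131343131344$. Re-centering this block at the position of the starred $3$ (the $a_0$ of the claimed form, i.e.\ the middle symbol of $34^*3$), this is precisely the assertion $\underline{a}=\dots 3311213134^*3131344\dots$, so the containment part of the second alternative is immediate from Corollary~\ref{c:Frei-gap3}. It then remains to check that in this case $m(\underline{a})\le\nu$, and this is exactly the content of Corollary~\ref{c:Frei-gap4}, whose hypotheses ($4.5278295<m(\underline{a})<4.5278296$ and $a_{-1}a_0a_1=343$) we are assuming. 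Combining the two cases gives the dichotomy: either $m(\underline{a})\geq\mu>\nu$, or $m(\underline{a})\leq\nu$ with $\underline{a}$ of the stated form.

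\textbf{Main obstacle.} There is no real obstacle here: the lemma is a bookkeeping consequence of Corollaries~\ref{c:Frei-gap1},~\ref{c:Frei-gap3},~\ref{c:Frei-gap4} and~\ref{c:Frei-gap7}, all established in Section~\ref{s:Frei-gap}. The only points requiring a moment's care are (i) verifying that the interval $(4.5278295,4.5278296)$ is contained in the hypothesis interval of each corollary invoked — which it is, by inspection — and (ii) matching the indexing: Corollary~\ref{c:Frei-gap3} is stated with $a_{-1}a_0a_1=343$ and produces $a_{-9}\dots a_7 = 33112131343131344$, and one must note that the symbol playing the role of $a_0$ in the lemma's displayed word $\dots 3311213134^*3131344\dots$ is the \emph{first} $3$ of that $343$, i.e.\ a shift of the index by one relative to Corollary~\ref{c:Frei-gap3}; after this cosmetic relabelling the two statements coincide. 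Since $m(\underline{a})$ is shift-invariant, this relabelling does not affect the value $m(\underline{a})$, so the conclusion $m(\underline{a})\le\nu$ transfers verbatim.
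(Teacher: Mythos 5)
Your proof is correct and takes exactly the paper's route: the paper's own proof is the one-line remark that the lemma is a direct consequence of Corollaries~\ref{c:Frei-gap1},~\ref{c:Frei-gap3},~\ref{c:Frei-gap4} and~\ref{c:Frei-gap7}, and your case analysis simply fills in that bookkeeping. (One small note: no index shift is actually needed, since in both Corollary~\ref{c:Frei-gap3} and the lemma the position $a_0$ is the starred $4$ of $34^*3$; as you observe, this is immaterial because $m(\underline{a})$ is shift-invariant.)
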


\begin{proof} This is a direct consequence of Corollaries~\ref{c:Frei-gap1},~\ref{c:Frei-gap3},~\ref{c:Frei-gap4} and~\ref{c:Frei-gap7}. 
\end{proof}

\begin{lemma}\label{l:accum-gaps2} The family of sets 
\begin{eqnarray*}
W_{n,m} &=& \{m(\underline{a})=\lambda_0(\underline{a})\in (4.5278295, \mu): \underline{a}=\underline{\theta}^t 323444313134^*313121133313121\underline{\theta}' \\ 
& & \textrm{ with }\underline{\theta}=\underbrace{444323\dots 444323}_{n \textrm{ times}}\widehat{\underline{\theta}}, \, \underline{\theta}'= \underbrace{313121\dots313121}_{m \textrm{ times}}\widetilde{\underline{\theta}}, \textrm{ and } \widehat{\underline{\theta}}, \widetilde{\underline{\theta}}\in\{1,2,3,4\}^{\mathbb{N}}\}  
\end{eqnarray*} 
indexed by $n,m\in\mathbb{N}$ is a basis of neighborhoods of $\nu$ in $M$. 
\end{lemma}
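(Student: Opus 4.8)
The plan is to show that the sets $W_{n,m}$ (i) all contain $\nu$ and (ii) shrink to $\nu$ as $n,m\to\infty$, in the sense that every neighborhood of $\nu$ in $M$ contains some $W_{n,m}$. The containment $\nu\in W_{n,m}$ is immediate from the explicit continued fraction expansion of $\nu$ recorded in Theorem~\ref{t:Freiman-last-gap}: writing $\nu=\lambda_0(\underline{a}_\nu)$ with $\underline{a}_\nu=\overline{323444}313134^*313121133\overline{313121}$, one sees that for every $n$ the left tail $\overline{323444}$ begins with $n$ copies of the block $444323$ followed by something in $\{1,2,3,4\}^{\mathbb{N}}$ (namely the periodic continuation, which serves as $\widehat{\underline{\theta}}$ — note the cyclic shift $323444\to 444323$ matches), and similarly the right tail $133\overline{313121}$ — more precisely, after the obligatory central block $\dots313121133313121\dots$ the sequence continues with arbitrarily many further copies of $313121$; so $\underline{a}_\nu$ has the required form for all $n,m$, giving $\nu\in W_{n,m}$ and hence each $W_{n,m}\neq\varnothing$.

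For the ``basis of neighborhoods'' part, first I would observe that $W_{n,m}\subset (4.5278295,\mu)$ by fiat and that by Lemma~\ref{l:accum-gaps1} every $\underline{a}$ contributing to $W_{n,m}$ with Markov value less than $\mu$ must satisfy $m(\underline{a})\le\nu$ (the alternative $m(\underline{a})\ge\mu$ is excluded). Thus $W_{n,m}\subset M\cap(4.5278295,\nu]$. Next, the monotonicity $W_{n+1,m}\subset W_{n,m}$ and $W_{n,m+1}\subset W_{n,m}$ is clear from the definitions, so it suffices to show $\bigcap_{n,m}W_{n,m}=\{\nu\}$ together with the fact that for any $\varepsilon>0$ one has $W_{n,m}\subset(\nu-\varepsilon,\nu]$ for $n,m$ large. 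For this I would use the standard estimate that if two bi-infinite sequences agree on the window $[-N,N]$ around position $0$ then their $\lambda_0$-values differ by $O(\rho^{N})$ for an explicit $\rho<1$ coming from the contraction of the continued fraction algorithm (as used repeatedly, implicitly, in Section~\ref{s:M-L}): fixing $\underline{a}$ realising a value in $W_{n,m}$, the constraint forces $a_{-(6n+c)}\dots a_0^*\dots a_{6m+c'}$ to coincide with the corresponding window of $\underline{a}_\nu$ (here the blocks $444323$ and $313121$ each have length $6$, and $c,c'$ account for the fixed central block $323444313134^*313121133313121$), whence $|m(\underline{a})-\nu|=|\lambda_0(\underline{a})-\lambda_0(\underline{a}_\nu)|\le C\rho^{\min(6n,6m)}$, using also that by Lemma~\ref{l:accum-gaps1} the position $0$ is (up to transposition) the one achieving the supremum defining $m$, so that $\lambda_0$ at $0$ controls the Markov value. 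Letting $n,m\to\infty$ gives the claim.

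The main obstacle I expect is the bookkeeping in the previous paragraph: one must verify carefully that membership of a value $v\in W_{n,m}$ really does pin down a window of length growing linearly in $n$ and $m$ around the \emph{maximising} position — i.e., that there is no competing position $j\neq 0$ in $\underline{a}$ where $\lambda_j(\underline{a})$ could be larger and yet escape the forcing. This is where Lemma~\ref{l:accum-gaps1} (and behind it Corollaries~\ref{c:Frei-gap1}, \ref{c:Frei-gap3}, \ref{c:Frei-gap4}, \ref{c:Frei-gap7}) does the real work: any position with $\lambda_j$ close to $\nu$ must also sit inside a copy of $3311213134^*3131344$, and the one-sided periodicity extracted in the proof of Corollary~\ref{c:Frei-gap4} shows the left/right tails are then forced into the $\overline{444323}$ / $\overline{313121}$ patterns up to the point where $\widehat{\underline\theta},\widetilde{\underline\theta}$ are free — so the free part contributes only an exponentially small perturbation. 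A secondary, purely cosmetic subtlety is matching the cyclic phase of the periodic blocks ($323444$ vs.\ $444323$, $313121$ vs.\ $131213$, etc.) against the exact expansion of $\nu$; this is routine once the indices are written out. With these points in hand, $(W_{n,m})$ is a nested family of neighborhoods of $\nu$ in $M$ with diameters tending to $0$, hence a neighborhood basis, completing the proof.
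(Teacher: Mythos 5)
Your proposal is correct and follows essentially the same route as the paper, whose proof is simply the observation that the claim follows from Lemma~\ref{l:accum-gaps1} (local uniqueness of the central block $3311213134^*3131344$) combined with the step-by-step forcing of the periodic tails $\overline{432344}$ and $\overline{121313}$ carried out in the proof of Corollary~\ref{c:Frei-gap4}; you have merely made explicit the standard exponential contraction estimate for continued fractions that turns this forcing into the statement that the $W_{n,m}$ are nested neighborhoods of $\nu$ with diameters tending to zero.
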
 

\begin{proof} This follows directly from Lemma~\ref{l:accum-gaps1} and the proof of Corollary~\ref{c:Frei-gap4}. 
\end{proof}

\begin{lemma}\label{l:accum-gaps3} For each $n,m\in\mathbb{N}$, one has 
$$W_{n,m}\subset A_n+B_m$$ 
where 
$$A_n=\{[4;3,1,3,1,2,1,1,3,3,3,1,3,1,2,1+g^{n-1}(x)]: x\in K_1\},$$ 
$$B_m=\{[0;3,1,3,1,3,4,4,4,3,2,3+h^{m-1}(y)]: y\in K_2\},$$ 
the maps $g$, $h$ are $g(x)=[0;3,1,3,1,2,1+x]$, $h(y)=[0;4,4,4,3,2,3+y]$, and $K_1=\{[0;3,1,3,1,2,1,\widetilde{\underline{\theta}}]\in K\}$, $K_2=\{[0;4,4,4,3,2,3,\widehat{\underline{\theta}}]\in K\}$ with 
\begin{eqnarray*} 
K&=&\{[0;\underline{\theta}]:\underline{\theta}\in\{1,2,3,4\}^{\mathbb{N}} \textrm{ doesn't contain the strings } 14, 24, 433, 434, \\ & &131313, 2343, 223444, 123444 \textrm{ or their transposes}\}.
\end{eqnarray*}
\end{lemma}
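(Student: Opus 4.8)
The plan is to unwind the definitions and show that each $m(\underline{a})\in W_{n,m}$ decomposes as a sum of a ``left'' continued fraction contributing to $A_n$ and a ``right'' continued fraction contributing to $B_m$. Recall that any element of $W_{n,m}$ has the form $m(\underline{a})=\lambda_0(\underline{a})$ with
\[
\underline{a}=\underline{\theta}^t\,323444313134^*313121133313121\,\underline{\theta}',
\]
where $\underline{\theta}=\underbrace{444323\cdots444323}_{n}\widehat{\underline{\theta}}$ and $\underline{\theta}'=\underbrace{313121\cdots313121}_{m}\widetilde{\underline{\theta}}$. First I would argue that, for such $\underline{a}$, the Markov value is actually attained at the marked position, i.e. $m(\underline{a})=\lambda_0(\underline{a})$ — this is built into the definition of $W_{n,m}$, so no work is needed, but I would record that $\lambda_0(\underline{a})=[a_0;a_{-1},a_{-2},\dots]+[0;a_1,a_2,\dots]$ with $a_0=4$. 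Reading off the two one-sided sequences: to the right of the asterisk one has the string $313121133313121$ followed by $\underline{\theta}'$, and to the left (reversed) one has $43131344\,323\cdots$, i.e. the reversal of $323444313134$ followed by the reversal of $\underline{\theta}$.

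Next I would match these two continued fractions against the sets $A_n$ and $B_m$. For the left/``$A$'' part: reversing $44\,323444313134$ gives the initial block $4,3,1,3,1,3,4,4,4,3,2,3,\dots$ — wait, I need the \emph{whole} left tail $[a_0;a_{-1},a_{-2},\dots]$, which begins $[4;3,1,3,1,3,\dots]$ coming from $\dots 34^*3131\dots$ read leftwards, then $3,4,4,4$ from $\dots 444\dots$, i.e. $[4;3,1,3,1,3,4,4,4,\dots]$; but that is the expression appearing in $B_m$, not $A_n$. The correct bookkeeping is: the block $323444313134^*313121133313121$ is a \emph{palindrome-like} fixed core, and the self-similar growth happens by appending copies of $444323$ on the left (inside $\underline{\theta}$) and copies of $313121$ on the right (inside $\underline{\theta}'$). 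So the right tail $[0;a_1,a_2,\dots]=[0;3,1,3,1,2,1,1,3,3,3,1,3,1,2,1,\dots]$ where the continuation is governed by $\underline{\theta}'=(313121)^m\widetilde{\underline\theta}$; peeling off the first $m-1$ copies of $313121$ via the map $g(x)=[0;3,1,3,1,2,1+x]$ and absorbing the last copy into $K_1=\{[0;3,1,3,1,2,1,\widetilde{\underline\theta}]\in K\}$ yields exactly
\[
[0;a_1,a_2,\dots]=[0;3,1,3,1,2,1,1,3,3,3,1,3,1,2,1+g^{n-1}(x)]\in A_n,\quad x\in K_1.
\]
Hold on — the indices $n$ and $m$ must be tracked carefully; I would check against Corollary~\ref{c:Frei-gap4}, where the maximal extension is $\overline{432344}$ on one side and $\overline{121313}$ on the other, to confirm which periodic block attaches to which side and hence whether it is $g^{n-1}$ or $g^{m-1}$ that appears. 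Granting that, the symmetric computation on the left tail $[a_0;a_{-1},\dots]=[4;3,1,3,1,3,4,4,4,3,2,3,\dots]$, with $\underline{\theta}=(444323)^n\widehat{\underline\theta}$, peeling $m-1$ copies via $h(y)=[0;4,4,4,3,2,3+y]$ and absorbing the last into $K_2=\{[0;4,4,4,3,2,3,\widehat{\underline\theta}]\in K\}$, gives $[a_0;a_{-1},\dots]\in B_m$. Summing, $m(\underline{a})=\lambda_0(\underline{a})\in A_n+B_m$.

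The remaining point is to verify that $\widehat{\underline\theta}$ and $\widetilde{\underline\theta}$ may indeed be taken in the symbolic Cantor set $K$, i.e. that they avoid the forbidden strings $14,24,433,434,131313,2343,223444,123444$ and transposes. This is where the real content lies, and it is the step I expect to be the main obstacle: it is \emph{not} part of the definition of $W_{n,m}$ but must be \emph{deduced} from the constraint $m(\underline{a})<\mu$ together with Lemma~\ref{l:accum-gaps1} and the forbidden-word lemmas of Section~\ref{s:Frei-gap}. Concretely, $14,24$ and their transposes are forbidden by Lemma~\ref{l:Frei-gap2+3}; $433,434$ (equivalently $334$ and $434$, up to transposition) by Lemma~\ref{l:Frei-gap14+15}; the longer strings $131313$ (from $313133$-type constraints), $2343$, $223444$, $123444$ by Lemmas~\ref{l:Frei-gap4+5},~\ref{l:Frei-gap6-9} and~\ref{l:Frei-gap14+15}, after checking that any occurrence of one of these patterns inside $\widehat{\underline\theta}$ or $\widetilde{\underline\theta}$ would force a subword already excluded for sequences with Markov value below $\mu$ (in the spirit of the ``never extendible, hence forbidden'' arguments used repeatedly in Section~\ref{s:M-L} and in the proof of Corollary~\ref{c:Frei-gap4}). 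I would carry this out pattern by pattern, each time exhibiting the contradiction with one of the cited lemmas; once all eight patterns (and transposes) are handled, the containments $K_1,K_2\subset K$ and hence $W_{n,m}\subset A_n+B_m$ follow, completing the proof. $\qed$
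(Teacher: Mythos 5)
Your proposal follows essentially the same route as the paper, whose proof is a one-line citation of Lemma~\ref{l:accum-gaps2} together with the forbidden-word Lemmas~\ref{l:Frei-gap2+3}, \ref{l:Frei-gap4+5}, \ref{l:Frei-gap6-9} and~\ref{l:Frei-gap14+15}: split $\lambda_0(\underline{a})$ into its two one-sided tails, recognise the periodic blocks $(313121)$ and $(444323)$ as iterates of $g$ and $h$, and use the forbidden strings to place $\widetilde{\underline{\theta}}$ and $\widehat{\underline{\theta}}$ in $K$. The only slips are notational and harmless: the integer part $a_0=4$ must be attached to the \emph{right} tail, so the memberships read $[4;a_1,a_2,\dots]\in A_\bullet$ and $[0;a_{-1},a_{-2},\dots]\in B_\bullet$ rather than as you displayed them, and the $n$/$m$ labelling you flag is indeed swapped relative to the lemma's statement (the right tail carries the $m$ copies of $313121$, the left tail the $n$ copies of $444323$), but neither issue changes the sum nor the lemma's subsequent use, which only requires the inclusion for infinitely many pairs of indices.
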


\begin{proof} This is an immediate consequence of Lemma~\ref{l:accum-gaps2}, and the fact that Lemmas~\ref{l:Frei-gap2+3},~\ref{l:Frei-gap4+5},~\ref{l:Frei-gap6-9},~\ref{l:Frei-gap14+15} ensure that $\underline{a}\in\{1,2,3,4\}^{\mathbb{N}}$ with $m(\underline{a})<\mu$ can't contain the strings $14$, $24$, $433$, $434$, $131313, 2343, 223444, 123444$ or their transposes. 
\end{proof}

In view of Lemma~\ref{l:accum-gaps3}, our task is reduced to find gaps in the arithmetic sums $A_n+B_m$ for infinitely many pairs of indices $n$, $m$. In this direction, we observe that $K_1$ and $K_2$ are dynamical Cantor sets which are invariant under the contractions 
$$g(x)= [0;3,1,3,1,2,1+x] \quad \textrm{ and } \quad h(y)=[0;4,4,4,3,2,3+y]$$ 
whose fixed points are 
$$\alpha=[0;\overline{313121}] \quad \textrm{ and } \quad \beta = [0;\overline{444323}].$$ 
For subsequent reference, we note that $g$ and $h$ can be rewritten as 
$$g(x)= \frac{14x+19}{53x+72}, \quad h(y)=\frac{127x+436}{538x+1847}.$$
In particular, 
$$g'(x)=\frac{1}{(53x+72)^2}, \quad \quad |h'(y)| = \frac{1}{(538x+1847)^2}$$ 
and 
$$\alpha =\frac{2\sqrt{462}-29}{53}, \quad \quad \beta = \frac{\sqrt{243542}-430}{269}.$$

\begin{lemma}\label{l:accum-gaps4} One has $\alpha=\min K_1$, $\beta=\min K_2$, and 
$$\frac{\log|g'(\alpha)|}{\log|h'(\beta)|}\in\mathbb{R}\setminus\mathbb{Q}.$$
\end{lemma}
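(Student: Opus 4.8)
The plan is to prove the three assertions of Lemma~\ref{l:accum-gaps4} in turn. First I would verify that $\alpha=\min K_1$ and $\beta=\min K_2$. Recall $K_1=\{[0;3,1,3,1,2,1,\widetilde{\underline\theta}]\in K\}$ and $\alpha=[0;\overline{313121}]$; writing any element of $K_1$ as $[0;3,1,3,1,2,1+z]$ with $z=[0;\widetilde{\underline\theta}]$ ranging over a subset of $[0,1)$, the value is $g(z)$ and since $g$ is an increasing M\"obius map (note $g'(x)=(53x+72)^{-2}>0$), the minimum over admissible $z$ is attained at the smallest admissible $z$; one checks using the list of forbidden strings defining $K$ that $\widetilde{\underline\theta}=\overline{313121}$ is admissible as a continuation and is the lexicographically smallest admissible continuation (no forbidden string is created), giving $\min K_1 = g(\alpha)=\alpha$ by the fixed-point property. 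The argument for $\beta=\min K_2$ is identical, using that $h$ is monotone and $\beta=h(\beta)$, with the caveat about the sign: one should double-check that $h$ is orientation-preserving or -reversing on the relevant interval (as written $|h'(y)|=(538x+1847)^{-2}$, and $h$ is in fact increasing), so again the minimum is attained at the smallest admissible continuation $\widehat{\underline\theta}=\overline{444323}$, yielding $\min K_2 = h(\beta)=\beta$.

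Next, and this is the heart of the lemma, I would show $\dfrac{\log|g'(\alpha)|}{\log|h'(\beta)|}\notin\mathbb{Q}$. The key step is to compute these derivatives at the fixed points explicitly. Since $\alpha$ is the attracting fixed point of the M\"obius map $g(x)=\frac{14x+19}{53x+72}$, the multiplier $g'(\alpha)$ equals $\rho_g^{-2}$ where $\rho_g$ is the larger eigenvalue of the matrix $\begin{pmatrix}14&19\\53&72\end{pmatrix}$ (or equivalently, $g'(\alpha)=(53\alpha+72)^{-2}$ and $53\alpha+72$ is that eigenvalue); similarly $|h'(\beta)|=(538\beta+1847)^{-2}$ with $538\beta+1847$ the larger eigenvalue of $\begin{pmatrix}127&436\\538&1847\end{pmatrix}$. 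Using $\alpha=\frac{2\sqrt{462}-29}{53}$ and $\beta=\frac{\sqrt{243542}-430}{269}$, I get $53\alpha+72 = 2\sqrt{462}+43$ and $538\beta+1847 = 2\sqrt{243542}+987$ (after simplification), so that
\[
\frac{\log|g'(\alpha)|}{\log|h'(\beta)|}=\frac{\log(2\sqrt{462}+43)}{\log(2\sqrt{243542}+987)}.
\]
To see this ratio is irrational, suppose it equals $p/q$ with $p,q\in\mathbb{N}$; then $(2\sqrt{462}+43)^q=(2\sqrt{243542}+987)^p$. Both sides are algebraic integers in a quadratic field; the left side lies in $\mathbb{Q}(\sqrt{462})$ and the right in $\mathbb{Q}(\sqrt{243542})$, and since $243542 = 2\cdot 13\cdot 17\cdot 19\cdot 29$ while $462 = 2\cdot 3\cdot 7\cdot 11$ have distinct squarefree parts, these fields are distinct, so the common value would have to be a rational integer $N$. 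But $2\sqrt{462}+43$ is a unit of norm $43^2 - 4\cdot 462 = 1849-1848 = 1$ in $\mathbb{Z}[\sqrt{462}]$ (and $>1$), so $(2\sqrt{462}+43)^q$ is a non-rational unit for every $q\ge 1$, contradiction; alternatively one observes $(2\sqrt{462}+43)^q + (2\sqrt{462}-43)^{\pm q}$ analysis forces a conjugation contradiction since the conjugate $43-2\sqrt{462}$ has absolute value $\ne 1$... in fact its absolute value \emph{is} $1$, so the cleanest route is the distinct-fields-plus-irrationality argument: $(2\sqrt{462}+43)^q$ is irrational for all $q\ge 1$ because its Galois conjugate $43-2\sqrt{462}\ne 43+2\sqrt{462}$.

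The main obstacle I anticipate is the last step: turning the numerical fact "$\log(2\sqrt{462}+43)/\log(2\sqrt{243542}+987)$ is irrational" into a clean proof. The subtlety is that irrationality of a ratio of logarithms of algebraic numbers is, in general, a transcendence-theory question (Baker's theorem territory), but here it is elementary because one only needs to rule out \emph{rational} ratios: a rational ratio forces an equation $u^q = v^p$ between two explicit quadratic units lying in distinct quadratic fields, which is impossible since $u^q$ is an irrational element of $\mathbb{Q}(\sqrt{462})\setminus\mathbb{Q}$ (its nontrivial Galois conjugate differs from it) while $v^p\in\mathbb{Q}(\sqrt{243542})$, and $\mathbb{Q}(\sqrt{462})\cap\mathbb{Q}(\sqrt{243542})=\mathbb{Q}$ because $462$ and $243542$ have different squarefree kernels. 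I would present exactly this finite algebraic-number-theory argument, with the one-line computations of $53\alpha+72$, $538\beta+1847$, and the squarefree factorizations spelled out, and note that the minimality claims follow from monotonicity of $g,h$ together with the combinatorics of the forbidden-string list defining $K$.
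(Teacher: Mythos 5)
Your proposal is correct and follows essentially the same route as the paper: compute $g'(\alpha)=(43+2\sqrt{462})^{-2}$ and $h'(\beta)=(987+2\sqrt{243542})^{-2}$, note that $462$ and $243542$ are distinct squarefree integers so the quadratic fields they generate meet only in $\mathbb{Q}$, and conclude that no power identity $g'(\alpha)^m=h'(\beta)^n$ can hold (your extra observation that $(43+2\sqrt{462})^q$ is visibly irrational for all $q\geq 1$ is exactly the detail the paper leaves implicit), while the minimality claims follow from monotonicity of $g,h$ and the forbidden-string combinatorics, just as in the paper. Two harmless slips: the value of $[0;\widetilde{\underline{\theta}}]$ is minimized in the \emph{alternating} lexicographic order rather than the plain one, and in your discarded aside the conjugate $43-2\sqrt{462}$ has absolute value $(43+2\sqrt{462})^{-1}\neq 1$ (it is the \emph{norm} that equals $1$), but neither affects the argument you actually use.
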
 

\begin{proof} The fact that $\alpha=\min K_1$, $\beta=\min K_2$ follows from the definition of $K_1$, $K_2$ and the constraint on the continued fraction expansions of the elements of $K$. Furthermore, a straightforward computation yields 
$$g'(\alpha)= \frac{1}{(43+2\sqrt{462})^2} \quad \textrm{ and } \quad h'(\beta) = \frac{1}{(987+2\sqrt{243542})^2}.$$ 
Since $462=2\cdot 3\cdot 7\cdot 11$ and $243542=2\cdot 13\cdot 17\cdot 19\cdot 29$, their square roots generate distinct quadratic extensions of $\mathbb{Q}$ and 
$$g'(\alpha)^m=\frac{1}{(43+2\sqrt{462})^{2m}}\neq \frac{1}{(987+2\sqrt{243542})^{2n}}=h'(\beta)^n$$ 
for all $n, m\in\mathbb{N}^*$. Hence, $\frac{\log|g'(\alpha)|}{\log|h'(\beta)|}\in\mathbb{R}\setminus\mathbb{Q}$. This ends the proof of the lemma. 
\end{proof}

Also for later use, let us recall the following bound on the distortion of certain inverse branches of the Gauss map: 
\begin{lemma}\label{l:accum-gaps5} Let $f(x)=[0;a_1,\dots,a_k+x]$ be the inverse branch of the Gauss map associated to a finite word $(a_1,\dots,a_k)\in\{1,2,3,4\}^k$, $k\geq 1$. Then, 
$$\frac{1}{2.3}<\frac{|f'(x)|}{|f'(y)|}< 2.3$$ 
for any $\frac{\sqrt{2}-1}{2}\leq x,y\leq 2\sqrt{2}-2$.  
\end{lemma}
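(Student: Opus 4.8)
The plan is to bound the ratio $|f'(x)|/|f'(y)|$ uniformly over the allowed range of $x,y$ by exploiting the chain-rule structure of inverse branches of the Gauss map. Writing $f(x) = [0;a_1,\dots,a_k+x] = g_{a_1}\circ g_{a_2}\circ\cdots\circ g_{a_k}(x)$, where $g_a(t) = 1/(a+t)$, we have by the chain rule $f'(x) = \prod_{j=1}^{k} g_{a_j}'(t_j)$ with $t_j = g_{a_{j+1}}\circ\cdots\circ g_{a_k}(x)$ (and $t_k = x$), and similarly for $y$ with points $s_j$. Since $g_a'(t) = -1/(a+t)^2$, the ratio telescopes to
\[\frac{|f'(x)|}{|f'(y)|} = \prod_{j=1}^{k}\left(\frac{a_j+s_j}{a_j+t_j}\right)^{2}.\]
The key point is that each inner iterate $t_j$ (resp. $s_j$) is itself of the form $[0;a_{j+1},\dots,a_k+x]$ with all digits in $\{1,2,3,4\}$, so it lies in a controlled interval; in particular if $x,y\in[\frac{\sqrt2-1}{2}, 2\sqrt2-2]$ then a short induction shows $t_j, s_j$ stay in this same interval (the map $t\mapsto 1/(a+t)$ for $a\in\{1,2,3,4\}$ sends $[\frac{\sqrt2-1}{2}, 2\sqrt2-2]$ into itself — this is exactly why those endpoints were chosen, as $2\sqrt2-2 = [0;\overline{4,4,\dots}]$-type and $\frac{\sqrt2-1}{2} = [0;4,\overline{4}]$-adjacent fixed point data).

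First I would pin down the invariant interval: verify that $\phi_a(t) := 1/(a+t)$ maps $I := [\frac{\sqrt2-1}{2}, 2\sqrt2-2]$ into $I$ for each $a\in\{1,2,3,4\}$, which reduces to checking the four pairs of endpoint inequalities $\frac{\sqrt2-1}{2}\le \frac{1}{a + (2\sqrt2-2)}$ and $\frac{1}{a+\frac{\sqrt2-1}{2}} \le 2\sqrt2-2$; since $\phi_a$ is decreasing it suffices to check the two extreme cases $a=4$ and $a=1$. This gives $t_j,s_j\in I$ for all $j$. Next, for each fixed $j$, since $t_j$ and $s_j$ are obtained by applying the same composition of contractions $\phi_{a_{j+1}}\circ\cdots\circ\phi_{a_k}$ to $x$ and $y$ respectively, and each $\phi_a$ is a contraction on $I$ with $|\phi_a'| \le \frac{1}{(1+\frac{\sqrt2-1}{2})^2} = \frac{4}{(1+\sqrt2)^2} < 1$, I get $|t_j - s_j| \le c^{\,k-j}|x-y|$ for some $c<1$, and in particular $|t_j - s_j| \le |x - y| \le 2\sqrt2-2 - \frac{\sqrt2-1}{2} =: \ell$.

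Then I would estimate each factor: $\dfrac{a_j+s_j}{a_j+t_j} = 1 + \dfrac{s_j - t_j}{a_j+t_j} \le 1 + \dfrac{|s_j-t_j|}{1 + \frac{\sqrt2-1}{2}}$, and crucially, using the geometric decay $|s_j - t_j| \le c^{k-j}\ell$, the product $\prod_j \left(1 + \frac{c^{k-j}\ell}{1+\frac{\sqrt2-1}{2}}\right)^2$ is bounded by $\exp\!\left(\frac{2\ell}{(1+\frac{\sqrt2-1}{2})(1-c)}\right)$ via $1+u \le e^u$ and summing the geometric series $\sum_{i\ge 0} c^i = \frac{1}{1-c}$, uniformly in $k$. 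A numerical check that this constant is below $2.3$ (and its reciprocal above $1/2.3$, by the symmetric lower bound $\frac{a_j+s_j}{a_j+t_j} \ge 1 - \frac{c^{k-j}\ell}{1+\frac{\sqrt2-1}{2}}$ together with $1-u \ge e^{-u/(1-u)}$ or simply $\prod(1-u_i)\ge 1 - \sum u_i$ when $\sum u_i$ is small, or more cleanly by swapping the roles of $x$ and $y$ to get the same upper bound $2.3$ on the reciprocal) completes the proof. The main obstacle is purely the arithmetic of making the explicit constant land below $2.3$: one must be slightly careful that the crude bound $|s_j-t_j|\le \ell$ for every $j$ is far too lossy (it would give $\exp(\text{something}\times k)$, useless), so the geometric-decay refinement via the contraction factor $c$ of the inverse branches is essential — this is the step where the specific numerical window $[\frac{\sqrt2-1}{2}, 2\sqrt2-2]$ does real work, since it is small enough that both $\ell$ and $c$ are comfortably controlled.
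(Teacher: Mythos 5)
Your overall strategy (chain rule, invariant interval, geometric decay of $|t_j-s_j|$, then $1+u\le e^u$) proves \emph{a} uniform distortion bound, but it cannot deliver the constant $2.3$, and the deferred ``numerical check'' at the end is exactly where the argument breaks. With your own quantities one gets $\ell=\frac{3(\sqrt2-1)}{2}\approx 0.621$, worst-case contraction $c=\frac{4}{(1+\sqrt2)^2}\approx 0.686$, and hence a bound of
$\exp\bigl(\tfrac{2\ell}{(1+\frac{\sqrt2-1}{2})(1-c)}\bigr)=\exp(3.28\ldots)\approx 26.6$,
an order of magnitude too large. There is essentially no room to tighten this term by term: the single factor $j=k$ alone can equal $\bigl(\tfrac{1+(2\sqrt2-2)}{1+\frac{\sqrt2-1}{2}}\bigr)^2=\bigl(\tfrac{2(2\sqrt2-1)}{1+\sqrt2}\bigr)^2\approx 2.2946$, which is already within $0.006$ of $2.3$; since your bound multiplies this by further factors each estimated by something $\ge 1$, it must exceed $2.3$ for every $k\ge 2$. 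The loss comes from ignoring that the inverse branches are orientation-reversing, so the signs of $s_j-t_j$ alternate and the factors $>1$ and $<1$ largely cancel; bounding each factor by its individual worst case discards this cancellation entirely. The constant is not cosmetic: at the end of Section~5 the lemma is combined with $|L_0|/|V_0|<1/100$, $|R_0|/|U_0|<1$ and the choice $c=5$ to get $\frac{2}{c}\cdot 2.3<1$ and $2c\cdot\frac{2.3}{100}<1$; replacing $2.3$ by $26.6$ makes these two requirements ($c>53.2$ and $c<1.88$) incompatible.

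The fix is to avoid the product altogether: $f$ is a M\"obius map, $f(z)=\frac{p_{k-1}z+p_k}{q_{k-1}z+q_k}$ with $|f'(z)|=\frac{1}{(q_{k-1}z+q_k)^2}$, so the ratio collapses \emph{exactly} to the single factor
$\frac{|f'(x)|}{|f'(y)|}=\Bigl(\frac{(q_{k-1}/q_k)\,y+1}{(q_{k-1}/q_k)\,x+1}\Bigr)^2$.
Since $0\le q_{k-1}/q_k\le 1$ (and $\ge 1/5$ for digits in $\{1,2,3,4\}$) and the map $r\mapsto\frac{ry+1}{rx+1}$ is monotone in $r$, the extreme value is $\bigl(\tfrac{2(2\sqrt2-1)}{1+\sqrt2}\bigr)^2\approx 2.2946<2.3$, with the reciprocal bound by symmetry. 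This is the paper's one-line computation; your decomposition is the standard bounded-distortion route but is strictly lossier here.
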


\begin{proof} Since $f(z)=\frac{p_{k-1}z+p_k}{q_{k-1} z+ q_k}$ and $|f'(z)|=\frac{1}{(q_{k-1} z+q_k)^2}$, where $\frac{p_j}{q_j}=[0;a_1,\dots,a_j]$ for all $1\leq j\leq k$, we have 
$$\frac{1}{2.3}< \left(\frac{1+\sqrt{2}}{2(2\sqrt{2}-1)}\right)^2 \leq \frac{|f'(x)|}{|f'(y)|} = \left(\frac{\frac{q_{k-1}}{q_k}y+1}{\frac{q_{k-1}}{q_k}x+1}\right)^2\leq \left(\frac{2(2\sqrt{2}-1)}{1+\sqrt{2}}\right)^2< 2.3$$ 
for $\frac{\sqrt{2}-1}{2}\leq x,y\leq 2\sqrt{2}-2$ (as $1/5\leq q_{k-1}/q_k\leq 1$). 
\end{proof}

An interesting consequence of this lemma is the fact that the sets $A_n$ and $B_m$ (cf. Lemma~\ref{l:accum-gaps3}) are mildly distorted ``copies'' of $K_1$ and $K_2$. For this reason, the next lemma about the ``thickness'' of $K_1$ and $K_2$ at their minima will be useful later. 

\begin{lemma}\label{l:accum-gaps6} Consider the intervals $R_0=[\alpha,\alpha_1]$, $U_0=(\alpha_1,\alpha_2)$, $L_0=[\beta,\beta_1]$ and $V_0=(\beta_1,\beta_2)$, where 
\begin{itemize}
\item $\alpha_1$ is the largest element of $K_1$ of the form $[0;3,1,3,1,2,1,3,\widetilde{\underline{\theta}}]$, 
\item $\alpha_2$ is the smallest element of $K_1$ of the form $[0;3,1,3,1,2,1,2,\widetilde{\underline{\theta}}]$, 
\item $\beta_1$ is the largest element of $K_2$ of the form $[0;4,4,4,3,2,3,4,\widehat{\underline{\theta}}]$,
\item $\beta_2$ is the smallest element of $K_2$ of the form $[0;4,4,4,3,2,3,3,\widehat{\underline{\theta}}]$.
\end{itemize}
Then, 
$$\frac{|R_0|}{|U_0|}<1 \quad \textrm{and} \quad \frac{|L_0|}{|V_0|}<\frac{1}{100}.$$
\end{lemma}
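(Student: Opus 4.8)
The plan is to estimate all four endpoints $\alpha_1,\alpha_2,\beta_1,\beta_2$ explicitly, or rather to estimate the ratios $|R_0|/|U_0|$ and $|L_0|/|V_0|$, using the fact that the relevant intervals are images of the whole Cantor set (or large pieces of it) under explicit inverse branches of the Gauss map, together with the distortion control of Lemma~\ref{l:accum-gaps5}. Concretely, $R_0$, $U_0$, $L_0$, $V_0$ are the ``gaps'' and adjacent cylinder pieces that appear when one refines the cylinder of $K_1$ (resp.\ $K_2$) beginning $313121$ (resp.\ $444323$) by one more digit. So I would first pin down exactly which digit-strings contribute: after the prefix $313121$ in $K_1$ the next digit can a priori be $1,2,3,4$, but $4$ is forbidden (the string $14$ and its transpose are forbidden in $K$), $131313$-type and $2343$-type constraints must be checked; so the admissible continuations are governed by the combinatorics of $K$, and $\alpha_1$ corresponds to the supremum of the sub-Cantor-set with next digit $3$, while $\alpha_2$ is the infimum of the one with next digit $2$ (note the orientation-reversal of even-index inverse branches of the Gauss map, which is why $\alpha_1<\alpha_2$ with digit $3$ below digit $2$). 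The interval $U_0=(\alpha_1,\alpha_2)$ is precisely the gap between these two cylinders.

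Second, I would write each of $\alpha,\alpha_1,\alpha_2$ (and similarly $\beta,\beta_1,\beta_2$) as $f(\xi)$ for $\xi$ ranging over an appropriate sub-piece of $K$, where $f$ is the inverse branch of the Gauss map attached to the prefix ($313121$ plus possibly one extra digit). Since $|f'|$ varies by at most a factor $2.3$ over the range $[\tfrac{\sqrt2-1}{2},2\sqrt2-2]$ (Lemma~\ref{l:accum-gaps5}) — and all elements of $K$ lie in that range, as the smallest admissible continued fraction with digits in $\{1,2,3,4\}$ is at least $[0;4,1,1,1,\dots]$-ish and the largest at most $[0;1,4,4,\dots]$-ish, which I would verify sits inside $[\tfrac{\sqrt2-1}{2},2\sqrt2-2]$ — the lengths $|R_0|$, $|U_0|$, $|L_0|$, $|V_0|$ are, up to a multiplicative factor between $1/2.3$ and $2.3$, equal to $|f'(\text{pt})|$ times the ``length in parameter space'' of the corresponding digit-cylinder. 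Taking the ratio $|R_0|/|U_0|$, the derivative factors $|f'|$ attached to the common prefix $313121$ cancel up to a bounded multiplicative error, and one is left with a ratio of elementary quantities: the diameter of the piece of $K$ with next digit $3$, versus the size of the gap between the digit-$3$ piece and the digit-$2$ piece. These are computed from explicit continued fractions of bounded length (the extremal admissible words), so the bound $|R_0|/|U_0|<1$ becomes a finite arithmetic verification; similarly $|L_0|/|V_0|<1/100$ follows because the prefix $444323$ has much larger denominator, making the digit-$3$ cylinder (which is $L_0$) tiny compared to the gap $V_0$ — here the relevant comparison is between $1/(q_{k-1}\cdot 3 + q_k)$-type quantities with the large partial quotients $4,4,4$.

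Thus the key steps, in order: (1) determine, from the forbidden-string list defining $K$, the admissible one-digit extensions of $313121$ in $K_1$ and of $444323$ in $K_2$, and identify $\alpha_1,\alpha_2,\beta_1,\beta_2$ as sup/inf of the resulting sub-pieces; (2) express these endpoints as inverse-branch images $f(\xi)$ and record the relevant convergents $p_{k-1}/q_{k-1},p_k/q_k$ of the prefix words; (3) invoke Lemma~\ref{l:accum-gaps5} to replace $|R_0|$, $|U_0|$ (resp.\ $|L_0|$, $|V_0|$) by $|f'|\cdot(\text{parameter lengths})$ up to factor $2.3$, check all points stay in $[\tfrac{\sqrt2-1}{2},2\sqrt2-2]$; (4) form the ratios, cancel the common prefix derivative, and reduce to an explicit estimate on the extremal admissible continued fractions, then do the arithmetic to get $<1$ and $<1/100$.

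The main obstacle I expect is step (1) together with the bookkeeping in step (4): one must be careful about the orientation of even-vs-odd inverse branches of the Gauss map (so that ``largest element with next digit $3$'' really does sit below ``smallest with next digit $2$''), and one must correctly identify the \emph{extremal} admissible continuations — e.g.\ the largest element of $K_1$ of the form $[0;3,1,3,1,2,1,3,\widetilde{\underline\theta}]$ requires knowing which digit can legally follow $3131213$ in $K$ and maximizing from there, which in turn depends on the longer forbidden strings like $131313$, $2343$, $223444$. The numerical margins ($<1$ and especially $<1/100$) are comfortable enough that the crude factor-$2.3$ distortion bound should suffice, so no delicate estimate is needed once the combinatorics of admissible words is correctly sorted out; the only real care is in not mis-identifying an endpoint.
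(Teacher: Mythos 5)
Your combinatorial step (1) is exactly right and matches the paper: using the forbidden strings one bounds the extremal continuations by explicit eventually periodic continued fractions, namely $\alpha_1\le[0;3,1,3,1,2,1,\overline{3,4}]$, $\alpha_2\ge[0;3,1,3,1,2,1,2,\overline{1,3}]$, $\beta_1\le[0;4,4,4,3,2,3,4,\overline{4,3}]$ and $\beta_2\ge[0;4,4,4,3,2,3,\overline{3,1}]$. Since these are quadratic irrationals (and $\alpha$, $\beta$ are given in closed form), the paper simply evaluates the ratios of actual differences, obtaining $\frac{\alpha_1-\alpha}{\alpha_2-\alpha_1}<0.98479$ and $\frac{\beta_1-\beta}{\beta_2-\beta_1}<0.008565$, with no distortion argument whatsoever.

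The genuine gap is in your steps (3)--(4). You propose to pull $R_0,U_0$ (resp.\ $L_0,V_0$) back through the common inverse branch and pay the factor $2.3$ of Lemma~\ref{l:accum-gaps5}, asserting that the numerical margins are ``comfortable enough'' to absorb it. They are not. The true value of $|R_0|/|U_0|$ is about $0.985$, i.e.\ only about $1.5\%$ below $1$, so any reduction that loses a multiplicative factor of $2.3$ (indeed anything more than roughly $1.015$) cannot establish $|R_0|/|U_0|<1$: the ratio of the pulled-back parameter intervals is about $0.93$, and $0.93\times 2.3>2$. The second bound fails too, though more narrowly: the pulled-back ratio is about $0.0085$ and $0.0085\times 2.3\approx 0.0196>1/100$. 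The crude distortion bound of Lemma~\ref{l:accum-gaps5} is reserved in the paper for the later step (controlling the long branches $f_0\circ g^n$ and $f_1\circ h^m$), where the constant $c$ can be chosen to leave room for it; for the present lemma you must avoid distortion losses entirely and compute the ratios directly from the explicit endpoint bounds. With that replacement — which keeps your step (1) intact and discards the cancellation-up-to-$2.3$ device — the argument goes through.
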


\begin{proof} Since the strings $41$, $42$ and $2343$ are forbidden in continued fraction expansions in $K$, we have that 
$\beta_1\leq [0;4,4,4,3,2,3,4,\overline{4,3}]$ and $\beta_2\geq [0;4,4,4,3,2,3,\overline{3,1}]$, and  
$$\frac{|L_0|}{|V_0|} = \frac{\beta_1-\beta}{\beta_2-\beta_1} < 0.008565 < \frac{1}{100}.$$ 
Similarly, we have $\alpha_1\leq [0;3,1,3,1,2,1,\overline{3,4}]$ and $\alpha_2\geq [0;3,1,3,1,2,1,2,\overline{1,3}]$, and 
$$\frac{|R_0|}{|U_0|} = \frac{\alpha_1-\alpha}{\alpha_2-\alpha_1} < 0.98479 < 1.$$ 
This completes the argument. 
\end{proof}

At this point, we are ready to complete the proof of Theorem~\ref{t:main2}. In fact, Lemmas~\ref{l:accum-gaps2} and~\ref{l:accum-gaps3} reduce our task to find gaps in $A_n+B_m$ for infinitely many $n, m\in\mathbb{N}^*$. Since 
$A_n=f_0\circ g^n(K_1)$ and $B_m=f_1\circ h^m(K_2)$, where 
$$f_0(x)= [4;3,1,3,1,2,1,1,3,3,3+x] \quad \textrm{ and } \quad f_1(x) = [0;3,1,3,1,3+x],$$  
and Lemma~\ref{l:accum-gaps4} ensures the denseness of $\{|g'(\alpha)|^n/|h'(\beta)|^m: n,m\in\mathbb{N}^*\}$ in $\mathbb{R}_+$, we get\footnote{Actually, using the general distortion bound statement in Chapter 4 of Palis--Takens book, it is possible to show that for any $c\in\mathbb{R}_+$ and $0<\varepsilon<1$, one has $c(1-\varepsilon)<\frac{|R_n|}{|L_m|} < c(1+\varepsilon)$ for infinitely many $n,m\in\mathbb{N}^*$.}, for any $c\in\mathbb{R}_+$, there are infinitely many $n,m\in\mathbb{N}^*$ such that 
$$\frac{c}{2}<\frac{|R_n|}{|L_m|} < 2c,$$ 
where $R_n=f_0\circ g^n(R_0)$ and $L_m=f_1\circ h^m(L_0)$. Because Lemma~\ref{l:accum-gaps5} also says that 
$$\frac{|L_m|}{|V_m|}<\frac{2.3}{100} \quad \textrm{ and } \quad \frac{|R_n|}{|U_n|}<2.3,$$ 
where $U_n=f_0\circ g^n(U_0)$, $V_m=f_1\circ h^m(V_0)$ are gaps of $A_n$ and $B_m$ (as $U_0$ and $V_0$ are gaps of $K_1$ and $K_2$), we conclude that 
$$\frac{|L_m|}{|U_n|} = \frac{|L_m|}{|R_n|}\cdot\frac{|R_n|}{|U_n|}<\frac{2}{c}\cdot 2.3 \quad \textrm{ and } \quad \frac{|R_n|}{|V_m|} = \frac{|R_n|}{|L_m|}\cdot\frac{|L_m|}{|V_m|}< 2c\cdot\frac{2.3}{100}.$$ 
Thus, if we take $c=5$, then 
$$\frac{|L_m|}{|U_n|} < 0.92 < 1 \quad \textrm{ and } \quad \frac{|R_n|}{|V_m|} < 0.23 < 1.$$ 
This ends the proof of Theorem~\ref{t:main2} because the inequalities above imply that $A_n+B_m$ has a gap: indeed, these estimates say that any parameter $t\in\mathbb{R}$ such that $t-U_n$ contains $L_m$ and their right endpoints are sufficiently close also satisfies $t-R_n\subset V_m$ and, \emph{a fortiori}, $(t-A_n)\cap B_m=\varnothing$ (see Figure~\ref{f:gaps}); hence, $A_n+B_m$ misses an entire open interval of parameters.

\begin{figure}
\begin{center}
\includegraphics[scale=1.2]{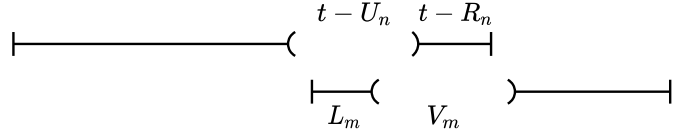}
\end{center}
\caption{Producing gaps in $A_{n}+B_{m}$.}
\label{f:gaps}
\end{figure}


\appendix
\section{Additional elements of $M\setminus L$}\label{app:M-L}

Here we present new elements of $M\setminus L$ that are less than those discussed in Section~\ref{s:M-L}. We only give the definitions of the sequences and Cantor sets involved and leave the proofs to the interested reader. These new sequences were also discovered using the computational search technique discussed in the introduction.

\subsection{Elements of {\boldmath $M\setminus L$} near to 3.676}

Computer investigations lead us to believe that there is a portion of $M\setminus L$ near to 3.676 given by an analysis of the subset of the real line near to
\[m(\overline{3^{*}21112123}) = 3.676699417246755742\ldots .\]

\subsection{Elements of {\boldmath $M\setminus L$} near to 3.726}

Computer investigations lead us to believe that there is a portion of $M\setminus L$ near to 3.726 given by an analysis of the subset of the real line near to
\[m(\overline{3322211121223^{*}}) = 3.726146224233042720\ldots .\]

Computer investigations also lead us to believe that there is a portion of $M\setminus L$ near to 3.726 given by an analysis of the subset of the real line near to
\[m(\overline{33222121223^{*}}) = 3.726278993734881116\ldots .\]

\subsection{Elements of {\boldmath $M\setminus L$} near to 3.942}

Computer investigations lead us to believe that there is a portion of $M\setminus L$ near to 3.942 given by an analysis of the subset of the real line near to
\[m(\overline{33211121232331113^{*}}) = 3.942001159911341469\ldots .\]
Note that this value is higher than the elements near to 3.938 that we rigorously considered in this paper. We chose not to analyse this sequence since, given its length, it would require a more involved analysis of the combinatorics without (in heuristic calculations) giving rise to an appreciable increase in the Hausdorff dimension estimates of $M\setminus L$.


\section{Pseudo-code for computer search}\label{app:alg}

Below is the pseudo-code for the part of the computer search that determines the central portion of sequences $\underline{a}\in\{1,2,3,4\}^{\Z}$ for which
\[m(\underline{a}) = \lambda_{0}(\underline{a}) \in [l,n],\]
for some interval $[l,n]$.

The code can also be used to `confirm' results about gaps in the spectra. For example, when running the code on intervals like $(0,\sqrt{5})$, $(\sqrt{12},\sqrt{13})$ or other known gaps the code terminates and returns an empty list of candidate sequences. On closed intervals, if the endpoints correspond to unique sequences, the code will return a two element list of finite sequences approaching the sequences corresponding to the endpoints.

\begin{algorithm}[H]\label{al:search}
\caption{- Find sequences whose Markov values could lie in the range $[l,n]$}
\begin{algorithmic}
\State $candidates \gets [1^{*},2^{*},3^{*},4^{*}]$
\State $forbidden\_words \gets []$
\State $alphabet \gets \{\_,1,2,3,4\}$ \hfill \emph{\# $\_$ is the empty string}
\State $extensions \gets (alphabet\times alphabet)\setminus\{(\_,\_)\}$
\State $l \gets l$
\State $n \gets n$
\State $length\_limit \gets$ maximum length of sequences to search up to
\State $min\_seq\_len \gets$ minimum length of all sequences in $candidates$
\While{$min\_seq\_len < length\_limit$ \textbf{and} $candidates \neq []$}
	\For{$sequence$ \textbf{in} $candidates$}
		\State $allowable \gets True$
		\State remove $sequence$ from $candidates$
		\For{$(x,y)$ \textbf{in} $extensions$}
			\State $trial\_sequence \gets \text{concatenation}(x,sequence,y)$
			\If{$trial\_sequence$ contains any words from $forbidden\_words$}
				\State \textbf{continue} \hfill \emph{\# the sequence is forbidden so move on to the next}
			\EndIf
			\State $\lambda_{max} \gets$ maximum possible value of $\lambda_{0}(trial\_sequence)$
			\If{$\lambda_{max} < l$}
				\State \textbf{continue} \hfill \emph{\# $\lambda_{0}$ is too small so move on to the next sequence}
			\EndIf
			\For{$z$ \textbf{in} $trial\_sequence$}
				\State $j \gets$ position of $z$ in $trial\_sequence$
				\State $\lambda_{min} \gets$ minimum possible value of $\lambda_{j}(trial\_sequence)$
				\If{$\lambda_{min} > n$}
					\State append $trial\_sequence$ to $forbidden\_words$
					\State $allowable \gets False$ \hfill \emph{\# the Markov value is too large}
				\EndIf
			\EndFor
			\If{$allowable$} \hfill \emph{\# the Markov value can lie in $[l,n]$}
				\State append $trial\_sequence$ to $candidates$
			\EndIf
		\EndFor
	\EndFor
	\If{$candidates \neq []$}
		\State $min\_seq\_len \gets$ minimum length of all sequences in $candidates$
	\EndIf
\EndWhile
\State \Return $candidates$
\end{algorithmic}
\end{algorithm}


\end{document}